\definecolor{mahogany}{cmyk}{0, 0.77, 0.87, 0}
\definecolor{salmon}{cmyk}{0, 0.53, 0.38, 0}
\definecolor{melon}{cmyk}{0, 0.46, 0.50, 0}
\definecolor{yellowgreen}{cmyk}{0.44, 0, 0.74, 0}
\definecolor{brickred}{cmyk}{0, 0.89, 0.94, 0.28}
\definecolor{OliveGreen}{cmyk}{0.64, 0, 0.95, 0.40}
\definecolor{RawSienna}{cmyk}{0, 0.72, 1.0, 0.45}
\definecolor{ZurichRed}{rgb}{1, 0, 0} % Red of svgnames
\newcommand{\CM}{\mathcal{M}}
\newcommand{\cadlag}{c\`adl\`ag}
\newcommand{\calP}{\mathcal{P}}
\newcommand{\trip}{(M, \CM, \mu)}
\newcommand{\CL}{\mathcal{L}}
\newcommand{\p}{{\mathbb{P}}}
\definecolor{rb}{rgb}{0.1,0.2, 0.7}
\newtheorem{thm}{Theorem}[section]
\newtheorem{definition}{Definition}
\begin{document}

%\newtheorem{thm}{Theorem}
%\numberwithin{thm}{Theorem}
\newtheorem{lemma}[thm]{Lemma}
%\newtheorem{corr}[thm]{Corollary}[sections]
%\numberwithin{corollary}[thm]{section}
\newtheorem{proposition}{Proposition}
\newtheorem{theorem}{Theorem}[section]
\newtheorem{deff}[thm]{Definition}
\newtheorem{case}[thm]{Case}
%\numberwithin{deff}{section}
\newtheorem{prop}[thm]{Proposition}
%\numberwithin{equation}{subsection}
%\numberwithin{equation}{section}
\newtheorem{example}{Example}

\newtheorem{corollary}{Corollary}

\theoremstyle{definition}
\newtheorem{remark}{Remark}

\numberwithin{equation}{section}
\numberwithin{definition}{section}
%\numberwithin{problem}{section}
\numberwithin{corollary}{section}
%\numberwithin{proposition}{subsection}

\numberwithin{theorem}{section}

\numberwithin{remark}{section}
\numberwithin{example}{section}
\numberwithin{proposition}{section}

\newcommand{\gap}{\lambda_{2,D}^V-\lambda_{1,D}^V}
\newcommand{\gapR}{\lambda_{2,R}-\lambda_{1,R}}
\newcommand{\bD}{\mathrm{I\! D\!}}
\newcommand{\calD}{\mathcal{D}}
\newcommand{\calA}{\mathcal{A}}

\newcommand{\conjugate}[1]{\overline{#1}}
\newcommand{\abs}[1]{\left| #1 \right|}
\newcommand{\cl}[1]{\overline{#1}}
\newcommand{\expr}[1]{\left( #1 \right)}
\newcommand{\set}[1]{\left\{ #1 \right\}}

\newcommand{\calC}{\mathcal{C}}
\newcommand{\calE}{\mathcal{E}}
\newcommand{\calF}{\mathcal{F}}
\newcommand{\Rd}{\mathbb{R}^d}
\newcommand{\BR}{\mathcal{B}(\Rd)}
\newcommand{\R}{\mathbb{R}}
\newcommand{\T}{\mathbb{T}}
\newcommand{\D}{\mathbb{D}}

\newcommand{\al}{\alpha}
\newcommand{\RR}[1]{\mathbb{#1}}
\newcommand{\bR}{\mathrm{I\! R\!}}
\newcommand{\ga}{\gamma}
\newcommand{\om}{\omega}
\newcommand{\A}{\mathbb{A}}
\newcommand{\bH}{\mathbb{H}}

\newcommand{\bb}[1]{\mathbb{#1}}
\newcommand{\bI}{\bb{I}}
\newcommand{\bN}{\bb{N}}

\newcommand{\uS}{\mathbb{S}}
\newcommand{\M}{{\mathcal{M}}}
\newcommand{\calB}{{\mathcal{B}}}

\newcommand{\W}{{\mathcal{W}}}

\newcommand{\m}{{\mathcal{m}}}

\newcommand {\mac}[1] { \mathbb{#1} }

\newcommand{\bC}{\Bbb C}

\newtheorem{rem}[theorem]{Remark}
\newtheorem{dfn}[theorem]{Definition}
\theoremstyle{definition}
\newtheorem{ex}[theorem]{Example}
\numberwithin{equation}{section}

\newcommand{\Pro}{\mathbb{P}}
\newcommand\F{\mathcal{F}}
\newcommand\E{\mathbb{E}}
\newcommand\e{\varepsilon}
\def\H{\mathcal{H}}
\def\t{\tau}

\title[Weighted inequalities]{Sharp Weighted $L^2$ inequalities for square functions}

\author{Rodrigo Ba\~nuelos}\thanks{R. Ba\~nuelos is supported in part  by NSF Grant
\# 0603701-DMS}
\address{Department of Mathematics, Purdue University, West Lafayette, IN 47907, USA}
\email{banuelos@math.purdue.edu}
\author{Adam Os\c ekowski}\thanks{A. Os\c ekowski is supported in part by the NCN grant DEC-2014/14/E/ST1/00532.}
\address{Department of Mathematics, Informatics and Mechanics, University of Warsaw, Banacha 2, 02-097 Warsaw, Poland}
\email{ados@mimuw.edu.pl}

\subjclass[2010]{Primary: 42B20. Secondary: 46E30.}
\keywords{Square function, dyadic, Bellman function, best constants}

\begin{abstract}
Using Bellman function approach, we present new proofs of weighted $L^2$ inequalities for square functions, with the optimal dependence on the $A_2$ characteristics of the weight and further explicit constants. We study the estimates both in the analytic and probabilistic context, and, as application, obtain related estimates for the classical Lusin and Littlewood-Paley square functions.  
\end{abstract}

\maketitle

\section{Introduction}
Square function inequalities play an important role in both classical and noncommutative probability theory, harmonic analysis, potential theory and many other areas of mathematics.  The purpose of this paper is to establish sharp bounds in the dyadic case, which are closely related to the works of Bollob\'as \cite{Bo}, Davis \cite{D}, John and Nirenberg \cite{JN}, Littlewood \cite{L}, Marcinkiewicz \cite{Ma}, Paley \cite{P}, Slavin and Vasyunin  \cite{SV}, Wang \cite{W} and many others.

Let us start the paper with introducing some background and notation. In what follows, the interval $[0,1]$ will be denoted by $\mathfrak{I}$. Let $(h_n)_{n\geq 0}$ be the Haar system on $\mathfrak{I}$, that is, the family of functions given by
\begin{align*}
 &h_0=\chi_{[0,1]}, &&h_1=\chi_{[0,1/2)}-\chi_{[1/2,1)},\\
&h_2=\chi_{[0,1/4)}-\chi_{[1/4,1/2)},  &&h_3=\chi_{[1/2,3/4)}-\chi_{[3/4,1)},\\
& h_4=\chi_{[0,1/8)}-\chi_{[1/8,1/4)}, &&h_5=\chi_{[1/4,3/8)}-\chi_{[3/8,1/2)},
\end{align*}
and so on. For any dyadic subinterval $I$ of $\mathfrak{I}$ and any integrable function $\varphi:\mathfrak{I}\to \R$, we will write $\langle \varphi \rangle_I$ for the average of $\varphi$ over $I$: that is, $\langle \varphi\rangle_I=\frac{1}{|I|}\int_I \varphi$ (unless stated otherwise, the integration is with respect to Lebesgue measure). Furthermore, for any such $\varphi$ and any nonnegative integer $n$, we will write
$$ \varphi_n=\sum_{k=0}^{2^n-1} \frac{1}{|I_k|}\int_\mathfrak{I}\varphi(s)h_k(s)\mbox{d}s\,h_k$$
for the projection of $\varphi$ on the subspace generated by the first $2^n$ Haar functions ($I_k$ is the support of $h_k$). 
We define the dyadic square function of $\varphi$ by the formula
$$ S(\varphi)(x)=\left(\sum \left|\frac{1}{|I_n|}\int_\mathfrak{I} \varphi(s)h_n(s)\mbox{d}s\right|^2 \right)^{1/2},$$
where the summation runs over all nonnegative integers $n$ such that $x\in I_n$. 

The inequalities comparing the sizes of $\varphi$ and its square function $S(\varphi)$ are of importance in analysis and probability, and have been studied intensively in the literature. A classical result of Paley \cite{P} and Marcinkiewicz \cite{M} states that there are finite absolute constants $c_p$ ($0<p<\infty$) and $C_p$ ($1<p<\infty$), such that for any $\varphi:\mathfrak{I}\to\R$,
\begin{equation}\label{sq1}
  ||\varphi||_{L^p(\mathfrak{I})}\leq c_p||S(\varphi)||_{L^p(\mathfrak{I})}
\end{equation}
and
\begin{equation}\label{sq2}
 ||S(\varphi)||_{L^p(\mathfrak{I})}\leq C_p||\varphi||_{L^p(\mathfrak{I})}.
 \end{equation}
The question about the optimal values of $c_p$ and $C_p$ was studied by Davis \cite{D}. For $0<p<\infty$, let $\nu_p$ denote the smallest positive zero of a confluent hypergeometric function $M_p$ and let $\mu_p$ be the largest positive zero of the parabolic cylinder function of order $p$ (see Abramovitz and Stegun \cite{AS} for details). Using a related estimate for continuous-time martingales and Skorokhod embedding theorems, Davis \cite{D} showed that if $0< p\leq 2$, then the best choice for $c_p$ is $\nu_p$, while for $p\geq 2$, the optimal value of $C_p$ is $\nu_p^{-1}$. See also Wang \cite{W} for the vector-valued analogues of these results.

In recent years, a question about the weighted version of \eqref{sq1} and \eqref{sq2} gathered a lot of interest. In what follows, the  word ``weight'' refers to a locally integrable, positive function  on $\R$, which will usually be denoted by $w$. Given $p\in (1,\infty)$, we say that $w$ belongs to the Muckenhoupt $A_p$ class (or, in short, that $w$ is an $A_p$ weight), if the $A_p$ characteristics $[w]_{A_p}$, given by
$$ [w]_{A_p}:=\sup_I \left(\frac{1}{|I|}\int_I w \right)\left(\frac{1}{|I|}\int_I w^{-1/(p-1)}\right)^{p-1},$$
is finite. 
One can also define the appropriate versions of this condition for $p=1$ and $p=\infty$, by passing above with $p$ to the appropriate limit (see e.g. \cite{Gr}, \cite{Hr}). However, we omit the details, as in this paper we will be mainly concerned with the case $1<p<\infty$. The condition $A_p$ arises naturally in the study of weighted estimates for the Hardy-Littlewood maximal operator, as Muckenhoupt showed in \cite{M}. 

Coming back to square function  estimates, the first weighted bound in this setting is due to Buckley \cite{Bu}, who showed the $L^2$ inequality
\begin{equation}\label{buckley}
 ||S(\varphi)||_{L^2_w(\mathfrak{I})}\leq C[w]_{A_2}^{3/2}||\varphi||_{L^2_w(\mathfrak{I})},
\end{equation}
with $C$ being a universal constant. 
Here, of course, the weighted $L^2$ norm is given by
$$||\varphi||_{L^2_w(\mathfrak{I})}=\left(\int_\mathfrak{I} \varphi^2 w\right)^{1/2}.$$
Can the exponent $3/2$ in \eqref{buckley} be decreased? This question was studied by Hukovic \cite{H} and Hukovic, Treil and Volberg \cite{HTV}. It turns out that the sharp dependence is linear, i.e., the best exponent is $1$. This result was later reproved by Wittwer \cite{Wi} and Petermichl and Pott in \cite{PP} using a different approach. Actually, the latter paper contains also the proof of the reverse inequality
$$ ||\varphi||_{L^2_w(\mathfrak{I})}\leq C[w]_{A_2}^{1/2}||S(\varphi)||_{L^2_w(\mathfrak{I})},$$
in which the exponent $1/2$ is also optimal. A considerable extension of these results was obtained recently by Cruz-Uribe, Martell and P\'erez in \cite{CMP}, who showed the weighted $L^p$ bound 
$$ ||S(\varphi)||_{L^p_w(\mathfrak{I})}\leq C[w]_{A_p}^{\max\{(p-1)^{-1},1/2\}}||\varphi||_{L^p_w(\mathfrak{I})}$$
and proved that the exponent $\max\{(p-1)^{-1},1/2\}$ is the best.

One of our objectives is to give a yet another proof of the weighted $L^2$ estimate for square functions. Our reasoning will rest on the construction of certain special functions which enjoy appropriate majorization and concavity properties.  This type of approach, called the Bellman function technique, originates from the theory of optimal stochastic control and has turned out to be very efficient in various problems in analysis and probability.

One of our main results is the following.

\begin{thm}\label{weights}
Suppose that $w$ is an $A_2$ weight and $\varphi$ is a function belonging to $L^2_w(\mathfrak{I})$. Then we have the estimates
\begin{equation}\label{mainin}
||\varphi||_{L^2_w(\mathfrak{I})}\leq (160[w]_{A_2})^{1/2}||S(\varphi)||_{L^2_w(\mathfrak{I})}
\end{equation}
and
\begin{equation}\label{mainin6}
||S(\varphi)||_{L^2_w(\mathfrak{I})}\leq 8\sqrt{2}[w]_{A_2}||\varphi||_{L^2_w(\mathfrak{I})}.
\end{equation}
Furthermore, 
\begin{equation}\label{mainin2}
 ||S(\varphi)||_{L^2_w(\mathfrak{I})}\leq \inf_{1<r<2}\left(\frac{2r}{2-r}[w]_{A_r}\right)^{1/2}||\varphi||_{L^2_w(\mathfrak{I})}.
\end{equation}
\end{thm}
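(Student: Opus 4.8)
The plan is to prove all three estimates in Theorem \ref{weights} via the Bellman function method, constructing for each inequality a function of several real variables that majorizes the relevant ``size'' quantity and enjoys a concavity property adapted to the dyadic martingale structure of $\varphi$ and of the weight $w$. For the first estimate \eqref{mainin}, I would work with variables $(x,y,u,v)$ where $x$ tracks the running average $\langle\varphi\rangle_I$, $y$ tracks the accumulated square function energy $\sum |\langle\varphi,h_n\rangle/|I_n||^2$ restricted to $I$, and $u,v$ track the averages $\langle w\rangle_I$, $\langle w^{-1}\rangle_I$ (subject to the $A_2$ constraint $1\le uv\le [w]_{A_2}$). The target is to find $B$ with $x^2 u \le B(x,y,u,v) \le C[w]_{A_2}(y u + \text{something})$ and such that $B$ is concave along the splitting $I = I_-\cup I_+$ in the sense that replacing $(x,y,u,v)$ by the convex combination of the two children's data does not decrease $B$; iterating this down the dyadic tree and integrating yields $\int_\mathfrak{I}\varphi^2 w \le C[w]_{A_2}\int_\mathfrak{I} S(\varphi)^2 w$. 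For \eqref{mainin6} the roles of $\varphi^2$ and $S(\varphi)^2$ are interchanged on the two sides, so one seeks a Bellman function dominating $y u$ from above by (a multiple of) $x^2$-type data, again with a tree-concavity property; the quadratic dependence on $[w]_{A_2}$ (rather than linear) is the price for the cleaner, more elementary function.

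The key steps, in order, would be: (i) reduce the weighted inequality to a pointwise estimate on the Bellman function by the standard dyadic induction, checking carefully the boundary/initial conditions as $|I|\to 0$ and at the top interval $\mathfrak{I}$; (ii) write down explicit candidate functions — for the $A_2$ setting these are typically built from expressions like $u^{-1}(x^2 + \text{linear in }y)$ together with correction terms forcing the $A_2$ constraint, e.g. something of the form involving $\min\{v, Q/u\}$ where $Q=[w]_{A_2}$; (iii) verify the size conditions (the two-sided bounds), which are algebraic; (iv) verify the concavity/supersolution inequality, which amounts to checking that a certain Hessian-type expression is negative semidefinite on the domain, or equivalently that a finite-difference inequality holds for all admissible children splittings. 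Finally, for \eqref{mainin2} I would not build a new Bellman function but rather deduce it from the strong-type bound on the square function on $L^r$ for $1<r<2$ together with a weighted extrapolation / interpolation argument: the operator $S$ maps $L^r_w\to L^r_w$ with a constant controlled by $[w]_{A_r}$ to the power $(r-1)^{-1}$ (by the $p<2$ branch of the known $L^p$ weighted theory, or again by a Bellman argument at exponent $r$), and then a sharp interpolation between this and the trivial unweighted $L^2$ bound, optimized over $r\in(1,2)$, produces the stated $\left(\frac{2r}{2-r}[w]_{A_r}\right)^{1/2}$; tracking the numerical constant through this optimization gives the explicit $\frac{2r}{2-r}$ factor.

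I expect the main obstacle to be step (iv): producing an explicit Bellman function whose concavity can be checked by hand and which simultaneously yields the \emph{optimal} power of $[w]_{A_2}$ (linear in \eqref{mainin}, and with a reasonable constant) rather than a suboptimal one. The difficulty is that the natural ``first guess'' functions tend to give either the wrong exponent or an intractable Hessian; one usually has to engineer the correction terms so that the $A_2$ constraint $uv\le Q$ is used exactly where the concavity would otherwise fail, and proving the resulting differential inequality requires a somewhat delicate case analysis (according to whether the constraint $uv\le Q$ or $uv\ge 1$ is active). A secondary technical point is ensuring the induction is valid for all $\varphi\in L^2_w$ and not merely for bounded, finitely-supported $\varphi$; this is handled by a routine truncation and monotone-convergence argument, but it must be stated. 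For \eqref{mainin2}, the subtlety is making the interpolation \emph{sharp} in the constant, which forces one to use a precise form of the weighted interpolation theorem (with explicit control of constants) rather than a soft version.
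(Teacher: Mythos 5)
Your plan for \eqref{mainin} and \eqref{mainin6} matches the paper's strategy in outline --- construct an explicit $B(x,y,u,v)$ on a hyperbolic domain, verify a size condition and a midpoint-concavity condition, and run the dyadic induction --- but you have not exhibited the actual Bellman functions, and you correctly identify this as the real content. The paper's functions are
\begin{equation*}
B_c(x,y,w,v)=x^2 w\,\varphi(wv)-40\,c\,y\,w,\quad \varphi(t)=2-\frac1t-\frac{\ln t}{2c},
\end{equation*}
for \eqref{mainin}, and
\begin{equation*}
B_c(x,y,w,v)=yw-\frac{16c^2 x^2 w}{(wv-1/2)^{\alpha}},\quad \alpha=1-\frac{1}{4c},
\end{equation*}
for \eqref{mainin6}; the concavity reduces to showing a $3\times 3$ Hessian-minus-correction is negative semidefinite, which is done by Sylvester's criterion. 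One technical point your sketch omits entirely and that affects the constants: in the dyadic induction, the parent point $(w,v)=\langle w\rangle_I,\langle w^{-1}\rangle_I)$ and the two children points all lie in $\Omega_c$ with $c=[w]_{A_2}$, but the line segment joining the children may leave $\Omega_c$. One needs a geometric lemma (the paper's Lemma \ref{geomlemma}) showing the segment stays in $\Omega_{2c}$, and consequently the Bellman function is applied with doubled parameter $2c$ --- this is precisely why the final constant in \eqref{mainin} is $160[w]_{A_2}$ (rather than $80$, which is what the same Bellman function yields in the continuous-martingale setting where no doubling is required).

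For \eqref{mainin2} your route is genuinely different from the paper's, and it has a gap. You propose to take the weighted bound $S:L^r_w\to L^r_w$ (with constant controlled by $[w]_{A_r}^{1/(r-1)}$) and interpolate it against the unweighted $L^2\to L^2$ bound to land on $L^2_w\to L^2_w$. But Stein--Weiss interpolation with change of weights gives $[L^{p_0}(w_0),L^{p_1}(w_1)]_\theta=L^{p_\theta}(w_\theta)$ with $1/p_\theta=(1-\theta)/p_0+\theta/p_1$; putting $p_0=r<2$, $w_0=w$, $p_1=2$, $w_1=1$, the only way to reach $p_\theta=2$ is $\theta=1$, at which point the weight degenerates to $1$ and you recover only the trivial unweighted estimate. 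There is no choice of $\theta$ that gives an $L^2_w$ target. Extrapolation a l\`a Rubio de Francia also does not do this directly: it produces $L^2_w$ bounds in terms of $[w]_{A_2}$, not $[w]_{A_r}$, which defeats the purpose. The paper instead proves \eqref{mainin2} by constructing a third Bellman function on the $A_r$ hyperbolic domain $\Omega_c^r=\{1\le wv^{r-1}\le c\}$, namely
\begin{equation*}
B_{c,r}(x,y,w,v)=yw-\frac{rc}{2-r}\cdot\frac{x^2}{v^{r-1}},
\end{equation*}
with the same dyadic induction and the same $\Omega_c^r\to\Omega_{2c}^r$ doubling lemma. The role of \eqref{mainin2} is then to feed into the Coifman--Fefferman open-ended property of $A_2$ (Lemma \ref{CFlemma}): choosing $r=2-\kappa_2^{-1}[w]_{A_2}^{-1}$ turns the $A_r$-constant into a linear dependence on $[w]_{A_2}$. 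So to repair your proposal for \eqref{mainin2}, you should replace the interpolation step by a direct Bellman construction in the $A_r$ geometry.
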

The reason why we have included \eqref{mainin2} in the above statement is that this estimate implies the weighed $L^2$ bound with the linear dependence on $[w]_{A_2}$, and hence can be regarded as an improvement of \eqref{mainin6}. To see the implication, recall the following classical fact, due to Coifman and Fefferman \cite{CF}. 

\begin{lemma}\label{CFlemma}
There is a constant $\kappa_{p}$ depending only on $p$ such that the following holds. If $w$ is an $A_p$ weight ($1<p<\infty$) on an interval, then $w$ is an $A_{p-\e}$ weight, where $\e= \kappa_{p}^{-1}[w]_{A_p}^{-1/(p-1)}$. Moreover, we have $[w]_{A_{p-\e}}\leq \kappa_{p}[w]_{A_p}$.
\end{lemma}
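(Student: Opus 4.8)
The statement to prove is Lemma~\ref{CFlemma}, the Coifman--Fefferman ``self-improvement'' (open-ended) property of the Muckenhoupt classes, together with the quantitative control on the characteristic of the improved class.

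\medskip

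The plan is to reduce everything to a quantitative reverse H\"older inequality for $A_p$ weights and then track constants carefully. First I would recall that $w\in A_p$ on an interval forces the companion weight $\sigma=w^{-1/(p-1)}$ to satisfy $\sigma\in A_{p'}$, with $[\sigma]_{A_{p'}}=[w]_{A_p}^{1/(p-1)}$, and that both $w$ and $\sigma$ are doubling. Next, the heart of the matter is the quantitative reverse H\"older inequality: there is a dimensional (here one-dimensional) constant $c$ so that any $A_\infty$ weight $u$ satisfies, for every interval $J$,
\begin{equation*}
\left(\frac{1}{|J|}\int_J u^{1+\delta}\right)^{1/(1+\delta)}\leq \frac{2}{|J|}\int_J u,\qquad \delta=\frac{1}{c\,[u]_{A_\infty}},
\end{equation*}
where $[u]_{A_\infty}$ can be taken as $\sup_I \left(\frac{1}{|I|}\int_I u\right)\exp\left(\frac{1}{|I|}\int_I \log u^{-1}\right)$, which is dominated by $[u]_{A_p}$. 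This is the Fujii--Wilson form of the reverse H\"older inequality with the sharp linear-in-$[u]_{A_\infty}$ exponent; I would cite it rather than reprove it, though it itself has a short Bellman/Calder\'on--Zygmund stopping-time proof.

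\medskip

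With the reverse H\"older inequality in hand, I would argue as follows. Apply it simultaneously to $w$ and to $\sigma=w^{-1/(p-1)}$, obtaining exponents $1+\delta_1$ and $1+\delta_2$ with $\delta_i\gtrsim [w]_{A_p}^{-1}$ (using $[\sigma]_{A_\infty}\le[\sigma]_{A_{p'}}=[w]_{A_p}^{1/(p-1)}\le \max\{[w]_{A_p},[w]_{A_p}^{1/(p-1)}\}$, which is controlled by a power of $[w]_{A_p}$; one must be slightly careful when $p$ is close to $1$, which is exactly where the stated $\e=\kappa_p^{-1}[w]_{A_p}^{-1/(p-1)}$ shows up). Set $\delta=\min\{\delta_1,\delta_2\}$ and choose $\e>0$ so that $-1/(p-1-\e)=-(1+\delta)/(p-1)$, i.e. $\e = \frac{(p-1)\delta}{1+\delta}$, which is comparable to $[w]_{A_p}^{-1/(p-1)}$ up to a constant $\kappa_p$. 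Then for any interval $I$, by H\"older (or directly by the reverse H\"older bound applied to $\sigma$),
\begin{equation*}
\left(\frac{1}{|I|}\int_I w^{-1/(p-1-\e)}\right)^{p-1-\e}
=\left(\frac{1}{|I|}\int_I \sigma^{1+\delta}\right)^{(p-1)/(1+\delta)}
\le \left(\frac{2}{|I|}\int_I \sigma\right)^{p-1},
\end{equation*}
so that
\begin{equation*}
[w]_{A_{p-\e}}
=\sup_I\left(\frac{1}{|I|}\int_I w\right)\left(\frac{1}{|I|}\int_I w^{-1/(p-1-\e)}\right)^{p-1-\e}
\le 2^{p-1}\,[w]_{A_p},
\end{equation*}
which is the desired bound $[w]_{A_{p-\e}}\le\kappa_p[w]_{A_p}$ (one absorbs $2^{p-1}$ and any loss from the bookkeeping into $\kappa_p$). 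If instead one uses the reverse H\"older inequality for $w$ itself, the analogous computation gives membership in a better class from that side; taking the minimum of the two exponents is what makes a single $\e$ work.

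\medskip

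The main obstacle, and the only genuinely delicate point, is getting the \emph{sharp} power $[w]_{A_p}^{-1/(p-1)}$ in $\e$ rather than a cruder power: this forces one to use the precise linear dependence $\delta\sim[u]_{A_\infty}^{-1}$ in the reverse H\"older inequality (the classical Gehring-type argument only gives an exponential dependence, which is not good enough), and to apply it on the $\sigma$-side, since $[\sigma]_{A_\infty}^{-1}\gtrsim[w]_{A_p}^{-1/(p-1)}$ whereas $[w]_{A_\infty}^{-1}$ would only yield $\e\gtrsim[w]_{A_p}^{-1}$. Everything else is routine: the reverse H\"older inequality is quoted, the translation between exponents of $\sigma$ and membership of $w$ in $A_{p-\e}$ is the elementary algebra above, and the doubling constants needed only enter the proof of the reverse H\"older inequality itself. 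I would therefore present the proof as: (i) state the quantitative reverse H\"older inequality with reference; (ii) apply it to $\sigma=w^{-1/(p-1)}$; (iii) do the one-line exponent computation to identify $\e$ and bound $[w]_{A_{p-\e}}$.
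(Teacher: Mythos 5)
The paper does not prove Lemma~\ref{CFlemma}; it cites Coifman--Fefferman and uses the statement as a black box, so there is no in-paper proof to compare against. Your attempt supplies a proof, and its core is correct: reduce to a quantitative reverse H\"older inequality for the dual weight $\sigma=w^{-1/(p-1)}$, use $[\sigma]_{A_\infty}\leq[\sigma]_{A_{p'}}=[w]_{A_p}^{1/(p-1)}$ together with the sharp reverse H\"older exponent $\delta\sim 1/[\sigma]_{A_\infty}$, and observe the algebraic identity $\sigma^{1+\delta}=w^{-1/(p-1-\e)}$ with $\e=(p-1)\delta/(1+\delta)$; the resulting bound $[w]_{A_{p-\e}}\leq 2^{p-1}[w]_{A_p}$ follows in one line. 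This is indeed the standard modern route to the sharp open-ended property.

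Two small inaccuracies in the write-up are worth correcting, though they do not affect the validity. First, the step where you propose applying reverse H\"older to both $w$ and $\sigma$ and ``taking the minimum of the two exponents'' is unnecessary: only the $\sigma$-side is used in the final computation, since the $A_{p-\e}$ characteristic involves only a higher power of $\sigma$, not of $w$. Applying reverse H\"older to $w$ itself improves $w$ in the opposite direction (membership in $A_q$ for $q>p$, or, equivalently, in a smaller reverse-H\"older class), which is not what the lemma asks for, so the phrase ``taking the minimum'' could be dropped entirely. Second, the remark that ``the classical Gehring-type argument only gives an exponential dependence'' overstates the situation: Gehring/Calder\'on--Zygmund stopping-time arguments already yield $\delta\gtrsim 1/[\sigma]_{A_\infty}$ for natural choices of $A_\infty$ characteristic; the Hyt\"onen--P\'erez contribution is the precise constant and the use of the (smaller) Fujii--Wilson constant. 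Also note you \emph{define} $[u]_{A_\infty}$ by the Hru\v{s}\v{c}ev logarithmic expression but call it ``the Fujii--Wilson form''; either works here since both are $\leq [u]_{A_p}$, but you should pick one and name it correctly, and cite the corresponding sharp reverse H\"older inequality explicitly. Finally, your displayed intermediate inequality gives $\e\geq\kappa_p^{-1}[w]_{A_p}^{-1/(p-1)}$ rather than equality; one should add the (trivial) observation that $A_q$-classes are nested with $[w]_{A_{p-\e}}\leq[w]_{A_{p-\e'}}$ for $\e\leq\e'$, so one may shrink $\e$ to the exact value $\kappa_p^{-1}[w]_{A_p}^{-1/(p-1)}$ claimed in the lemma.
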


We apply this lemma with $p=2$.   Taking  $r=2-\e=2-\kappa_2^{-1}[w]_{A_2}^{-1}$, the estimate \eqref{mainin2} yields
 $$||S(\varphi)||_{L^2_w(\mathfrak{I})}\leq \left(\frac{2r}{2-r}[w]_{A_r}\right)^{1/2}||\varphi||_{L^2_w(\mathfrak{I})}\leq 2\kappa_2[w]_{A_2}||\varphi||_{L^2_w(\mathfrak{I})},$$
 as desired.
 
We have organized the paper as follows. In the next section we introduce the Bellman functions corresponding to the estimates \eqref{mainin}, \eqref{mainin6} and \eqref{mainin2}, and study their properties. \S\ref{proofthem1} is devoted to the proof of Theorem \ref{weights}. In \S\ref{mart-wights}, we establish an appropriate probabilistic analogue of Theorem \ref{weights} for continuous time martingales  and then, in \S\ref{L-P}  show how this  yields similar results  for the classical Lusin and Littlewood-Paley square functions.  In section \S\ref{Markovian} we further elaborate on extensions to more general Markovian semigroups.  

\section{Special functions}

Throughout this section, $c>1$ is a fixed parameter. For any $1<r<\infty$, the symbol $\Omega_c^r$ will denote the hyperbolic domain given by
$$ \Omega_c^r=\{(w,v)\in \bR_{+} \times \bR_{+}: 1\leq wv^{r-1}\leq c\}.$$

\subsection{Bellman function corresponding to \eqref{mainin}}
The key role in the proof of the estimate \eqref{mainin} is played by the function $B_c:\R\times [0,\infty)\times \Omega_c^2\to \R$, given by
$$ B_c(x,y,w,v)=x^2w\varphi(wv)-40cyw,$$
where
$$ \varphi(t)=2-\frac{1}{t}-\frac{\ln t}{2c},\qquad t\in [1,c].$$ 
In what follows, usually we will skip the lower index and write $B$ instead of $B_c$, but keep in mind that the function does depend on the parameter $c$. 
Let us study some simple majorization properties of $B$. We start with the trivial observation that $\varphi(t)\leq 2$ for all $t$, which implies the bound
\begin{equation}\label{init}
 B(x,x^2,w,v)\leq 2x^2w-40cx^2w\leq 0 \qquad \mbox{for all }x\in \R,\,(w,v)\in \Omega_c^2.
\end{equation}
 Next, note that for any $t\in [1,c]$ we have $\varphi(t)\geq 2-1-\frac{1}{2}=\frac{1}{2}$, and hence
\begin{equation}\label{maj}
B(x,y,w,v)\geq \frac{1}{2}w(x^2-80cy)\qquad \mbox{for all }(x,y,w,v)\in \R\times [0,\infty)\times \Omega_c^2.
\end{equation}
We turn our attention to the crucial property of $B$. It can be regarded as a concavity-type condition.

\begin{lemma}\label{convlemma}
Suppose that $(x,y,w,v)\in \R\times[0,\infty)\times\Omega_c^2$ is a given point and assume further that $e,\,f$ are real numbers such that the line segment with endpoints $(w\pm e,v\pm f)$ is entirely contained in $\Omega_c^2$. Then for any $d\in \R$ we have
\begin{equation}\label{conv}
\begin{split}
 &2B(x,y,z,w)\\
&\geq B(x-d,y+d^2,w-e,v-f)+B(x+d,y+d^2,w+e,v+f).
\end{split}
\end{equation}
\end{lemma}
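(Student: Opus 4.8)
The plan is to remove the variable $y$, reduce \eqref{conv} to a pointwise second-order estimate on the ``frozen'' function $\tilde B(x,w,v):=B(x,0,w,v)=x^{2}w\varphi(wv)$, and then verify that estimate by a direct computation which boils down to the positive semidefiniteness of an explicit $3\times 3$ matrix.

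First I would observe that $y$ plays no role. Since $B$ is affine in $y$ with $B_{y}=-40cw$, and both summands on the right of \eqref{conv} carry $y+d^{2}$, all $y$-terms cancel, so \eqref{conv} is equivalent to
$$\tilde B(x-d,w-e,v-f)+\tilde B(x+d,w+e,v+f)\le 2\tilde B(x,w,v)+80c\,d^{2}w.$$
Next I would set $P(s)=(x+sd,\,w+se,\,v+sf)$ and $\Psi(s)=\tilde B(P(s))$ for $s\in[-1,1]$. Because $\varphi(t)=2-t^{-1}-(2c)^{-1}\ln t$ extends to a $C^{\infty}$ function on $(0,\infty)$ and the segment $s\mapsto(w+se,v+sf)$ stays inside $\Omega_{c}^{2}$, hence in the open first quadrant, $\Psi$ is $C^{2}$ and $\Psi(1)+\Psi(-1)-2\Psi(0)=\int_{-1}^{1}(1-|s|)\Psi''(s)\,ds$. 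Since $P$ is affine, $\Psi''(s)=(d,e,f)\nabla^{2}\tilde B(P(s))(d,e,f)^{\top}$, so the displayed inequality, and hence the lemma, follows once I prove the pointwise Hessian bound
$$(d,e,f)\,\nabla^{2}\tilde B(x,w,v)\,(d,e,f)^{\top}\le 80c\,w\,d^{2}\qquad\text{for }w,v>0,\ wv\in[1,c],$$
because then $\Psi''(s)\le 80cd^{2}(w+se)$, and integrating against the even weight $1-|s|$ of unit mass annihilates the part linear in $s$ and leaves exactly $80cd^{2}w$ on the right.

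The remaining task is this Hessian bound, which is where I would do the actual computation. Abbreviating $t=wv$, the key simplifications are the two identities $\varphi(t)+t\varphi'(t)=2-(2c)^{-1}(1+\ln t)$ and $2\varphi'(t)+t\varphi''(t)=-(2ct)^{-1}$, which collapse $\tilde B_{ww},\tilde B_{wv},\tilde B_{vv}$ to simple rational expressions. The case $x=0$ is trivial (only $\tilde B_{xx}=2w\varphi(t)\le 4w<80cw$ survives), so I take $x\neq 0$ and substitute $E=xe,\ F=xf$; the bound becomes $M\succeq 0$, where
$$M=\begin{pmatrix} 2Kw & -2a & -b/v^{2}\\ -2a & \dfrac{1}{2cw} & \dfrac{1}{2cv}\\ -b/v^{2} & \dfrac{1}{2cv} & g/v^{3}\end{pmatrix},\quad K=40c-\varphi(t),\ a=2-\tfrac{1+\ln t}{2c},\ b=2-\tfrac tc,\ g=2-\tfrac{t}{2c}.$$
From $t\in[1,c]$ and $\tfrac12\le\varphi(t)<2$ one gets $K>40c-2$, $a\in[3/2,2)$, $b\in[1,2)$, $g\in[3/2,2)$, together with the algebraic identity $2cg-t=4c-2t$.

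Finally I would check $M\succeq 0$ by Sylvester's criterion. The first two leading minors are harmless ($2Kw>0$ and $M_{11}M_{22}-M_{12}^{2}=K/c-4a^{2}>38-16>0$), and a short computation using $2cg-t=4c-2t$ gives
$$v^{3}\det M=\frac{K(2c-t)}{c^{2}}+\frac{2ab}{c}-4a^{2}g-\frac{b^{2}}{2ct}\ .$$
Here $2c-t\ge c$ forces $K(2c-t)/c^{2}\ge K/c>38$, while $2ab/c>0$, $4a^{2}g<32$ and $b^{2}/(2ct)<2$, so $v^{3}\det M>38-32-2=4>0$. This determinant estimate is the only delicate point: it is the sole place where the numerical constant $40$ in the term $-40cyw$ is actually spent, the argument being a balance of $K\sim 40c$ against the uniformly bounded quantities $a,b,g$ — everything else reduces to exact cancellation or crude bounding.
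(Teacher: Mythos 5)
Your proof is correct and follows essentially the same strategy as the paper: both reduce \eqref{conv} to nonpositive-definiteness of $\nabla^2 b-\operatorname{diag}(80cw,0,0)$ along the segment (the paper via the symmetrized concavity of $F(t)=b(x+td,w+te,v+tf)-40ct^2d^2w$, you via the Fej\'er-kernel identity and the evenness of $1-|s|$, which handle the $se$ term in the same way), and then verify the matrix condition by Sylvester-type minor checks driven by the identities $\varphi(t)+t\varphi'(t)=2-(1+\ln t)/(2c)$ and $2\varphi'(t)+t\varphi''(t)=-(2ct)^{-1}$. The only real difference is bookkeeping: your substitution $E=xe,\ F=xf$ and the resulting closed form for $v^3\det M$ yield a cleaner, more explicit determinant estimate than the paper's sequence of row/column operations, but the quantities being bounded and the role played by the constant $40$ are the same.
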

\begin{proof}
Introduce the function $b:\R\times \Omega_c^2\to [0,\infty)$, given by $ b(x,w,v)=x^2w\varphi(wv).$ Of course, we have the identity
$$ B(x,y,w,v)=b(x,w,v)-40cyw.$$
Since $ -c(y+d^2)(w-e)-c(y+d^2)(w+e)=-2cyw-2cd^2w,$ we see that the claim is equivalent to
\begin{equation}\label{weath}
 2b(x,w,v)\geq b(x-d,w-e,v-f)+b(x+d,w+e,v+f)-80cd^2w.
\end{equation}
To show this, we will prove that the matrix
\begin{equation}\label{D^2}
\mathbb{A}(x,y,w)= D^2b(x,w,v)-\left[\begin{array}{ccc}
80cw & 0 & 0\\
0 & 0 & 0\\
0 & 0 & 0
\end{array}\right]
\end{equation}
is nonpositive-definite. To see how this yields \eqref{weath}, consider the function
$$ F(t)= b(x+td,w+te,v+tf)-40ct^2d^2w,\qquad t\in [-1,1].$$
Note that for each such $t$, the point $(x+td,w+te,v+tf)$ lies in the domain of $b$: see the assumption in the statement of the lemma above.
Denoting the vector $(d,e,f)$ by $\Delta$, we compute that
\begin{align*}
 F''(t)+F''(-t)&=\left(D^2b(x+td,w+te,v+tf)\Delta,\Delta\right)-80cwd^2\\
&\quad +\left(D^2b(x-td,w-te,v-tf)\Delta,\Delta\right)-80cwd^2\\
&=\left(D^2b(x+td,w+te,v+tf)\Delta,\Delta\right)-80c(w+te)d^2\\
&\quad +\left(D^2b(x-td,w-te,v-tf)\Delta,\Delta\right)-80c(w-te)d^2\\
&=(\mathbb{A}(x+td,w+te,v+tf)\Delta,\Delta)\\
&\quad +(\mathbb{A}(x-td,w-te,v-tf)\Delta,\Delta)\\
&\leq 0.
\end{align*}
Consequently, $F(1)+F(-1)\leq F(0)$, which is precisely \eqref{weath}.  To show that $\mathbb{A}(x,w,v)$ is nonpositive-definite, we compute directly that the matrix is equal to
$$\left[\begin{array}{ccc}
2w\varphi(t)-80cw & 2x\varphi+2xt\varphi'(t) & 2xw^2\varphi'(t)\\
2x\varphi(t)+2xt\varphi'(t) & 2x^2v\varphi'(t)+2x^2tv\varphi''(t) & 2x^2w\varphi'(t)+x^2tw\varphi''(t)\\
2xw^2\varphi'(t) & 2x^2w\varphi'(t)+x^2tw\varphi''(t) & x^2w^3\varphi''(t)
\end{array}\right],$$
where $t=wv$. By well-known facts from linear algebra, it is enough to show that
\begin{equation}\label{1}
 x^2w^3\varphi''(t)\leq 0,
 \end{equation}
\begin{equation}\label{2}
\operatorname*{det}\left[
\begin{array}{cc}
2x^2v\varphi'(t)+2x^2tv\varphi''(t) & 2x^2w\varphi'(t)+x^2tw\varphi''(t)\\
2x^2w\varphi'(t)+x^2tw\varphi''(t) & x^2w^3\varphi''(t)
\end{array}
\right]\leq 0
 \end{equation}
and
\begin{equation}\label{3}
\operatorname*{det}\mathbb{A}(x,w,v)\leq 0.
 \end{equation}
To establish \eqref{1}, observe that $t=wv\in [1,c]$, by the definition of $\Omega_c^2$, and therefore
$$ x^2w^3\varphi''(t)=-\frac{2x^2w^3}{2ct^3}(4c-t)\leq 0.$$
The inequality \eqref{2} is equivalent to $ 2 \varphi'(t)(2\varphi'(t)+t\varphi''(t))\leq 0$, and follows from the estimates
$$ \varphi'(t)=\frac{1}{2ct^2}\left(2c-t\right)\geq 0,\qquad 2\varphi'(t)+t\varphi''(t)=-\frac{1}{2ct}\leq 0.$$
Finally, we turn our attention to \eqref{3}. Let us simplify the matrix $\mathbb{A}$, by carrying out some elementary operations. Dividing the second row and the second column by $x$, and then the third row and column by $xw$, we see that the determinant of $\mathbb{A}$ has the same sign as
$$\operatorname*{det}\left[\begin{array}{ccc}
2w\varphi(t)-80cw & 2\varphi+2t\varphi'(t) & 2w\varphi'(t)\\
2\varphi(t)+2t\varphi'(t) & 2v\varphi'(t)+2tv\varphi''(t) & 2\varphi'(t)+t\varphi''(t)\\
2w\varphi'(t) & 2\varphi'(t)+t\varphi''(t) & w\varphi''(t)
\end{array}\right].$$
Next, multiply the third row by $v$ and subtract it from the second row; then multiply the second row by $w$ and subtract it from the first row. As the result, we obtain that the sign of the determinant of $\mathbb{A}$ is the same as that of
\begin{align*}
&\operatorname*{det}\left[\begin{array}{ccc}
-80cw & 2\varphi+2t\varphi'(t) & 0\\
2\varphi(t) & 0 & 2\varphi'(t)\\
2w\varphi'(t) & 2\varphi'(t)+t\varphi''(t) & w\varphi''(t)
\end{array}\right]\\
&=4w\left[\left(2\left(\varphi'(t)\right)^2-\varphi(t)\varphi''(t)\right)(\varphi(t)+t\varphi'(t))+40c\varphi'(t)\left(2\varphi'(t)+t\varphi''(t)\right)\right].
\end{align*}
However, we compute that
$$ \varphi(t)+t\varphi'(t)=2-\frac{t}{2c}-\frac{1}{2c}\leq 2,$$
$$ 2(\varphi'(t))^2=\varphi'(t)\cdot \frac{2c-t}{ct^2}\leq \frac{2\varphi'(t)}{t}$$
and, since $\varphi(t)\leq 2$, 
$$ -\frac{\varphi(t)\varphi''(t)}{\varphi'(t)}\leq \frac{2\left(\frac{2}{t^3}-\frac{1}{2ct^2}\right)}{\frac{1}{t^2}-\frac{1}{2ct}}\leq \frac{8}{t}.$$
Consequently,
$$ \left(2\left(\varphi'(t)\right)^2-\varphi(t)\varphi''(t)\right)(\varphi(t)+t\varphi'(t))\leq \frac{20\varphi'(t)}{t}$$
and since
$$40c\varphi'(t)\left(2\varphi'(t)+t\varphi''(t)\right)=-\frac{20\varphi'(t)}{t},$$
the inequality \eqref{3} is satisfied. This completes the proof.
\end{proof}

\subsection{Bellman function corresponding to \eqref{mainin2}}

The Bellman function associated with the $A_r$-estimate is slightly simpler. Let $r$ be an arbitrary number belonging to $(1,2)$ and define $B=B_{c,r}:\R\times [0,\infty)\times \Omega_c^r\to \R$ by
$$ B(x,y,w,v)=yw-\frac{rc}{2-r}\frac{x^2}{v^{r-1}}.$$
As previously, we will first establish the appropriate majorizations for $B$. By the definition of $\Omega_c^r$, we have $cv^{1-r}\geq w$ and hence
\begin{equation}\label{init2}
B(x,x^2,w,v)\leq x^2w\left(1-\frac{r}{2-r}\right)\leq 0 \qquad \mbox{for all }x\in \R,\,(w,v)\in \Omega_c^r,
\end{equation}
where in the last bound we used the estimate $r>1$. Furthermore, the inequality $v^{1-r}\leq w$ implies
\begin{equation}\label{maj2}
B(x,y,w,v)\geq yw-\frac{rc}{2-r}x^2w\qquad \mbox{for all }(x,y,w,v)\in \R\times [0,\infty)\times \Omega_c^r. 
\end{equation}
We turn to the analogue of Lemma \ref{convlemma}.

\begin{lemma}\label{convlemma2}
Suppose that $(x,y,w,v)\in \R\times[0,\infty)\times\Omega_c^r$ is a given point and assume further that $e,\,f$ are real numbers such that the line segment with endpoints $(w\pm e,v\pm f)$ is entirely contained in $\Omega_c^r$. Then for any $d\in \R$ we have
\begin{equation}\label{conv2}
\begin{split}
 &2B(x,y,z,w)\\
&\geq B(x-d,y+d^2,w-e,v-f)+B(x+d,y+d^2,w+e,v+f).
\end{split}
\end{equation}
\end{lemma}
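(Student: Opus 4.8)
The plan is to mimic the argument used for Lemma \ref{convlemma}: reduce the two-point inequality \eqref{conv2} to a second-derivative (Hessian) condition and then verify that condition directly. Write $B(x,y,w,v)=yw-\frac{rc}{2-r}g(x,w,v)$, where $g(x,w,v)=x^2v^{1-r}$. Since the term $yw$ contributes $(y+d^2)(w-e)+(y+d^2)(w+e)=2yw+2d^2w$ to the right-hand side of \eqref{conv2}, the inequality is equivalent to
\begin{equation*}
2g(x,w,v)\geq g(x-d,w-e,v-f)+g(x+d,w+e,v+f)-\frac{2(2-r)}{rc}\,d^2w.
\end{equation*}
Exactly as in the proof of Lemma \ref{convlemma}, set $F(t)=g(x+td,w+te,v+tf)+\frac{2-r}{rc}t^2d^2w$ for $t\in[-1,1]$; the segment hypothesis guarantees $(x+td,w+te,v+tf)$ stays in the domain of $g$. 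Then $F(1)+F(-1)\le F(0)$ will follow once we show that the matrix
\begin{equation*}
\mathbb{A}(x,w,v)=D^2g(x,w,v)+\frac{2(2-r)}{rc}\left[\begin{array}{ccc}0&0&0\\0&w&0\\0&0&0\end{array}\right]
\end{equation*}
(the Hessian being taken in the variables $x,w,v$) is nonnegative-definite on $\Omega_c^r$; note the sign is opposite to that in Lemma \ref{convlemma} because here $B$ has $-g$ with a positive coefficient, i.e. we want convexity of $g$ plus a controlled slack in the $w$-direction.

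Next I would compute $D^2g$ explicitly. Since $g=x^2v^{1-r}$ does not depend on $w$, the only nonzero second partials are $g_{xx}=2v^{1-r}$, $g_{xv}=g_{vx}=2(1-r)xv^{-r}$, and $g_{vv}=(1-r)(-r)x^2v^{-r-1}=r(r-1)x^2v^{-r-1}$. Thus $D^2g$ has a single zero row and column (the $w$-row/column), and the $w$-direction slack $\frac{2(2-r)}{rc}w$ sits precisely on that otherwise-empty diagonal entry. Checking nonnegative-definiteness of $\mathbb{A}$ then reduces, after permuting coordinates, to checking that (i) $\frac{2(2-r)}{rc}w\ge 0$, which is immediate since $1<r<2$, $c>1$, $w>0$; and (ii) the $2\times 2$ block in the $(x,v)$ variables,
\begin{equation*}
\left[\begin{array}{cc}2v^{1-r}&2(1-r)xv^{-r}\\2(1-r)xv^{-r}&r(r-1)x^2v^{-r-1}\end{array}\right],
\end{equation*}
is nonnegative-definite. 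Its diagonal entries are nonnegative ($v>0$, $r>1$), and its determinant is $2r(r-1)x^2v^{-2r}-4(1-r)^2x^2v^{-2r}=2(r-1)x^2v^{-2r}\bigl(r-2(r-1)\bigr)=2(r-1)(2-r)x^2v^{-2r}\ge 0$, again because $1<r<2$. Since a symmetric matrix whose principal submatrices obtained by deleting the zero coordinate are PSD, together with the fact that $\mathbb{A}$ is block-diagonal after reordering (the $w$-block decouples completely because $g$ is $w$-independent), $\mathbb{A}$ is nonnegative-definite. Consequently $F$ is convex on $[-1,1]$, so $F(1)+F(-1)\ge 2F(0)$, which unwinds to \eqref{conv2}.

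The main point requiring care — though it is not really an obstacle — is getting the \emph{sign} and \emph{placement} of the slack term right: unlike in Lemma \ref{convlemma}, the quadratic term in $y$ interacts with $w$ (not a ``$-cyw$'' term but a plain ``$yw$''), so the $d^2$ correction lands on the $(w,w)$ entry with a positive sign, and we need $\mathbb{A}\ge 0$ rather than $\le 0$. Once the bookkeeping is done, the verification is elementary because $g$ is independent of $w$, which makes the Hessian degenerate in a way that is perfectly aligned with where the slack appears; all inequalities reduce to $1<r<2$. One should also note that the statement of the lemma writes $2B(x,y,z,w)$ on the left — as in Lemma \ref{convlemma}, this is a typo for $2B(x,y,w,v)$, and the proof treats it as such.
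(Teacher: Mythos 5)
Your reduction of \eqref{conv2} to an inequality for $g(x,w,v)=x^2v^{1-r}$ is not an equivalence; it is the reverse of the correct one. Expanding $2B\ge B_-+B_+$ with $B=yw-\frac{rc}{2-r}g$ and cancelling $2yw$ gives
\[
\frac{rc}{2-r}\bigl[g(x-d,w-e,v-f)+g(x+d,w+e,v+f)\bigr]-\frac{2rc}{2-r}\,g(x,w,v)\;\ge\;2d^2w,
\]
i.e.\ $g_-+g_+-2g\ge \frac{2(2-r)}{rc}\,d^2w$, a strict \emph{convexity-plus-slack} requirement on $g$; you wrote $2g\ge g_-+g_+-\frac{2(2-r)}{rc}d^2w$, which is the opposite. (This also explains the internal inconsistency in your text between ``$F(1)+F(-1)\le F(0)$'' and ``$F(1)+F(-1)\ge 2F(0)$''.) The slack $\frac{2(2-r)}{rc}\,d^2w$ is a quadratic form in the increment $\Delta=(d,e,f)$, and since it is proportional to $d^2$, its contribution sits in the $(x,x)$ entry, not the $(w,w)$ entry: the matrix you must show nonnegative-definite is
\[
D^2g(x,w,v)-\frac{2(2-r)}{rc}\begin{pmatrix}w&0&0\\0&0&0\\0&0&0\end{pmatrix},
\]
with a \emph{minus} sign and on the $x$-diagonal; equivalently, in the paper's notation with $b=-\frac{rc}{2-r}g$, one needs $D^2b+\operatorname{diag}(2w,0,0)\le 0$. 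You instead placed $+\frac{2(2-r)}{rc}w$ on the $(w,w)$ diagonal, a matrix that is trivially PSD because $D^2g$ itself is PSD (its $(x,v)$-block has determinant $2(r-1)(2-r)x^2v^{-2r}\ge 0$).

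This is precisely what tips off the gap: your verification never uses the hypothesis $(w,v)\in\Omega_c^r$, i.e.\ $w\le cv^{1-r}$, whereas the correct $(1,1)$ entry $2v^{1-r}-\frac{2(2-r)}{rc}w$ and the corresponding $2\times2$ determinant are nonnegative \emph{only because} $w\le cv^{1-r}$ (with equality in the determinant precisely on the upper boundary $wv^{r-1}=c$). Without that constraint the lemma is false: for $w$ large the $(1,1)$ entry of the correct matrix is negative. The paper's proof is exactly the corrected version of your plan — it uses $b=-\frac{rc}{2-r}x^2v^{1-r}$, places the slack $+2w$ at $(1,1)$ inside $D^2b+\operatorname{diag}(2w,0,0)\le 0$, and then invokes $w\le cv^{1-r}$ to close the determinant estimate. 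So the strategy is right, but both the sign and the location of the slack term must be fixed, and the $\Omega_c^r$ constraint must enter the determinant step.
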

\begin{proof}
Repeating the reasoning from the proof of Lemma \ref{convlemma}, we see that it is enough to show that the matrix
$$ \mathbb{A}(x,w,v)=D^2b(x,w,v)+\left[\begin{array}{ccc}
2w & 0 & 0\\
0 & 0 & 0\\
0 & 0 & 0
\end{array}\right]$$
is nonpositive-definite. Here $b$ is a function given on $\R\times \Omega_c^r$ by the formula
$$ b(x,w,v)=-\frac{rc}{2-r}\frac{x^2}{v^{r-1}}.$$
We compute directly that
$$ \mathbb{A}(x,w,v)=\left[\begin{array}{ccc}
2w-\frac{2rc}{(2-r)v^{r-1}} & 0 & \frac{2r(r-1)cx}{(2-r)v^{r}}\\
0 & 0 & 0\\
\frac{2r(r-1)cx}{(2-r)v^{r}} & 0 & -\frac{r^2(r-1)c}{2-r}\frac{x^2}{v^{r+1}}
\end{array}\right].$$
We see that the entry in the lower-right corner is nonpositive and the determinant is equal to $0$. Thus it is enough to show that
$$ \operatorname*{det}\left[\begin{array}{cc}
2w-\frac{2rc}{(2-r)v^{r-1}} &  \frac{2r(r-1)cx}{(2-r)v^{r-2}}\\
\frac{2r(r-1)cx}{(2-r)v^{r-2}} &  -\frac{r^2(r-1)c}{2-r}\frac{x^2}{v^{r+1}}
\end{array}\right]\geq 0.$$
But $w\leq cv^{1-r}$, so the above determinant is not smaller than
\begin{align*}
& \operatorname*{det}\left[\begin{array}{cc}
\frac{2c}{v^{r-1}}-\frac{2rc}{(2-r)v^{r-1}} &  \frac{2r(r-1)cx}{(2-r)v^{r-2}}\\
\frac{2r(r-1)cx}{(2-r)v^{r-2}} &  -\frac{r^2(r-1)c}{2-r}\frac{x^2}{v^{r+1}}
\end{array}\right]\\
&=\frac{2c^2x^2}{v^{2r}}\left[-\left(1-\frac{r}{2-r}\right)\frac{r^2(r-1)}{2-r}-\frac{2r^2(r-1)^2}{(2-r)^2}\right]=0.
\end{align*}
This completes the proof.
\end{proof}

\subsection{The special function corresponding to \eqref{mainin6}}\label{alternative} Finally, we turn our attention to the second weighted estimate of Theorem \ref{weights}. 
The Bellman function is slightly more complicated than that studied in the preceding section, but it has the advantage that it produces a ,,self-contained'' proof of \eqref{mainin6} (i.e., it does not refer to the self-improving properties of $A_2$ weights). Define $B=B_c:\mathbb{R}\times [0,\infty)\times \Omega^2_c\to \R$ given by the formula
$$ B(x,y,w,v)=yw-\frac{16c^2 x^2w}{(wv-1/2)^\alpha},$$
where $\alpha=1-(4c)^{-1}$. Let us now establish the appropriate majorizations for this object. First, note that
$$ B(x,x^2,w,v)\leq x^2w\left[1-\frac{16c^2}{(c-1/2)^\alpha}\right]\leq x^2w(1-16c)\leq 0.$$
Furthermore, for any $(x,y,w,v)$ from the domain of $B$, we clearly have
$$ B(x,y,w,v)\geq yw- 2^\alpha\cdot 16 c^2 x^2w\geq yw-32 c^2x^2w.$$
Finally, we will show that $B$ enjoys the property described in Lemma \ref{convlemma}. We have
$$ B(x,y,w,v)=16c^2\left[\frac{yw}{16c^2}+b(x,w,v)\right],$$
where $b(x,w,v)=-x^2w(wv-1/2)^{-\alpha}$. Arguing as in the preceding subsections, we see that it is enough to prove that the matrix
$$ \mathbb{A}(x,w,v)=D^2b(x,w,v)+\left[\begin{array}{ccc} 2w/(16c^2) & 0 & 0\\
0 & 0 & 0\\
0 & 0 & 0
\end{array}\right]$$
is nonpositive-definite. Substituting $t=wv$, we compute that
$$ \mathbb{A}(x,w,v)=\left[
\begin{array}{ccc}
\frac{w}{8c^2}-\frac{2w}{(t-1/2)^\alpha} & -\frac{2x((1-\alpha)t-1/2)}{(t-1/2)^{\alpha+1}} & \frac{2\alpha x w^2}{(t-1/2)^{\alpha+1}}\\
-\frac{2x((1-\alpha)t-1/2)}{(t-1/2)^{\alpha+1}} & \frac{\alpha x^2v((1-\alpha)t-1)}{(t-1/2)^{\alpha+2}} & \frac{\alpha x^2w((1-\alpha)t-1)}{(t-1/2)^{\alpha+2}}\\
\frac{2\alpha x w^2}{(t-1/2)^{\alpha+1}} & \frac{\alpha x^2 w((1-\alpha)t-1)}{(t-1/2)^{\alpha+2}} & -\frac{\alpha(\alpha+1)x^2w^3}{(t-1/2)^{\alpha+2}}.
\end{array}\right].$$
As previously, we apply Sylvester's criterion. Obviously, we have 
$$ -\frac{\alpha(\alpha+1)x^2w^3}{(t-1/2)^{\alpha+2}}\leq 0.$$
Furthermore,
\begin{align*}
&\operatorname*{det}\left[\begin{array}{cc}
\frac{\alpha x^2v((1-\alpha)t-1)}{(t-1/2)^{\alpha+2}} & \frac{\alpha x^2w((1-\alpha)t-1)}{(t-1/2)^{\alpha+2}}\\
\frac{\alpha x^2 w((1-\alpha)t-1)}{(t-1/2)^{\alpha+2}} & -\frac{\alpha(\alpha+1)x^2w^3}{(t-1/2)^{\alpha+2}}
\end{array}\right]=\frac{2\alpha^2x^4w^2(1-(1-\alpha)t)(t-1/2)}{(t-1/2)^{2\alpha+4}}\geq 0,
\end{align*}
since $1-(1-\alpha)t=1-t/(4c)>0$. Thus, to show that $\mathbb{A}(x,w,v)$ is nonpositive-definite, it suffices to show that its determinant is nonpositive. To do this, let us conduct some operations on the rows of this matrix. First, multiply the third row by $v/w$ and add it to the second row. Then
$$ \operatorname{det}\mathbb{A}(x,w,v)=\operatorname*{det}
\left[
\begin{array}{ccc}
\frac{w}{8c^2}-\frac{2w}{(t-1/2)^\alpha} & -\frac{2x((1-\alpha)t-1/2)}{(t-1/2)^{\alpha+1}} & \frac{2\alpha x w^2}{(t-1/2)^{\alpha+1}}\\
-\frac{2x}{(t-1/2)^\alpha} & 0 & \frac{2\alpha x^2w}{(t-1/2)^{\alpha+1}}\\
\frac{2\alpha x w^2}{(t-1/2)^{\alpha+1}} & \frac{\alpha x^2 w((1-\alpha)t-1)}{(t-1/2)^{\alpha+2}} & -\frac{\alpha(\alpha+1)x^2w^3}{(t-1/2)^{\alpha+2}}.
\end{array}\right].$$
Next, multiply the second row by $\alpha w^2/(t-1/2)$ and add it to the third row; furthermore, multiply the second row by $ w/x$ and subtract it from the first row. As the result, we see that
$$ \operatorname{det}\mathbb{A}(x,w,v)=\operatorname*{det}
\left[
\begin{array}{ccc}
\frac{w}{8c^2} & -\frac{2x((1-\alpha)t-1/2)}{(t-1/2)^\alpha} & 0\\
-\frac{2x}{(t-1/2)^\alpha} & 0 & \frac{2\alpha x^2w}{(t-1/2)^{\alpha+1}}\\
0 & \frac{\alpha x^2 w((1-\alpha)t-1)}{(t-1/2)^{\alpha+2}} & \frac{\alpha(\alpha-1)x^2w^3}{(t-1/2)^{\alpha+2}}.
\end{array}\right],$$
so the sign of $\operatorname*{det}\mathbb{A}(x,w,v)$ is the same as the sign of
\begin{align*}
& \operatorname*{det}
\left[
\begin{array}{ccc}
\frac{w}{16c^2} & -((1-\alpha)t-1/2) & 0\\
-1 & 0 & 1\\
0 & \alpha((1-\alpha)t-1) & \frac{(\alpha-1)w}{(t-1/2)^{\alpha}}
\end{array}\right]\\
&\qquad \qquad =\frac{((1-\alpha)t-1/2)(1-\alpha)}{(t-1/2)^\alpha}+\frac{\alpha(1-(1-\alpha)t)}{16c^2}.
\end{align*}
But the above expression is nonpositive; this can be equivalently rewritten in the form
$$ \alpha(1-(1-\alpha)t)(t-1/2)^\alpha<(1-\alpha)(1/2-(1-\alpha)t)\cdot 16c^2$$
and follows from the observations that $\alpha<1$, $0<1-(1-\alpha)t<1$, $(t-1/2)^\alpha<c^\alpha\leq c$ and $  (1-\alpha)(1/2-(1-\alpha)t)\cdot 16c^2=4c(1/2-t/(4c))\geq 4c(1/2-1/4)=c.$

\section{Proof of Theorem \ref{weights}}\label{proofthem1}
We start with the following geometric fact.

\begin{lemma}\label{geomlemma}
Assume that $c>1$ and $r\in (1,2]$. 
Suppose that points $\mathtt{P}$, $\mathtt{Q}$ and $\mathtt{R}=\alpha \mathtt{P}+(1-\alpha)\mathtt{Q}$ lie in $\Omega_c^r$. Then the whole line segment $\mathtt{P}\mathtt{Q}$ is contained within $\Omega_{2c}^r$.
\end{lemma}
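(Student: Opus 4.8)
The plan is to work directly with the defining inequality $1 \leq wv^{r-1} \leq c$ of the hyperbolic domain $\Omega_c^r$, and to exploit the fact that the function $\Phi(w,v) = wv^{r-1}$ is, along any line segment, a well-behaved function of the segment parameter. First I would parametrize the segment $\mathtt{P}\mathtt{Q}$ as $\mathtt{R}(s) = (1-s)\mathtt{P} + s\mathtt{Q}$ for $s \in [0,1]$, so that $\mathtt{R} = \mathtt{R}(1-\alpha)$ corresponds to an interior parameter value. Writing $g(s) = \Phi(\mathtt{R}(s))$, the goal becomes: knowing $g(0), g(1) \in [1,c]$ and $g(1-\alpha) \in [1,c]$, conclude $g(s) \in [1,2c]$ for all $s \in [0,1]$.

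The key structural observation is that $g$ is a concave function of $s$. Indeed, with $\mathtt{P} = (w_0,v_0)$, $\mathtt{Q} = (w_1,v_1)$, the first coordinate of $\mathtt{R}(s)$ is affine in $s$ and the second coordinate is affine in $s$; since $1 < r < 2$, the map $(w,v) \mapsto wv^{r-1}$ restricted to an affine line is concave (this is the standard fact underlying the geometry of $\Omega_c^r$ — the region $\{wv^{r-1} \geq 1\}$ is convex and $\{wv^{r-1} \leq c\}$ has convex complement within the positive quadrant, or one simply checks that $D^2(wv^{r-1})$ is negative semidefinite for $r \in (1,2]$). Hence $g$ is concave on $[0,1]$. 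Concavity immediately gives the lower bound: $g(s) \geq \min\{g(0), g(1)\} \cdot$ — more precisely, a concave function on $[0,1]$ with endpoint values $\geq 1$ satisfies $g(s) \geq (1-s)g(0) + s g(1) \geq 1$, so the constraint $wv^{r-1} \geq 1$ is preserved on the whole segment with no loss.

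The upper bound is where the factor of $2$ enters. Here I would use concavity together with the constraint at the intermediate point $s = 1-\alpha$. A concave function on $[0,1]$ that attains a value $\leq c$ at an interior point $s_0 = 1-\alpha$ and is $\leq c$ at both endpoints cannot be too large anywhere: the worst case is the piecewise-linear ``tent'' pinned at $(0,c)$, $(s_0,1)$ — wait, that goes down — the actual extremal configuration is the concave function determined by three constraint values, and one checks that $\sup_s g(s)$ is maximized (subject to $g(0) \leq c$, $g(s_0) \leq c$, $g(1) \leq c$, $g$ concave, $g \geq 1$) by an affine extension of two of the data points. The elementary estimate is: for concave $g$ with $g(0), g(s_0), g(1) \in [1,c]$ and $s_0 \in (0,1)$, one has $g(s) \leq \max\{g(0), g(1)\} + (c - 1) \leq 2c - 1 < 2c$ for all $s$, the $+(c-1)$ slack coming from extrapolating the chord through $(s_0, g(s_0))$ and whichever endpoint gives the larger value past that endpoint is impossible by concavity, so in fact the bound between any two constraint abscissae is just the chord value $\leq c$, and the only region where $g$ can exceed $c$ is outside $[0, s_0]$ or outside $[s_0, 1]$ — but those regions are empty since $s_0 \in (0,1)$. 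I should double check this: since $0 < s_0 < 1$, every $s \in [0,1]$ lies in $[0,s_0]$ or $[s_0,1]$, on each of which $g$ is bounded above by the chord joining two points with values $\leq c$, hence $g(s) \leq c \leq 2c$. So the upper bound actually holds with constant $c$, not $2c$; the statement's $2c$ is simply a safe, non-sharp constant. The main obstacle is thus purely bookkeeping: verifying the concavity of $wv^{r-1}$ along segments for $r \in (1,2]$ (a two-line Hessian computation) and then invoking the one-variable concavity estimate cleanly; there is no genuine difficulty, and the ``$2c$'' in the conclusion leaves ample room.
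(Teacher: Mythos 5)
Your argument hinges on the claim that $g(s)=\Phi(\mathtt{R}(s))$ with $\Phi(w,v)=wv^{r-1}$ is concave along the segment, and this is the point that fails. Computing the Hessian of $\Phi$ on the positive quadrant gives
$$D^2\Phi=\begin{pmatrix}0 & (r-1)v^{r-2}\\ (r-1)v^{r-2} & (r-1)(r-2)wv^{r-3}\end{pmatrix},$$
whose determinant is $-(r-1)^2v^{2(r-2)}<0$ for $r>1$; the matrix is indefinite, not negative semidefinite. Concretely, for $r=2$ and the segment $s\mapsto(1+s,1+s)$ one gets $g(s)=(1+s)^2$, which is strictly convex. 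What you have correctly observed is that the superlevel sets $\{\Phi\ge a\}$ are convex (epigraphs of the convex functions $v\mapsto av^{1-r}$), i.e.\ $\Phi$ is \emph{quasiconcave}. This makes $g$ unimodal and does give you the lower bound $g\ge\min\{g(0),g(1)\}\ge 1$, but quasiconcavity imposes no upper bound on the interior maximum in terms of $g(0),g(s_0),g(1)$, so the half of the lemma that actually requires work is untouched.

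Two further problems: First, even taking your concavity claim at face value, the upper bound step is internally inconsistent --- you invoke ``$g$ is bounded above by the chord joining two constraint points,'' which is the defining property of a \emph{convex} function, the opposite of what you asserted. Second, your conclusion that ``the upper bound actually holds with constant $c$, not $2c$'' is false: the segment $\mathtt{P}\mathtt{Q}$ can genuinely leave $\Omega_c^r$. For instance, with $r=2$ one can place $\mathtt{P}$ and $\mathtt{R}$ distinct on the upper hyperbola $wv=c$ and $\mathtt{Q}$ on the lower hyperbola $wv=1$ while keeping $\mathtt{R}=\tfrac12(\mathtt{P}+\mathtt{Q})$; then by AM--GM the midpoint of $\mathtt{P}\mathtt{R}$ (which lies on the segment $\mathtt{P}\mathtt{Q}$) has $wv>c$. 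So the enlargement to $\Omega_{2c}^r$ is not a ``safe, non-sharp'' redundancy; a genuine argument is needed. The paper's proof instead reduces to the boundary configuration just described and traps the sub-segment $\mathtt{P}\mathtt{R}$ inside a coordinate rectangle $\{x\le \mathtt{R}_x,\ y\le 2\mathtt{R}_y\}$ (or its reflection), using the midpoint relation $\mathtt{P}+\mathtt{Q}=2\mathtt{R}$ and positivity of $\mathtt{Q}$ to get the doubling $\mathtt{P}_y<2\mathtt{R}_y$; on that rectangle $xy^{r-1}\le 2^{r-1}c\le 2c$. That monotonicity/quadrant argument is what replaces the concavity you tried to use.
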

\begin{proof}
Using a simple geometrical argument, it is enough to consider the case when the points  $\mathtt{P}$ and $\mathtt{R}$ lie on the curve $wv^{r-1}=c$ (the upper boundary of $\Omega_c^r$) and $\mathtt{Q}$ lies on the curve $wv^{r-1}=1$ (the lower boundary of $\Omega_c^r$). Then the line segment $\mathtt{R}\mathtt{Q}$ is contained within $\Omega_c^r$, and hence also within $\Omega_{2c}^r$, so  it is enough to ensure that the segment $\mathtt{P}\mathtt{R}$ is contained in $\Omega_{2c}^r$. Let $\mathtt{P}=(\mathtt{P}_x,\mathtt{P}_y)$, $\mathtt{Q}=(\mathtt{Q}_x,\mathtt{Q}_y)$ and $\mathtt{R}=(\mathtt{R}_x,\mathtt{R}_y)$. We consider two cases. If $\mathtt{P}_x<\mathtt{R}_x$, then 
$$ \mathtt{P}_y=2\mathtt{R}_y-\mathtt{Q}_y<2\mathtt{R}_y,$$
so the segment $\mathtt{P}\mathtt{R}$ is contained in the quadrant $\{(x,y):x\leq \mathtt{R}_x,\,y\leq 2\mathtt{R}_y\}$. Consequently, $\mathtt{P}\mathtt{R}$ lies below the hyperbola $xy^{r-1}=2^{r-1}c$ passing through $(\mathtt{R}_x,2\mathtt{R}_y)$, and hence also below the hyperbola $xy^{r-1}=2c$. This proves the assertion in the case $\mathtt{P}_x<\mathtt{R}_x$. In the case $\mathtt{P}_x\geq\mathtt{R}_x$ the reasoning is similar. Indeed, we check easily that the line segment $\mathtt{P}\mathtt{R}$ lies below the hyperbola $xy^{r-1}=2c$ passing through $(2\mathtt{R}_x,\mathtt{R}_y)$.
\end{proof}

We are ready to establish the inequalities of Theorem \ref{weights}. 

\begin{proof}[Proof of \eqref{mainin}] Let us start with introducing some auxiliary objects and notation. 
Let $(\mathcal{I}^n)_{n\geq 0}$ denote the dyadic filtration of $\mathfrak{I}$: given a nonnegative integer $n$, $\mathcal{I}^n$ denotes the $\sigma$-algebra generated by all dyadic intervals contained within $\mathfrak{I}$, which are of measure $2^{-n}$. Let $w$ be an $A_2$ weight with $c=[w]_{A_2}$ and let $\varphi$ be a function belonging to $L^2_w(\mathfrak{I})$. For any $x\in \mathfrak{I}$ and any nonnegative integer $n$, define
\begin{equation}\label{deffn}
 \varphi_n(x)=\langle \varphi\rangle_I,\quad \mathtt{w}_n(x)=\langle  w\rangle_I\quad \mbox{ and }\quad \mathtt{v}_n(x)=\langle w^{-1}\rangle_I,
\end{equation}
where $I=I(x)$ is the atom of $\mathcal{I}^n$ which contains $x$ (such an atom is unique for almost all $x\in\mathfrak{I}$, so the above equalities give functions which are well-defined on the subset of $\mathfrak{I}$ of full measure). Note that this definition of $\varphi_n$ is consistent with that given in the introductory section. Furthermore, define the truncated square function of $\varphi$ by
$$ S_n(\varphi)=S(\varphi_n),\qquad n=0,\,1,\,2,\,\ldots.$$

Now, let $B=B_{2c}$ be the function introduced in \S2.1 (note that we take the index $2c$). A crucial fact which exhibits the interplay between $B$ and the sequences $\varphi_n$, $S_n(\varphi)$, $\mathtt{w}_n$ and $\mathtt{v}_n$, is that
\begin{equation}\label{monot}
\begin{split}
&\int_\mathfrak{I} B(\varphi_n(s),S_n^2(\varphi)(s),\mathtt{w}_n(s),\mathtt{v}_n(s))\mbox{d}s\\
&\qquad \qquad \qquad \geq \int_\mathfrak{I} B(\varphi_{n+1}(s),S^2_{n+1}(\varphi)(s),\mathtt{w}_{n+1}(s),\mathtt{v}_{n+1}(s))\mbox{d}s
\end{split}
\end{equation}
for $n\geq 0$. To prove this estimate, fix an $n$ and pick an atom $I$ of $\mathcal{I}^n$. We will prove a slightly stronger statement than \eqref{monot}, in which $\mathfrak{I}$ is replaced by $I$, i.e.,
\begin{equation}\label{monot2}
\begin{split}
&\int_I B(\varphi_n(s),S_n^2(\varphi)(s),\mathtt{w}_n(s),\mathtt{v}_n(s))\mbox{d}s\\
&\qquad \qquad \qquad \geq \int_I B(\varphi_{n+1}(s),S^2_{n+1}(\varphi)(s),\mathtt{w}_{n+1}(s),\mathtt{v}_{n+1}(s))\mbox{d}s.
\end{split}
\end{equation}
Denote the left and right half of $I$ by $I^-$ and $I^+$, respectively. Then the functions $\varphi_n$, $S_n^2(\varphi)$, $\mathtt{w}_n$ and $\mathtt{v}_n$ are constant on $I$; let us denote the corresponding values by $x$, $y$, $w$ and $v$. Similarly, $\varphi_{n+1}$, $S_{n+1}^2(\varphi)$, $\mathtt{w}_{n+1}$ and $\mathtt{v}_{n+1}$ are constant on each of $I^-$, $I^+$: denote the appropriate values by $x_\pm$, $y_\pm$, $w_\pm$ and $v_\pm$. From the very definition of the sequences $(\varphi_n)_{n\geq 0}$,  $(\mathtt{w}_n)_{n\geq 0}$ and $(\mathtt{v}_n)_{n\geq 0}$, we infer that
$$ x=(x_-+x_+)/2,\quad w=(w_-+w_-)/2\quad \mbox{and}\quad v=(v_-+v_+)/2,$$
so there are $d,\,e,\,f\in \R$ such that
$$ x_\pm=x\pm d,\quad w_\pm=w\pm e\quad \mbox{and}\quad v_\pm=v\pm f.$$
In addition, by the very definition of $(S_n(\varphi))_{n\geq 0}$, we see that $ y_-=y_+=y+d^2.$ If we plug all these facts into \eqref{monot2} and divide both sides by $|I|/2$, we get an estimate which is equivalent to \eqref{conv}. Thus, \eqref{monot2} will be established if we show that the assumption of Lemma \ref{convlemma} is satisfied. However, the points
\begin{align*}
 (w_-,v_-)&=\left(\frac{1}{|I^-|}\int_{I^-}w,\frac{1}{|I^-|}\int_{I^-}w^{-1}\right),\\
(w_+,v_+)&=\left(\frac{1}{|I^+|}\int_{I^+}w,\frac{1}{|I^+|}\int_{I^+}w^{-1}\right),\\
(w,v)&=\frac{(w_-,v_-)+(w_+,v_+)}2=\left(\frac{1}{|I|}\int_{I}w,\frac{1}{|I|}\int_{I}w^{-1}\right)
\end{align*}
belong to $\Omega_c^2$, by the very definition of $A_2$ weights. Consequently, by Lemma \ref{geomlemma}, the line segment with endpoints $(w_-,v_-)$, $(w_+,v_+)$ is entirely contained in $\Omega^2_{2c}$, which is precisely the requirement of Lemma \ref{convlemma} (recall that the special function $B$ we use corresponds to the parameter $2c$). This yields \eqref{monot2}, and summing over all atoms $I$ of $\mathcal{I}^n$, we obtain \eqref{monot}. The remainder of the proof is straightforward. By induction, \eqref{monot} gives
\begin{equation}\label{induct}
 \int_\mathfrak{I} B(\varphi_n(s),S_n^2(\varphi)(s),\mathtt{w}_n(s),\mathtt{v}_n(s))\mbox{d}s \leq \int_\mathfrak{I} B(\varphi_0(s),S_0^2(\varphi)(s),\mathtt{w}_0(s),\mathtt{v}_0(s))\mbox{d}s.
\end{equation}
However, we have
$$ \varphi_0(s)=\frac{1}{|\mathfrak{I}|}\int_\mathfrak{I} \varphi,\quad\mbox{and}\quad  S_0(\varphi)=\left|\frac{1}{|\mathfrak{I}|}\int_\mathfrak{I} \varphi\right|,$$
so \eqref{init} implies that the right-hand side of \eqref{induct} is nonpositive. To deal with the left-hand side, we exploit \eqref{maj}. As the result, we obtain the estimate
$$  \int_\mathfrak{I} (\varphi_n(s))^2\mathtt{w}_n(s)\mbox{d}s\leq 160 c\int_\mathfrak{I} S_n^2(\varphi)(s)\mathtt{w}_n(s)\mbox{d}s,$$
which, by the very definition of $\mathtt{w}_n$, implies
$$  \int_\mathfrak{I} (\varphi_n(s))^2w(s)\mbox{d}s\leq 160 c\int_\mathfrak{I} S_n^2(\varphi)(s)w(s)\mbox{d}s.$$
However, if we let $n$ go to infinity, then $S_n(\varphi)\uparrow S(\varphi)$ and $\varphi_n\to \varphi$ almost surely, by Lebesgue's differentiation theorem. Consequently, Fatou's lemma and Lebesgue's monotone convergence theorem imply
$$ ||\varphi||_{L^2_w(\mathfrak{I})}\leq (160c)^{1/2}||S(\varphi)||_{L^2_w(\mathfrak{I})},$$
which is the desired claim.
\end{proof}

\begin{proof}[Proof of \eqref{mainin6}] The arguments go along the same lines as above. We omit the straightforward repetitions.
\end{proof}

\begin{proof}[Proof of \eqref{mainin2}]
Here the reasoning is essentially the same as above, but we have decided to include some details due to the appearance of the $A_r$ weights. Suppose that $w$ is an $A_2$ weight. Then, by Lemma \ref{CFlemma}, $w$ is an $A_r$ weight for some $r<2$. Let $c=[w]_{A_r}$ and assume that $B=B_{2c,r}$ is the function introduced in \S2.2 (again, note that we use the doubled index $2c$). Let $\varphi$ be a function belonging to $L^2_w(\mathfrak{I})$ and define $(\varphi_n)_{n\geq 0}$, $(S_n(\varphi))_{n\geq 0}$, $(\mathtt{w}_n)_{n\geq 0}$ as previously. The corresponding sequence $(\mathtt{v}_n)_{n\geq 0}$ is slightly different, as it captures the fact that $w$ is an $A_r$ weight: let
$$ \mathtt{v}_n(x)=\langle w^{-1/(r-1)}\rangle_{I(x)}$$
(recall that $I(x)$ is the element of $\mathcal{I}^n$ which contains the point $x$).
By Muckenhoupt's condition $A_r$, we see that the sequence $((\mathtt{w}_n,\mathtt{v}_n))_{n\geq 0}$ is $\Omega_c^r$-valued. Therefore, repeating the arguments from the preceding proof, we get that
$$
 \int_\mathfrak{I} B(\varphi_n(s),S_n^2(\varphi)(s),\mathtt{w}_n(s),\mathtt{v}_n(s))\mbox{d}s \leq \int_\mathfrak{I} B(\varphi_0(s),S_0^2(\varphi)(s),\mathtt{w}_0(s),\mathtt{v}_0(s))\mbox{d}s
$$
for any $n\geq 0$. Consequently, by \eqref{init2} and \eqref{maj2}, we get
$$  \int_\mathfrak{I} S_n^2(\varphi)\mathtt{w}_n(s)\mbox{d}s\leq \frac{2rc}{2-r}\int_\mathfrak{I} (\varphi_n(s))^2\mathtt{w}_n(s)\mbox{d}s$$
and hence
$$  \int_\mathfrak{I} S_n^2(\varphi)w(s)\mbox{d}s\leq \frac{2rc}{2-r}\int_\mathfrak{I} (\varphi_n(s))^2w(s)\mbox{d}s.$$
If we let $n\to \infty$, the left-hand side converges to $||S(\varphi)||_{L^2_w(\mathfrak{I})}^2$, by Lebesgue's monotone convergence theorem. To deal with the right-hand side, recall that $\varphi$ belongs to $L^2_w(\mathfrak{I})$ and hence, by Muckenhoupt's inequality, so does the dyadic maximal function $\mathcal{M}_d\varphi$. Therefore, by Lebesgue's differentiation and dominated converge theorems, we see that 
$$ \int_\mathfrak{I} (\varphi_n(s))^2w(s)\mbox{d}s\xrightarrow{n\to \infty}||\varphi||_{L^2_w(\mathfrak{I})}^2.$$
This gives the claim.
\end{proof}

\section{Inequalities for continuous time martingales}\label{mart-wights}

%\subsection{More general martingale inequalities}\label{mart-wights} 
All the results studied above have their counterparts in the martingale theory, as we will prove now. Suppose that $(\Omega,\F,\mathbb{P})$ is a complete probability space, equipped with a filtration $(\F_t)_{t\geq 0}$, i.e., a nondecreasing sequence of sub-$\sigma$-algebras of $\F$. Assume further that $\F_0$ contains all the events of probability $0$. Let $X=(X_t)_{t\geq 0}$ be an adapted, uniformly integrable  continuous-path martingale and let $\langle X \rangle=(\langle X_t\rangle)_{t\geq 0}$ stand for its quadratic covariance process (square bracket). See e.g. Dellacherie and Meyer \cite{DM} for the detailed exposition and properties of this object. Then $X$ and ${\langle X \rangle}^{1/2}$ are the probabilistic versions of the function $\varphi$ and its square function $S(\varphi)$. To introduce the appropriate analogue of $A_p$ weights, assume that $Y$ is a nonnegative, uniformly integrable martingale with continuous trajectories, satisfying $Y_0=\E[Y_{\infty}]=1$. (Note that this normalization is not an essential assumption as multiplying by a constant does not affect the $A_p$ condition.) 
%For example, one can take an exponential martingale $\mathcal{E}(B)^\tau=\big(\exp\big[B_{\tau\wedge t}-\frac{1}{2}(\tau\wedge t)\big]\big)_{t\geq 0}$ corresponding to an adapted, standard Brownian motion $B$ and some stopping time $\tau$. 
 Following Izumisawa and Kazamaki \cite{IK}, we say that $Y$ satisfies Muckenhoupt's condition $A_p(mart)$ (where $1<p<\infty$ is a fixed parameter), if
 
\begin{equation}\label{probAp}
\|Y\|_{A_{p}(\text{mart})}:=\sup_{t>0}\Big\|Y_{t}\left(\E\big[\left(\frac{1}{Y_{\infty}}\right)^{1/(p-1)}\big|\calF_{t}\big]\right)^{p-1}\Big\|_{\infty}<\infty,
\end{equation}
where $Y_t=\E\left[Y_{\infty}\, |\calF_{t}\right]$. 
%$$
%||Y||_{A_r}:=\sup_\tau\left|\left|\E \Big[\big\{Y_\tau/Y_\infty\big\}^{1/(r-1)} \big| \F_\tau\Big]^{r-1}\right|\right|_\infty<\infty,
%$$
 Any process $Y$ as above gives rise to the probability measure $\mathbb{Q}$ defined by the equation $\mbox{d}\mathbb{Q}=Y_\infty\mbox{d}\mathbb{P}$, and thus it can be regarded as a weight. Put $Z_t=\E \big\{Y_\infty^{-1/(p-1)}\,\big| \F_t\big\}$ for $t\geq 0$. If $||Y||_{A_p}\leq c$, then we have $Y_tZ_t^{p-1}\leq c$ for all $t$, that is, the process $(Y,Z)$ takes values in the set $\Omega_c^p$. This process is precisely the martingale analogue of the sequence $((\mathtt{w}_n,\mathtt{v}_n))_{n\geq 0}$ studied in Section 3 above.

The analogue of Theorem \ref{weights} is the following.

\begin{thm}\label{contmart} 
Suppose that $Y$ is an $A_2$ weight and $X$ is a martingale bounded in $L^2(\mathbb{Q})$. Then we have the estimates
\begin{equation}\label{mainin3}
||X_\infty||_{L^2(\mathbb{Q})}\leq \large\left(80\|Y\|_{A_{2}(\text{mart})}\large\right)^{1/2}||{\langle X \rangle}^{1/2}_\infty||_{L^2(\mathbb{Q})}
\end{equation}
and
\begin{equation}\label{mainin3.5}
||{\langle X \rangle}^{1/2}_\infty||_{L^2(\mathbb{Q})}\leq 4\sqrt{2}\|Y\|_{A_{2}(\text{mart})}||X_\infty||_{L^2(\mathbb{Q})}.
\end{equation}
Furthermore, 
\begin{equation}\label{mainin4}
 ||{\langle X \rangle}^{1/2}||_{L^2(\mathbb{Q})}\leq \inf_{1<r<2}\left(\frac{r}{2-r}\|Y\|_{A_{r}(\text{mart})}\right)^{1/2}||X_\infty||_{L^2(\mathbb{Q})}.
\end{equation}
\end{thm}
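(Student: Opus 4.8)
The plan is to establish Theorem~\ref{contmart} by transferring the three discrete estimates of Theorem~\ref{weights} to the continuous-time setting via the Bellman functions already constructed in \S2, replacing the dyadic martingale structure by It\^o calculus. For each of the three inequalities, the scheme is identical: one takes the appropriate special function $B$ (with a suitably doubled parameter, for the same geometric reason as in \S3), forms the process $t\mapsto \E\, B(X_t,\langle X\rangle_t, Y_t, Z_t)$, and shows that it is a supermartingale; then one compares its value at $t=0$ (controlled by the initial majorization, e.g. \eqref{init}, \eqref{init2}, or the analogous bound in \S\ref{alternative}) with its value at $t=\infty$ (bounded below by the linear majorization, e.g. \eqref{maj}, \eqref{maj2}). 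The factor-of-$2$ improvement in the constants relative to Theorem~\ref{weights} (e.g. $80$ versus $160$, $4\sqrt2$ versus $8\sqrt2$) comes precisely from the fact that in the discrete case one is forced to pass to the parameter $2c=2[w]_{A_2}$ because of Lemma~\ref{geomlemma}, whereas in the continuous case the process $(Y,Z)$ moves continuously and stays inside $\Omega_c^p$ with $c=\|Y\|_{A_p(\text{mart})}$ itself, so no doubling is needed.

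The key steps, carried out for \eqref{mainin4} (the other two are entirely parallel), are as follows. First I would fix $r\in(1,2)$, note that by hypothesis $Y$ is an $A_2$ weight; but here, unlike the dyadic proof, one does not even need Lemma~\ref{CFlemma}: the statement \eqref{mainin4} is phrased directly in terms of $\|Y\|_{A_r(\text{mart})}$ and one simply assumes this quantity is finite, sets $c=\|Y\|_{A_r(\text{mart})}$, and takes $B=B_{c,r}$ from \S2.2, so that $(Y,Z)$ with $Z_t=\E[Y_\infty^{-1/(r-1)}\mid\F_t]$ is $\Omega_c^r$-valued. Second, I would apply It\^o's formula to $B(X_t,\langle X\rangle_t,Y_t,Z_t)$. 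The three ``space'' variables $X$, $Y$, $Z$ are continuous martingales (hence have finite quadratic variation), while $\langle X\rangle$ is the finite-variation clock; the drift produced by It\^o's formula is exactly
\begin{equation*}
\tfrac12\,(D^2_{(x,w,v)}b(X_t,Y_t,Z_t)\,\mathrm{d}M_t,\mathrm{d}M_t)+\partial_y B\cdot \mathrm{d}\langle X\rangle_t,
\end{equation*}
where $M=(X,Y,Z)$ and $b$ is the ``$y$-free'' part of $B$; by the very construction in Lemma~\ref{convlemma2} this combination equals $\tfrac12(\mathbb{A}\,\mathrm{d}M_t,\mathrm{d}M_t)\leq 0$ because $\mathbb{A}$ was shown to be nonpositive-definite and $\mathrm{d}\langle X,Y\rangle_t$, $\mathrm{d}\langle X,Z\rangle_t$ contribute through the same quadratic form (the cross term $\partial_y B\,\mathrm{d}\langle X\rangle_t=Y_t\,\mathrm{d}\langle X\rangle_t$ is precisely the $+2w$ perturbation in the top-left corner of $\mathbb{A}$, after accounting for the coefficient $\tfrac{rc}{2-r}$). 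Hence $\E\,B(X_t,\langle X\rangle_t,Y_t,Z_t)$ is nonincreasing in $t$. Third, using \eqref{init2} at $t=0$ (where $\langle X\rangle_0=X_0^2$ need not hold, so one should instead start the comparison from a localized/stopped version and use that $\langle X\rangle_0=0\le X_0^2$, giving $\E\,B(X_0,\langle X\rangle_0,Y_0,Z_0)=\E[X_0^2Y_0]-\tfrac{rc}{2-r}\E[X_0^2 Z_0^{1-r}]\le 0$ by the $\Omega_c^r$ bound) together with \eqref{maj2} evaluated at $t=\infty$, one obtains
\begin{equation*}
\E\big[\langle X\rangle_\infty Y_\infty\big]\le \frac{rc}{2-r}\,\E\big[X_\infty^2 Y_\infty\big],
\end{equation*}
which upon rewriting in terms of $\mathbb{Q}$ and taking the infimum over $r$ is exactly \eqref{mainin4}.

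I expect the main obstacle to be the standard but delicate integrability/localization bookkeeping: one must justify that the local-martingale part in the It\^o expansion is a genuine martingale (or at least that its expectation vanishes after stopping along a localizing sequence $\tau_k\uparrow\infty$), control the behavior at $t=0$ when $\langle X\rangle_0\neq X_0^2$, and then pass to the limit $t\to\infty$ and $k\to\infty$ using that $X$ is bounded in $L^2(\mathbb{Q})$, that $Y$ is uniformly integrable, and Muckenhoupt's (martingale) maximal inequality to dominate $\sup_t X_t^2$ in $L^1(\mathbb{Q})$ — the continuous-time analogue of the dyadic maximal-function argument used at the end of the proof of \eqref{mainin2}. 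A secondary technical point is that $B_{c,r}$ and its derivatives blow up as $v\to0$ or on the boundary $wv^{r-1}\in\{1,c\}$, so one should either smooth $B$ slightly or run the argument on the process stopped before it exits a compact subset of the interior of $\Omega_c^r$ and then remove the stopping by monotone convergence. Once these routine continuity-theory matters are handled, the inequality falls out of the algebraic properties of $B$ already verified in \S2, with the cleaner constants as noted above.
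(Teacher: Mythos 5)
Your proposal follows essentially the same route as the paper's proof: apply It\^o's formula to $B(X_t,\langle X\rangle_t,Y_t,Z_t)$ with the Bellman functions of \S 2, observe that the drift term is $\tfrac12(\mathbb{A}\,\mathrm{d}M,\mathrm{d}M)\leq0$ by the nonpositive-definiteness established in Lemmas \ref{convlemma} and \ref{convlemma2}, then sandwich with the boundary majorizations \eqref{init}/\eqref{init2} and \eqref{maj}/\eqref{maj2}. You correctly identify why the constants improve over Theorem \ref{weights} (no need for Lemma \ref{geomlemma} since $(Y_t,Z_t)$ moves continuously within $\Omega_c^r$), and your localization and regularization remarks address the same integrability issues that the paper handles by its approximation comment, so no gap is present.
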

Note that the constants are slightly better: this is due to the fact that we will not require Lemma \ref{geomlemma}, or any probabilistic counterpart of that statement (see the beginning of the proof below).
\begin{proof}
We will focus on \eqref{mainin3}, the reasoning leading to \eqref{mainin4} is essentially the same. Let $c=\|Y\|_{_{A_{2}(\text{mart})}}$ and let $B=B_c$ be the function of Subsection \S2.1. (Note that in contrast with the analytic setting, here we use the function $B_c$, not $B_{2c}$. This will give the aforementioned improvement of the constants). The function $B$ is of class $C^\infty$; actually, it can be extended to a $C^\infty$ function on a certain open set containing $\R\times [0,\infty)\times \Omega_c^2$. Furthermore, by the probabilistic $A_2$ condition, we see that the process $S=(X, \langle X \rangle,Y,Z)$ takes values in the domain of $B$. Thus, an application of It\^o's formula gives that for each $t\geq 0$,
\begin{equation}\label{ito}
 B(S_t)=B(S_0)+I_1+I_2+I_3/2,
\end{equation}
where
\begin{align*}
I_1&=\int_{0+}^t B_x(S_s)\mbox{d}X_s+\int_{0+}^t B_w(S_s)\mbox{d}Y_s+\int_{0+}^t B_v(S_s)\mbox{d}Z_s,\\
I_2&=\int_{0+}^t B_y(S_s)\mbox{d}\langle X \rangle_s,\\
I_3&=\int_{0+}^t D^2_{x,w,v}B(S_s)\cdot \mbox{d}\langle(X,Y,Z),(X,Y,Z)\rangle_s.
\end{align*}
Here 
$$ D^2_{x,w,v}B(x,y,w,v)=\left[ \begin{array}{ccc}
B_{xx} & B_{xw} & B_{xv}\\
B_{xw} & B_{ww} & B_{wv}\\
B_{xv} & B_{wv} & B_{vv}
\end{array}\right]$$
and the integral $I_3$ is actually a sum of the integrals
$$ \int_{0+}^t B_{xx}(S_s)\mbox{d}\langle X \rangle_s,\,\,2\int_{0+}^t B_{xw}(S_s)\mbox{d}\langle X, Y \rangle_s,\,\,2\int_{0+}^t B_{xv}(S_s)\mbox{d}\langle X,Z\rangle_s,\ldots$$
and so on. By the properties of stochastic integrals, we see that the term $I_1$ has expectation $0$. Furthermore, the sum $I_2+I_3$ is nonpositive: this follows directly from some standard approximation and the fact that the matrix $\mathbb{A}$, introduced in \eqref{D^2}, is nonpositive-definite. Consequently, integrating both sides of \eqref{ito} gives $ \E B(S_t)\leq \E B(S_0).$ However, $\langle X \rangle_0=|X_0|^2$, so \eqref{init} implies $B(S_0)\leq 0$; furthermore, by \eqref{maj}, we have 
$$ B(S_t)\geq \frac{Y_t}{2}(X_t^2-80c\langle X \rangle_t).$$
Combining these facts, we get the estimate
$$ \E X_t^2Y_t\leq 80c\E \langle X \rangle_tY_t,$$
which, by the martingale property of $Y$, implies
$$ \E X_t^2Y_\infty\leq 80c\E \langle X \rangle_tY_\infty\leq 80c\E \langle X \rangle_\infty Y_\infty.$$
This clearly implies \eqref{mainin3}, in view of Fatou's lemma. As we have mentioned above, the proofs of \eqref{mainin3.5} and \eqref{mainin4} are similar, so we leave them to the interested reader.
\end{proof}

The estimate \eqref{mainin4} leads to the following improvement of \eqref{mainin3.5}.

\begin{corollary}\label{maincor1} 
Suppose that $Y$ is an $A_2$ weight and $X$ is a martingale bounded in $L^2(\mathbb{Q})$. Then we have the estimate
\begin{equation}\label{mainin5}
||\langle X \rangle^{1/2}||_{L^2(\mathbb{Q})}\leq 2^{7/4}\|Y\|_{A_{2}(\text{mart})}||X_\infty||_{L^2(\mathbb{Q})}.
\end{equation}
\end{corollary}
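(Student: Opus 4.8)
The plan is to deduce \eqref{mainin5} from \eqref{mainin4} by optimizing the quantity $\frac{r}{2-r}\|Y\|_{A_r(\text{mart})}$ over $r\in(1,2)$, using the self-improvement property of Muckenhoupt weights. First I would record the martingale analogue of Lemma \ref{CFlemma}: if $Y$ is an $A_2(\text{mart})$ weight, then $Y$ is an $A_{2-\e}(\text{mart})$ weight for $\e=\kappa_2^{-1}\|Y\|_{A_2(\text{mart})}^{-1}$, with $\|Y\|_{A_{2-\e}(\text{mart})}\leq \kappa_2\|Y\|_{A_2(\text{mart})}$; here $\kappa_2$ is the constant from the reverse-H\"older/self-improvement estimate for martingale $A_p$ weights (this is the probabilistic Coifman--Fefferman lemma, which in the continuous-path martingale setting follows by the same reasoning as in the analytic case). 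The point of \eqref{mainin4} is that it converts this self-improvement into a genuinely linear bound: plugging $r=2-\e$ into \eqref{mainin4} gives $\frac{r}{2-r}\leq \frac{2}{\e}=2\kappa_2\|Y\|_{A_2(\text{mart})}$, and combined with $\|Y\|_{A_{2-\e}(\text{mart})}\leq \kappa_2\|Y\|_{A_2(\text{mart})}$ one obtains
$$ \|\langle X\rangle^{1/2}\|_{L^2(\mathbb{Q})}\leq \bigl(2\kappa_2^2\bigr)^{1/2}\|Y\|_{A_2(\text{mart})}\|X_\infty\|_{L^2(\mathbb{Q})},$$
which is linear in the characteristic. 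To land the explicit constant $2^{7/4}$ one needs the sharp value $\kappa_2=\sqrt2$ in the martingale self-improvement lemma (so that $2\kappa_2^2 = 4$... but the stated constant is $2^{7/4}=2\cdot 2^{3/4}$, so the actual optimization is more delicate — see below).

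A cleaner route, which I would actually pursue, avoids committing to a particular $\kappa_2$ and instead works directly with the infimum in \eqref{mainin4} together with the elementary estimate relating $\|Y\|_{A_r(\text{mart})}$ and $\|Y\|_{A_2(\text{mart})}$ for $r$ close to $2$. The key sharp input is that for $1<r<2$,
$$ \|Y\|_{A_r(\text{mart})}\leq \|Y\|_{A_2(\text{mart})}$$
fails in general, but a quantitative reverse-H\"older inequality for the density governing $Y$ yields, for $r=2-\e$ with $\e\le \e_0(\|Y\|_{A_2(\text{mart})})$, a bound of the form $\|Y\|_{A_r(\text{mart})}\le C\|Y\|_{A_2(\text{mart})}$. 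Choosing $\e$ proportional to $\|Y\|_{A_2(\text{mart})}^{-1}$ and then optimizing the product $\frac{r}{2-r}\|Y\|_{A_r(\text{mart})}$ numerically over the admissible range produces the factor whose square root is $2^{7/4}$. So the proof is: (i) state and invoke the martingale Coifman--Fefferman self-improvement, with the best available constant; (ii) substitute the resulting $r$ into \eqref{mainin4}; (iii) bound $\frac{r}{2-r}$ by $2/\e$ and $\|Y\|_{A_r(\text{mart})}$ by the self-improvement constant times $\|Y\|_{A_2(\text{mart})}$; (iv) take square roots. Everything after (i) is a one-line computation.

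The main obstacle is step (i): one needs a version of Lemma \ref{CFlemma} valid for continuous-path martingale $A_p$ weights, with an explicit constant good enough to yield $2^{7/4}$. In the analytic setting this is classical, but in the probabilistic setting one must either cite an existing martingale reverse-H\"older inequality (e.g.\ from Izumisawa--Kazamaki \cite{IK} or later work in the martingale-$A_p$ literature) or sketch its proof via the Bellman-function / distributional argument for the exponential of the relevant logarithm. Assuming such a lemma with constant $\kappa_2$, the delicate point is checking that the numerical optimization of $\inf_r \frac{r}{2-r}\|Y\|_{A_r(\text{mart})}$ indeed closes at $\bigl(2^{7/4}\|Y\|_{A_2(\text{mart})}\bigr)^2$; this is a calculus exercise in $\e$ that I would carry out explicitly, balancing the blow-up of $\frac{2}{\e}$ against the growth of the self-improvement constant as $\e\to\e_0$. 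Once that balance is pinned down, the corollary follows immediately from \eqref{mainin4}.
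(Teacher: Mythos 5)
Your strategic outline is the same as the paper's: choose $r = 2-\e$ with $\e$ comparable to $\|Y\|_{A_2(\text{mart})}^{-1}$, bound $\tfrac{r}{2-r}$ and $\|Y\|_{A_r(\text{mart})}$ separately, and substitute into \eqref{mainin4}. Indeed, the paper picks $r = 2-(2c)^{-1}$ with $c=\|Y\|_{A_2(\text{mart})}$, from which $\tfrac{r}{2-r}\le 4c$, and shows $\|Y\|_{A_r(\text{mart})}\le 2\sqrt{2}\,c$; multiplying gives $8\sqrt{2}\,c^2 = 2^{7/2}c^2$ and taking the square root yields exactly $2^{7/4}c$. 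So if your ``step (i)'' were in hand, the rest really is a one-line computation as you say.

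The genuine gap is precisely step (i), and you identify it yourself but do not resolve it. You invoke a ``martingale Coifman--Fefferman self-improvement lemma with constant $\kappa_2$'' and then acknowledge you cannot pin down what $\kappa_2$ is, proposing instead to ``optimize numerically over the admissible range''. That optimization cannot be done without knowing the functional form of the self-improvement constant; you would just be rederiving some $C\cdot \|Y\|_{A_2(\text{mart})}$ with an unspecified $C$, not the explicit $2^{7/4}$. The paper closes this gap with two concrete inputs that are absent from your sketch: Uchiyama's identity \eqref{uch}, which equates $\|Y\|_{A_r(\text{mart})}$ with a sharp weak-type $(r,r)$ constant for martingale maximal functions, and the explicit bound from Os\c ekowski \cite{OT} stating that this weak-type constant is at most
$$
\left[\left(1-\frac{\sqrt{1-c^{-1}}}{r-1}\right)^{r-1}\left(1+\sqrt{1-c^{-1}}\right)\right]^{-1/r}
$$
whenever $1+\sqrt{1-c^{-1}}<r\le 2$. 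Verifying that $r=2-(2c)^{-1}$ lies in this range, and then the elementary estimates $1-\tfrac{\sqrt{1-c^{-1}}}{1-(2c)^{-1}}\ge \tfrac{1}{8c^2}$ and $(8c^2)^{(1-(2c)^{-1})/(2-(2c)^{-1})}\le 2\sqrt{2}\,c$, is what produces the clean $\|Y\|_{A_r(\text{mart})}\le 2\sqrt{2}\,c$. Without identifying these specific tools (or an equivalent explicit quantitative lemma), your proof sketch cannot reach the claimed constant; you should replace the vague appeal to a generic $\kappa_2$-lemma and unspecified numerical optimization with the Uchiyama--Os\c ekowski chain, or supply an explicit reverse-H\"older estimate of comparable precision.
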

\begin{proof}
We will need an appropriate probabilistic version of Coifman-Fefferman $A_{p-\e}$-result (see Lemma \ref{CFlemma} above). As shown by Uchiyama \cite{Uch}, we have the identity
\begin{equation}\label{uch}
||Y||_{A_{r}(\text{mart})}= \sup \left[\lambda^r\mathbb{Q}(\xi^*>\lambda)/||\xi_\infty||^r_{L^r(\mathbb{Q})}\right],
\end{equation}
where the supremum is taken over all $\lambda>0$ and all adapted cadlag martingales $\xi=(\xi_t)_{t\geq 0}$. On the other hand, it follows from the results of Os\c ekowski \cite{OT} that if $Y$ is an $A_2$ weight satisfying $||Y||_{A_2}=c$, then the expression on the right of \eqref{uch} does not exceed
$$ \left[\left(1-\frac{\sqrt{1-c^{-1}}}{r-1}\right)^{r-1}\left(1+\sqrt{1-c^{-1}}\right)\right]^{-1/r},$$
provided $1+\sqrt{1-c^{-1}}<r\leq 2$. As one easily verifies, the latter double bound is satisfied by $r=2-(2c)^{-1}$, and then \eqref{uch} implies
$$ ||Y||_{A_r}\leq \left[\left(1-\frac{\sqrt{1-c^{-1}}}{1-(2c)^{-1}}\right)^{1-(2c)^{-1}}\left(1+\sqrt{1-c^{-1}}\right)\right]^{-1/(2-(2c)^{-1})}.$$
However, $1+\sqrt{1-c^{-1}}\geq 1$ and
$$ 1-\frac{\sqrt{1-c^{-1}}}{1-(2c)^{-1}}=\frac{(4c^2)^{-1}}{(1-(2c)^{-1})(1-(2c)^{-1}+\sqrt{1-c^{-1}})}\geq \frac{1}{8c^2}.$$
Consequently, we obtain the upper bound
$$ ||Y||_{A_r}\leq (8c^2)^{(1-(2c)^{-1})/(2-(2c)^{-1})}\leq (8c^2)^{1/2}=2\sqrt{2}c.$$
Furthermore, we have
$$ \frac{r}{2-r}=\frac{2-(2c)^{-1}}{(2c)^{-1}}\leq 4c.$$
Plugging the above two estimates into \eqref{mainin4} gives the claim.
\end{proof}

\section{Littlewood-Paley square functions and the Lusin area integral}\label{L-P}

Our goal in  this section is to prove versions of Theorems \ref{weights} and \ref{contmart}  for the Littlewood-Paley square functions $g_{*}$ and $G_{*}$  for harmonic and parabolic functions, respectively. These operators arise  as conditional expectations of square functions of martingales obtained by composing harmonic functions in the upper half-space with Brownian motion and from martingales obtained by composing solutions of the heat equation with space-time Brownian motion.  These constructions are quite general and apply in the wide setting of general symmetric Markovian semigroups and their Poisson semigroups obtained by Bochner's 1/2-subordination.   

Before proceeding further, we mention here that there is a vast literature on weighted $L^p$ inequalities for the classical  Littlewood-Paley square functions and their many variants.  These include estimates with the sharp dependence on the characteristics $[w]_{A_p}$ of the weight.  For some of this literature we refer the reader to A. Lerner \cite{Le1, Le2, Le3} and the many references given in those papers.  Our inequalities below provide information not only on the $[w]_{A_2}$ dependence but also give $L^2$ bounds with very explicit constants.  This raises questions of obtaining sharp bounds on wighted norm inequalities for classical operators not only with respect to $[w]_{A_p}$ but also with respect to  $p$ when the weights are the probabilistic Poisson or heat (and even more general symmetric Markovian semigroup) as defined below.  Of particular interest would be the case of the Hilbert transform, first and second order Riesz transforms, and the Beurling-Ahlfors operator.  A first step in these problems would be to obtain such sharp weighted norm inequalities for martingale transforms of stochastic integrals with the probabilistic weights as defined by \eqref{probAp}.    Unfortunately,  as of now we have not been able to make progress on these problems.    

\subsection{Littlewood-Paley and Lusin square functions for harmonic functions}\label{harm-squarefRn}

In this section we will derive versions of Theorems \ref{weights} and \ref{contmart}  for the Littlewood-Paley $g_{*}$ function for harmonic and  parabolic functions.  This function dominates,  pointwise, the classical Littlewood-Paley $g$ function and the Lusin area integral. We  will use the standard construction of $g_{*}$ as the conditional expectation of the martingale square function.  For this construction we refer the reader to \cite{Ban1} which is a special case of the more general Poisson semigroup construction presented in Varopoulos \cite{Var1}.  (See also Meyer \cite{Mey1, Mey2, Mey3}.) 

For any $f\in L^p(\R^n)$, $1<p<\infty$, we will denote by $K_y(f)(x)$ its harmonic extension to the upper half-space $\R_+^{n+1}=\{ (x,y) \colon x\in\R^n,\ y > 0 \}$ obtained by convolving $f$ with the  Poisson kernel 
\begin{equation}\label{poisson}
 k_{y}(x) = \frac{c_n\, y}{(y^2+|x|^2)^{(n+1)/2}}, \,\,\,\,\,\,\,  c_n=\frac{\Gamma(\frac{n+1}{2})}{\pi^{\frac{n+1}{2}}}, 
 \end{equation}
where $c_n$ is chosen so that $k_y(x)$ has integral 1 for all $y>0$.  The cone in  $\R^{n+1}_{+}$ with vertex at $x$ and aperture $\alpha >0$ is defined by 
$$
\Gamma_{\alpha}(x)=\{(z, y): |z-x|<\alpha y, z\in \R^n, y>0\}.
$$

The   Littlewood-Paley functions $g(f)$, $g_{*}(f)$,  and  Liusin area integral $A_{\alpha}(f)$ are defined, respectively, by 
\begin{equation}\label{LP-1}
g(f)(x) = \left( \int_0^\infty  y |\nabla K_y(f)(x)|^2 \, dy\right)^{1/2}, 
\end{equation} 
\begin{equation} \label{LP-2}
g_{*}(f)(x)=\left(\int_0^{\infty}\int_{\R^d}y\,k_y(x-z) |\nabla K_y(f)(z)|^2 \, dz\, dy\right)^{1/2}
\end{equation}
and
\begin{equation} \label{LA} 
A_{\alpha}(f)(x)=\left(\int_{\Gamma_{\alpha}(x)}y^{{1-n}}|\nabla K_y(f)(x)|^2 dz dy\right)^{1/2},
\end{equation} 
where for any $u$ in the upper half-space, 
$$\nabla u= \left( \tfrac{\partial u}{\partial x_1},\
  \tfrac{\partial u}{\partial x_2}, \ldots, \tfrac{\partial u}{\partial x}, \tfrac{\partial u}{\partial y}\right)$$
is the full  gradient. 

As in the case of the dyadic square function, the inequalities \eqref{sq1} and \eqref{sq2} hold for all $f\in L^p(\R^n)$, for $1<p<\infty$, for both  $g$ and $A_{\alpha}$.  For $g_{*}$  we have  $\|g_{*}(f)\|_p\leq c_p\|f\|_p$ for $2\leq p<\infty$ and it is well-known that the inequality fails for $1<p<2$; see \cite{Ben} for an explicit example.   We refer the reader to Stein \cite{Ste1} for these classical results and where it is also shown that there are constants $C_{\alpha, n}^1$, $C_{\alpha, n}^2$, depending only on $\alpha$ and $n$,  such that 
\begin{equation}\label{pointwise}
g(f)(x)\leq C_{\alpha, n}^1A_{\alpha}(f)(x)\leq C_{\alpha, n}^2g_{*}(f)(x), \, \, \text{for all}\,\, \, x\in \R^n. 
\end{equation}
In fact, the second inequality is trivial since 
$$y^{-n}\leq \frac{(\alpha^2+1)^{\frac{n+1}{2}}}{c_n}k_y(z-x), \,\,\,  (z, y)\in \Gamma_{\alpha}(x),$$
where $c_n$  is the constant in \eqref{poisson}.  This gives 
\begin{equation}\label{pointwiseAbyg*} 
A_{\alpha}(f)(x)\leq \frac{(\alpha^2+1)^{\frac{n+1}{4}}}{\sqrt{c_n}}g_{*}(f)(x). 
\end{equation} 
Also, the semigroup property of $K_y$ gives directly (see also the proof of \eqref{pointwiseGbyG*} below) that 
\begin{equation}\label{pointwisegbyg*} 
g(f)(x)\leq 2g_{*}f(x).  
\end{equation} 
%The previous inequality is proved in \cite{Var1}. (See also the proof of \eqref{pointwiseGbyG*} below.)  We note, however, that the inequality as stated in \cite{Var1} with 1 instead of 2 is not correct as can be seen from the proof given on page 44 of that paper.  Incidentally, the inequality on page 44 of \cite{Var1} which proves this fact should be reversed.  

 An important property  for our purpose in this paper  is the fact that $g_{*}(f)$ can be expressed as the  conditional expectation of the quadratic variation (square function) of the martingale obtained by composing the harmonic function $K_yf(x)$ with the Brownian motion in $\R^{n+1}_{+}$.  Let us explain this further. Let $B_t= (X_t, Y_t)$, $t\geq 0$, be Brownian motion  in $\R^{n+1}_{+}$ starting at the point $(z, a)$ so that  $(X_t)_{t\geq 0}$ is an $n$-dimensional Brownian motion and $(Y_t)_{t\geq 0}$ is a one-dimensional Brownian motion.  We denote the corresponding starting probability measure and expectation by $\p_{(z, a)}$ and $\E_{(z, a)}$, respectively.    Let $\tau$ be its exit time from $\R^{n+1}_{+}$ so that $\tau=\inf\{s>0: Y_s=0\}$. Since the density of the distribution of the random variable $X_\tau$ under the probability measure $\p_{(z, a)}$ is given by the Poisson kernel $k_a(z-x)$, Fubini's theorem gives that for nonnegative (or integrable) functions $F$ on $\R^n$ we have 

\begin{equation}\label{uniform}
\int_{\R^n}\E_{(z, a)}F(X_\tau) dz=\int_{\R^n} \left(\int_{\R^n} k_a(z-x)F(x) dx\right)dz=\int_{\R^n} F(x) dx, 
\end{equation}
for all $a>0$.  This simple formula is used below multiple times to convert integrals over $\R^n$ with respect to the Lebesgue measure to expectations which then permits the application of martingale inequalities.  
 
We now consider the martingale $M(f)_t=K_{Y_{ \tau\wedge t}}(f)(X_{\tau\wedge t})$, $t\geq 0$. By the It\^o formula,  
$$
M(f)_t=K_af(z)+\int_{0}^{\tau\wedge t} \nabla K_{Y_s}(f)(X_s)\cdot dB_s
$$
and its quadratic variation of the martingale is given by 
$$
\langle M(f)\rangle_{t}=|K_af(z)|^2+\int_{0}^{\tau\wedge t} |\nabla K_{Y_s}(f)(X_s)|^2\,ds. 
$$
Setting 
$$
\E_{(z, a)}^{x}\left(\int_{0}^{\tau}  |\nabla K_{Y_s}(f)(X_s)|^2 ds\right)=\E_{(z, a)}\left(\int_{0}^{\tau} 
 |\nabla K_{Y_s}(f)(X_s)|^2ds\, \big|B_{\tau}=(x, 0)\right), 
$$
it is proved in  Ba\~nuelos \cite[p. 663]{Ban1}) that 
\begin{eqnarray}\label{g*harm1}
g_{*, a}^2(f)(x)&=&\int_0^{a}\int_{\R^n}y\,k_y(x-z) |\nabla K_y(f)(z)|^2 \, dz\, dy\\
&=&\frac{1}{2}\int_{\R^n}\E_{(z, a)}^{x}\left(\int_{0}^{\tau}  |\nabla K_{Y_s}(f)(X_s)|^2\,ds\right)k_{a}(x-z) dz.\nonumber 
\end{eqnarray}
%We note that the construction in \cite{Ban1} is a special case of that given 
%in \cite{Var1} for general Poisson semigroups. 
%That is, 
%\begin{equation}\label{g*harm2}
%g_{*, a}^2(f)(x)=\frac{1}{2}\int_{\R^n}\E^x_{(z, a)} (\langle M(f)\rangle_{\tau}) k_{a}(x-z) dz. 
%\end{equation}

\subsection{Littlewood-Paley and Lusin square functions for parabolic functions}\label{heat-squarefRn}
As it is well known, the Poisson kernel in the above construction can be replaced by more general volume preserving dilations of approximations to the identity and these give rise to more general Littlewood-Paley and Lusin square functions that have similar $L^p$ boundedness properties. See for example, \cite{BanMoo}.  A probabilistic way (also well known by now) to generalize the Poisson kernel construction above is to replace it with a more general Poisson semigroup obtained by the Bochner 1/2-subordination of a symmetric Markovian semigroup.   This is what is done in Varopoulos \cite{Var1} and Meyer \cite{Mey1}.  A variation of this construction applies to space-time martingales arising from the Markovian semigroup itself and not just by its 1/2-subordination.  This construction was used, in for example, \cite{BanMen}, for applications to the Beurling-Ahlfors operator and second order Riesz transforms.    For our purpose here, and to connect to the classical $A_p$-weights and the classical parabolic square functions in analysis, we present the construction for the heat (Gaussian kernel) semigroup on  $\R^n$.  
  For $t>0$, $x\in \R^n$, denote the heat (Gaussian)  kennel by 
\begin{equation}\label{gauss-ker}
 p_t(x)=\frac{1}{(2\pi t)^{\frac{n}{2}}}e^{-\frac{|x|^2}{2t}}
\end{equation}
 and this time consider the heat extension $P_{t}f(x)=(p_t{*}f)(x)$ of $f$.   The parabolic ``cone" in
$\R_+^{n+1}$ with vertex at $x$
and aperture $\alpha$ is defined by
 $$
\calP_\alpha(x)=\{(z,t)\in \R_+^{n+1}:|z-x| < \alpha \sqrt{t}\}. 
$$ 
 
 The  corresponding parabolic Littlewood-Paley functions $G(f)$, $G_{*}(f)$ and parabolic Liusin area function $\calP A_\alpha f(x)$ 
 are given, receptively,  by   
 \begin{equation}\label{G-heat}
 G(f)(x)=\left(\int_0^{\infty}|\nabla_{x}P_{t}f(x)|^2\, dt\right)^{1/2},
\end{equation}
\begin{equation}\label{G*-heat}
G_{*}(f)(x)=\left(\int_0^\infty \int_{\R^d} p_t(x-z)|\nabla_xP_{t}f(z)|^2\,dzdt\right)^{1/2}, 
\end{equation}
and 
\begin{equation}\label{Lus-heat}
\calP A_\alpha (f)(x)=\left( \int_{\calP_\alpha(x)} t^{-{n\over
2}}|\nabla_x P_{t}f(z)|^2 dz
dt\right)^{1/2},
\end{equation} 
where for any function $U(x, t)$ in the upper half-space 
$$\nabla_xU = \left( \tfrac{\partial U}{\partial x_1},\ \tfrac{\partial U}{\partial x_2}
  \ldots, \tfrac{\partial U}{\partial x_n} \right)$$  
  denotes its ``horizontal"  gradient.  
  
  These square functions have also been widely studied  in the literature.  We refer the reader to \cite{BanMoo} (and references given there) for some of their basic properties.  As in the case of harmonic functions we have $\|G(f)(x)\|_p\approx \|\calP A_\alpha f(x)\|_p\approx \|f\|_p$, for $1<p<\infty$,  and $\|G_{*}(f)\|_p\leq c_p\|f\|_p$, $2\leq p<\infty$.    Similarly, the the following pointwise inequality holds: 
 \begin{equation}\label{heat-pointwise}
G(f)(x)\leq C_{\alpha, n}^3\calP A_\alpha (f)(x)\leq C_{\alpha, n}^4 G_{*}(f)(x), \, \, \text{for all}\,\, \, x\in \R^n, 
\end{equation}
for some constants $C_{\alpha, n}^3$ and $C_{\alpha, n}^4$ depending only on $\alpha$ and $n$.  In fact, since 
$$t^{-n/2}\leq (2\pi)^{n/2}e^{\alpha^2/2}p_t(x-z), \,\,\,\, \text{for}\,\,\,\,   z\in \calP_\alpha(x),$$
we have 
\begin{equation}\label{pointwiseAbyG*}
\calP A_\alpha (f)(x)\leq (2\pi)^{n/4}e^{\alpha^2/4} G_{*}(f)(x)
\end{equation} 
Similarly,  by the semigroup property, 
$$
P_{t}f(x)=P_{t/2}(P_{t/2}f)(x)=\int_{\R^n} p_{t/2}(x-z)P_{t/2}f(z)dz
$$
and by  Jensen's inequality,
$$
|\nabla_{x} P_{t}f)(x)|^2\le \int_{\R^n} p_{t/2}(x-z)|\nabla_x P_{t/2}f(z)|^2dz.  
$$
This gives 
\begin{eqnarray}\label{pointwiseGbyG*}
G^2(f)(x)&=&\int_0^{\infty} |\nabla_x P_{t}f(z)|^2dt\nonumber\\
&\le &\int_0^{\infty}\int_{\R^n} p_{t/2}(z-x)|\nabla_x P_{t/2}f(z)|^2dzdt\\
&=&2G^2_{*}(f)(x)\nonumber. 
\end{eqnarray}
  
  As in the case of the Poisson kernel, the Littlewood-Paley $G_{*}$ function is the conditional expectation of the corresponding martingale square function.  This time, however, the martingales arise from space-time Brownian motion.  This fact is proved exactly as in \cite{Var1} or \cite[p. 663]{Ban1},  once the space-time martingale is identified.  For the sake of completeness, we briefly explain this. For the space-time martingale details as used here, see for example \cite{BanMen}. 
  
Let $(B_t)_{t\geq 0}$ be the Brownian motion in $\R^n$ starting at $z$ and let $\p_{z}$ and $\E_{z}$ be the probability  and expectation for $B$.  Fix  $0<T<\infty$.  Then 
$$
M(f)_{t} =P_{T-t}f(B_t),\qquad 0\le t\le T, 
$$
is a martingale and by the  It\^o's formula, 
$$
M(f)_{t}= P_{T}f(z)+ \int_0^t\nabla_x P_{T-s}f(B_s)\cdot dB_s. 
$$
The quadratic variation (square function) of this martingale is given by
$$
\langle M(f)\rangle_t=|P_{T}f(z)|^2+ \int_0^t|\nabla_x P_{T-s}f(B_s)|^2\,ds. 
$$
Setting 
\begin{equation}\label{G*-heat-truncated}
G_{*, T}(f)(x)=\left(\int_0^T \int_{\R^d} p_t(x-z)|\nabla_x P_{t}f(z)|^2 dz\,dt\right)^{1/2}, 
\end{equation}
we claim that  
\begin{equation}\label{Cond-Expect1}
G^2_{*, T}f(x)= \int_{\R^n} \E_{z}^{x}\left(\int_{0}^{T}|\nabla_x P_{T-s}f(B_s)|^2\,ds\right)p_{T}(x-z) dz,
\end{equation}
where
%$$
%\E_{z}^x \langle M\rangle_T= \E_{z} \left(\langle M\rangle_T\big|\,B_T=x\right)
%$$
$$
\E_{z}^{x}\left(\int_{0}^{T} |\nabla_x P_{T-s}f(B_s)|^2\,ds\right)=\E_{z}\left(\int_0^T|\nabla_x P_{T-s}f(B_s)|^2\,ds\,\big|\,B_T=x\right).
$$
%That is, 
%\begin{equation}\label{Cond-Expect2}
%G^2_{*, T}f(x)=\int_{\R^n} \large\left(\E_{z} \langle M(f)\rangle_T\big|\,B_T=x\large\right)\,p_{T}(x-z) dz. 
%\end{equation} 
To  prove \eqref{Cond-Expect1} recall that the conditional distribution of $B_s$ given 
$B_T=x$ (the Brownian bridge with $B_0=z$, $B_T=x$) is 
$$\frac{p_s(z-w)p_{T-s}(w-x)}{p_T(z-x)}\,dw. 
$$
 Thus, 
\begin{align*}
&\E_{z}^{x}\left(\int_{0}^{T}|\nabla_z P_{T-s}f(B_s)|^2ds\right)\\
&=\int_0^T\int_{\R^n} \frac{p_s(z-w)p_{T-s}(w-x)}{p_T(z-x)}|\nabla_x P_{T-s}f(w)|^2dw ds.
\end{align*}
Multiplying both sides of this equality by $p_T(z-x)$  and integrating on $z$ we obtain 
\begin{eqnarray*}
&&\int_{\R^d} \E_{z}^{x}\left(\int_{0}^{T}|\nabla_x P_{T-s}f(B_s)|^2ds\right)p_{T}(z-x) dz\\
&=&
\int_0^T\int_\Rd \int_{\R^n}p_s(z-w)p_{T-s}(w-x)|\nabla_xU_f(w, T-s)|^2\,dwdzds\\
&=& \int_0^T\int_{\R^d}p_{T-s}(y-z)|\nabla P_{T-s}f(y)|^2\,dy,ds\\
&=&\int_0^T \int_{\R^d}p_{s}(w-x)|\nabla_xP_{T-s}f(w)|^2\,dw\,ds=G^2_{*, T}f(x), 
\end{eqnarray*}
which verifies \eqref{Cond-Expect1}. 

\subsection{Poisson $A_p$ weights in the Disc} We now explore the connections between the martingale $A_p$ weights studied in \S \ref{mart-wights} and various classes previously studied in analysis.  This connections are more transparent for the unit disk in the plane (or unit ball in $\R^n$) where the Brownian motion has a natural place to start, namely the origin.  For further clarity and to connect to the classical Littlewood-Paley square functions, we first treat this case.  Let $\D=\{x\in \bC: |z|<1\}$ be the unit disc in the complex plain with the circle $\T=\partial D$ as its boundary and Poisson kernel  given by 
$$P_z(e^{i\theta})=\frac{1-|z|^2}{|z-e^{i\theta}|^2}, \,\,\,\, z\in \D.$$
For the rest of this section we assume that $w$ is a positive and integrable function on the unit circle $\T$.  Let 
$$
u_{w}(z)=\frac{1}{2\pi}\int_{\T} P_z(e^{i\theta}) w(e^{i\theta}) d\theta
$$ 
be the Poisson integral of $w$.  
\begin{definition}
 We say that $w\in A_p(\text{Poisson},\T)$ if 
\begin{equation}\label{Poisson-T}
\|w\|_{A_{p, \T}}=\|u_{w}(z)\left(u_{w^{-1/p-1}}(z)\right)^{p-1}\|_{L^{\infty}(\D)}<\infty. 
\end{equation}
\end{definition}

Now, let $(B_t)_{t\geq 0}$ be Brownian motion in $\D$ starting at the origin and let $\tau$ be its first exit time from $\D$.  Since $u_w(z)$ is harmonic, the process $Y_t=u_{w}(B_{\tau\wedge t})$, $t\geq 0$, is a martingale with $Y_{\infty}=w(B_\tau)$. By the strong Markov property, 
$$\E_{0}\left(Y_{\infty}\,\,\big| \F_{\tau\wedge t}\right)=\E_{B_{\tau\wedge t}}\left(w(B_\tau)\right)=Y_t$$ 
and similarly for $Y_{\infty}^{-1/(p-1)}$:
$$
\E_0\left( Y_{\infty}^{-1/(p-1)}\big| \F_{\tau\wedge t}\right)=  \E_0\left(u_{w^{-1/p-1}}(B_{\tau})\,\big| \F_{\tau\wedge t}\right)=
\E_{B_{\tau\wedge t}}\left(u_{w^{-1/p-1}}(B_{\tau})\right).
$$
Recalling the martingale $A_p$ weights defined in  \eqref{probAp}, we see that $Y\in A_p(\text{mart})$ if and only if $w\in A_p(\text{Poisson}, \T)$ and in fact, we have 
\begin{equation}\label{mart&T-equiv} 
\|Y\|_{A_p(\text{mart})}=\|w\|_{A_{p, \T}}. 
\end{equation}

\begin{remark}
The $A_p(\text{Poisson},\T)$ weights have been studied in recent years in connection with the $L_w^2(\T)$ boundedness of the conjugate function (Hilbert transform) with the correct dependence on the constant $\|w\|_{A_{p, \T}}$.  For this, we refer the reader to S. Petermichl and J. Wittwer \cite{PetWit1}.  The fact that these weights are probabilistic weights for the corresponding martingales has been known for many years. The first author learned this from R. Durrett  in the early 1980's. 
\end{remark}  

The Littlewood-Paley  $g_{*}$ function on the circle $\T$ is defined by  
\begin{equation}\label{doob-g_{*}} 
g_*(f)(e^{i\theta})=\left( {1\over \pi} \int_{\D} {1-|z|^2\over
|z-e^{i\theta}|^2}\log {1\over
|z|} |\nabla u_f(z)|^2 dz\right)^{1/2},   
\end{equation} 
where $dz$ denotes the area measure in the plane.  This version, which is pointwise comparable to the classical  Zygmund \cite{Zyg} $g_{*}$ function, was introduced in \cite{Ban1}.  
As in the case of $\R^n$, this  square function is the conditional expectation of the square function of the martingale $u_f(B_{\tau\wedge t})$.  That is, by the It\^o formula, 
we have
$$
u_f(B_{\tau\wedge t})=u_f(0)+\int_0^{\tau\wedge t}\nabla u_f(B_s)\cdot dB_s
$$  
for all $t$, and hence this martingale has the square function given by
$$
\langle u_f(B)\rangle_{\tau\wedge t} =|u_f(0)|^2+\int_0^{\tau\wedge t}|\nabla u_f(B_s)|^2ds. 
$$
Now, we have
$$
g^2_*(f)(e^{i\theta})=\E_{0}^{\theta}\left(\int_0^{\tau}|\nabla u_f(B_s)|^2 ds\right)=\E_{0}\left( \int_0^{\tau}|\nabla u_f(B_s)|^2 ds \,\,\big| B_\tau=e^{i\theta} \right).
$$
We refer the reader to \cite[p.~650]{Ban1} for the details on this formula which is proved using the transition probabilities for the Doob $h$-process for Brownian motion starting at $0$, conditioned to exit $\D$ at $e^{i\theta}$. Since $B_{\tau}$  is uniformly distributed on $\T$ under $\p_0$, we have 
\begin{eqnarray}\label{g-disc}
\frac{1}{2\pi}\int_{\T} g^2_*(f)(e^{i\theta})w(e^{i\theta})d\theta&=&\E_0\left(\E_0\left( \int_0^{\tau}|\nabla u_f(B_s)|^2 ds\,\,\big| B_\tau\right)w(B_\tau)\right)\nonumber\\
&=&\E_0\left( \left(\int_0^{\tau}|\nabla u_f(B_s)|^2 ds\right)\,w(B_\tau)\right). 
\end{eqnarray}
%In the same way, 
%\begin{equation}\label{uniform-disc}
%\int_T|f(e^{i\theta})-u_f(0)|^2 w(e^{i\theta})d\theta=E_0\left(\big|\int_0^{\tau}\nabla u_f(B_s)dB_s\big|^2 w(B_\tau)\right)
%\end{equation}
%This equality, \eqref{mart&T-equiv}, Theorem \ref{contmart} and Corollary \ref{maincor1} give 

\begin{theorem}\label{A_p(T)} Suppose $w\in A_2(Poisson, \T)$ and  $f\in C(\T)$, the space of continuous functions in $\T$. Then, 
\begin{equation}\label{mainnT1}
||f-u_f(0)||_{L^2_w(\T)}\leq \large\left(80\|w\|_{A_{2, \T}}\large\right)^{1/2}||g_*(f)(e^{i\theta})||_{L^2_w(\T)},
\end{equation}
\begin{equation}\label{maininT2}
 \|g_*(f)(e^{i\theta})\|_{ L_w^2(\T)}\leq \inf_{1<r<2}\left(\frac{r}{2-r}\|w\|_{A_{r, \T}}\right)^{1/2}\|f\|_{ L_w^2(\T)}
\end{equation}
and 
\begin{equation}\label{maininT3}
\|g_*(f)(e^{i\theta})\|_{ L_w^2(\T)}\leq 2^{7/4}\|w\|_{A_{2, \T}}\|f\|_{ L_w^2(\T)}, 
\end{equation}
where 
$u_f(0)=\frac{1}{2\pi}\int_{T} f(e^{i\theta}) d\theta$.  
\end{theorem}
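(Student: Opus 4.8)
The plan is to deduce \eqref{mainnT1}, \eqref{maininT2} and \eqref{maininT3} from the continuous-time martingale estimates of Theorem \ref{contmart} and Corollary \ref{maincor1}, using the probabilistic representation of $g_*$ recorded in \eqref{g-disc} and the identification \eqref{mart&T-equiv} of Poisson $A_p$ weights on $\T$ with martingale $A_p$ weights. Each of the three inequalities is homogeneous of the same degree in $w$ (the characteristic $\|w\|_{A_{r,\T}}$ is homogeneous of degree $0$, both $\|\cdot\|_{L^2_w(\T)}$-norms of degree $1/2$), so I may normalize $w$ so that $u_w(0)=\frac1{2\pi}\int_\T w\,d\theta=1$. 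Let $(B_t)_{t\ge 0}$ be Brownian motion in $\D$ started at $0$, $\tau$ its first exit time from $\D$, and $(\F_t)_{t\ge 0}$ the usual augmentation of its natural filtration. Then $d\mathbb{Q}=w(B_\tau)\,d\p_0$ is a probability measure, and $Y_t=u_w(B_{\tau\wedge t})$ is a nonnegative, uniformly integrable, continuous-path martingale with $Y_0=\E[Y_\infty]=1$ and $Y_\infty=w(B_\tau)$; by the Markov property $Z_t:=\E[Y_\infty^{-1/(p-1)}\mid\F_t]=u_{w^{-1/(p-1)}}(B_{\tau\wedge t})$, so \eqref{Poisson-T} forces $(Y_t,Z_t)\in\Omega^p_{\|w\|_{A_{p,\T}}}$ for every $t$ and gives $\|Y\|_{A_p(\mathrm{mart})}=\|w\|_{A_{p,\T}}$, which is \eqref{mart&T-equiv}. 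In particular $Y$ is a martingale $A_2$ weight, and, by the self-improvement established in the proof of Corollary \ref{maincor1}, a martingale $A_r$ weight for some $r<2$.

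For \eqref{mainnT1} I would work with the centered martingale $\widetilde X_t=u_f(B_{\tau\wedge t})-u_f(0)$. Since $f\in C(\T)$, its Poisson integral $u_f$ extends continuously to $\overline{\D}$ and $B_{\tau\wedge t}\to B_\tau$ a.s., so $\widetilde X$ is a continuous-path martingale, bounded by $2\|f\|_\infty$ (hence bounded in $L^2(\mathbb{Q})$), with $\widetilde X_\infty=f(B_\tau)-u_f(0)$ and, because $\widetilde X_0=0$, with quadratic variation $\langle\widetilde X\rangle_\infty=\int_0^\tau|\nabla u_f(B_s)|^2\,ds$. Taking $\mathbb{Q}$-expectations and invoking \eqref{g-disc} yields $\|\langle\widetilde X\rangle_\infty^{1/2}\|_{L^2(\mathbb{Q})}=\|g_*(f)\|_{L^2_w(\T)}$, while $\|\widetilde X_\infty\|_{L^2(\mathbb{Q})}=\|f-u_f(0)\|_{L^2_w(\T)}$ because $B_\tau$ is uniform on $\T$; applying \eqref{mainin3} to $\widetilde X$ and $Y$ then gives exactly \eqref{mainnT1}. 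For \eqref{maininT2} and \eqref{maininT3} I would instead take the uncentered martingale $X_t=u_f(B_{\tau\wedge t})$, for which $X_\infty=f(B_\tau)$, so $\|X_\infty\|_{L^2(\mathbb{Q})}=\|f\|_{L^2_w(\T)}$, and $\langle X\rangle_\infty=|u_f(0)|^2+\int_0^\tau|\nabla u_f(B_s)|^2\,ds$; discarding the nonnegative term $|u_f(0)|^2$ and using \eqref{g-disc} gives $\|g_*(f)\|_{L^2_w(\T)}\le\|\langle X\rangle_\infty^{1/2}\|_{L^2(\mathbb{Q})}$. Feeding this into \eqref{mainin4} produces \eqref{maininT2}, and into \eqref{mainin5} produces \eqref{maininT3}.

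Since this is essentially a dictionary translation, there is no genuinely hard step; the work lies in checking that the objects above really satisfy the hypotheses of Theorem \ref{contmart} and Corollary \ref{maincor1} — that $\widetilde X$ (resp. $X$) is a continuous-path martingale bounded in $L^2(\mathbb{Q})$, that the normalization $Y_0=\E[Y_\infty]=1$ may be arranged by rescaling $w$, and that $(Y_t,Z_t)$ stays in the hyperbolic domain $\Omega^2_{\|w\|_{A_{2,\T}}}$ (which also shows $w^{-1}\in L^1(\T)$, so that $Z$ is a bona fide martingale) — and in identifying the boundary limits $\widetilde X_\infty=f(B_\tau)-u_f(0)$ and $X_\infty=f(B_\tau)$, which is where the hypothesis $f\in C(\T)$ enters. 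For \eqref{maininT2} one should also observe that the infimum over $r\in(1,2)$ is finite precisely because a Poisson $A_2$ weight lies in some Poisson $A_r$ class with $r<2$, with $\|w\|_{A_{r,\T}}$ quantitatively controlled as in Corollary \ref{maincor1}.
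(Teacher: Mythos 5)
Your proposal matches the paper's proof essentially step for step: for \eqref{mainnT1} you use the centered martingale $\widetilde X_t = u_f(B_{\tau\wedge t})-u_f(0)$ together with \eqref{mart\&T-equiv}, \eqref{g-disc} and \eqref{mainin3}, while for \eqref{maininT2} and \eqref{maininT3} you use the uncentered martingale (whose square bracket picks up the extra term $|u_f(0)|^2$, which you discard) and feed the result into \eqref{mainin4} and Corollary \ref{maincor1} respectively. The only addition you make is to flag the normalization $u_w(0)=1$ needed for Theorem \ref{contmart}, which the paper leaves tacit; that is a correct and harmless remark, but the argument is the same one.
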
  

\begin{proof}  Applying \eqref{mart&T-equiv} and the inequality \eqref{mainin3} of Theorem \ref{contmart}, we have 
\begin{eqnarray*} 
\frac{1}{2\pi}\int_{\T}|f(e^{i\theta})-u_f(0)|^2 w(e^{i\theta})d\theta&=&\E_0\left(|f(B_{\tau})-u_f(0)|^2\,w(B_{\tau}\right)\\
&=&E_0\left(\big|\int_0^{\tau}\nabla u_f(B_s)dB_s\big|^2w(B_\tau)\right)\\
&\leq &80\|w\|_{A_{2, \T}}\E_0\left( \left(\int_0^{\tau}|\nabla u_f(B_s)|^2 ds\right)\,w(B_\tau)\right)\\
&=&80\|w\|_{A_{2, \T}}\left(\frac{1}{2\pi}\int_{\T} g^2_*(f)(e^{i\theta})w(e^{i\theta})d\theta\right),
\end{eqnarray*}
where the last equality follows from \eqref{g-disc}.  This proves \eqref{mainnT1}.  
To establish \eqref{maininT2} we apply \eqref{mainin4} to obtain 
\begin{eqnarray*} 
\frac{1}{2\pi}\int_{\T} g^2_*(f)(e^{i\theta})w(e^{i\theta})d\theta&=&\E_0\left( \left(\int_0^{\tau}|\nabla u_f(B_s)|^2 ds\right)\,w(B_\tau)\right)\\
&\leq & \E_0\left( \left(|u_f(0)|^2+\int_0^{\tau}|\nabla u_f(B_s)|^2 ds\right)\,w(B_\tau)\right)\\
&\leq & \inf_{1<r<2}\left(\frac{r}{2-r}\|w\|_{A_{r, \T}}\right)\,\E_{0}\left(|f(B_{\tau})|^2\, w(B_{\tau})\right)\\
&=& \inf_{1<r<2}\left(\frac{r}{2-r}\|w\|_{A_{r, \T}}\right)\left(\frac{1}{2\pi}\int_{\T}|f(e^{i\theta})|^2 w(e^{i\theta})d\theta\right). 
\end{eqnarray*} 
Finally, \eqref{maininT3} is proved the same way applying Corollary \ref{maincor1}.  
\end{proof} 

For $0<\alpha<1$ the Stoltz domain, denoted here by $\Gamma_{\alpha}(\theta)$, is the interior of the smallest convex set containing the disc $\{z\in \bC: |z|<\alpha\}$ and the point $e^{i\theta}$.  The Lusin area function (area integral) of $f$ is 
$$
A_{\alpha}(f)(e^{i\theta})=\left(\int_{\Gamma_{\alpha}(\theta)} |\nabla u_f(z)|^2 dz\right)^{1/2}. 
$$
Similarly, the Littlewood-Paley function $g$ is defined by 
$$
g(f)(e^{i\theta})=\left(\int_0^1 (1-r)|\nabla u_f(re^{i\theta})|^2 dr\right)^{1/2}.
$$
As before, it is  easy to show that there are universal constant $C_{\alpha}$ and $C$ such that the pointwise inequalities $A_{\alpha}(f)(e^{i\theta})\leq C_{\alpha} g_*(f)(e^{i\theta})$ and $g(f)(e^{i\theta})\leq C g_*(f)(e^{i\theta})$ hold.   This gives the following Corollary.  

\begin{corollary} Suppose $w\in A_2(Poisson, \T)$ and  $f\in C(\T)$. Then
\begin{equation}
\|A_{\alpha}(f)(e^{i\theta})\|_{L^2(w)}\leq 2^{7/4}C_{\alpha}\|w\|_{A_{2, \T}}\|f\|_{L^2(w)}
\end{equation}
and 
\begin{equation}\label{maininT4}
\|g(f)(e^{i\theta})\|_{L^2(w)}\leq 2^{7/4}C\|w\|_{A_{2, \T}}\|f\|_{L^2(w)}.
\end{equation}

\end{corollary}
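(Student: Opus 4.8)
The plan is to obtain both estimates as an immediate consequence of the two pointwise domination inequalities stated just above the Corollary together with the weighted $L^2$ bound \eqref{maininT3} already proved for $g_*$ in Theorem \ref{A_p(T)}. Recall that we have constants $C_\alpha$ (depending only on the aperture) and an absolute constant $C$ with $A_\alpha(f)(e^{i\theta})\le C_\alpha\, g_*(f)(e^{i\theta})$ and $g(f)(e^{i\theta})\le C\, g_*(f)(e^{i\theta})$ for a.e.\ $\theta$. Granting these, one squares, multiplies by $w$, integrates over $\T$ and takes square roots to get $\|A_\alpha(f)\|_{L^2(w)}\le C_\alpha\|g_*(f)\|_{L^2_w(\T)}$ and $\|g(f)\|_{L^2(w)}\le C\|g_*(f)\|_{L^2_w(\T)}$; then \eqref{maininT3}, which gives $\|g_*(f)\|_{L^2_w(\T)}\le 2^{7/4}\|w\|_{A_{2,\T}}\|f\|_{L^2_w(\T)}$ for $w\in A_2(\text{Poisson},\T)$ and $f\in C(\T)$, yields the two claimed bounds.

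Since the Corollary's proof is thus a two-line deduction, the only place I would spend effort in the write-up is a brief justification of the pointwise comparisons, which are the disc analogues of \eqref{pointwiseAbyG*} and \eqref{pointwisegbyg*}. For the area integral, one checks that on the Stoltz domain $\Gamma_\alpha(\theta)$ the density $\frac{1-|z|^2}{|z-e^{i\theta}|^2}\log\frac{1}{|z|}$ appearing in \eqref{doob-g_{*}} is bounded below by a positive constant depending only on $\alpha$ (using $\log\frac1{|z|}\asymp 1-|z|$ near the boundary and the fact that $|z-e^{i\theta}|\le c_\alpha(1-|z|)$ on $\Gamma_\alpha(\theta)$), so that $\int_{\Gamma_\alpha(\theta)}|\nabla u_f(z)|^2\,dz \le C_\alpha^2\, g_*^2(f)(e^{i\theta})$. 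For $g$ one uses the mean value property of the harmonic function $u_f$ exactly as in the passage leading to \eqref{pointwisegbyg*}, together again with $\log\frac1{|z|}\ge c(1-|z|)$, to dominate $\int_0^1(1-r)|\nabla u_f(re^{i\theta})|^2\,dr$ by a multiple of $g_*^2(f)(e^{i\theta})$.

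There is no real obstacle here beyond bookkeeping of these classical kernel estimates, all of which are contained in the references already cited (\cite{Ban1}, \cite{Ste1}, \cite{Zyg}). In the final text I would therefore state the two pointwise inequalities with a one-line proof and reference, and then write out the short chain of inequalities above to conclude.
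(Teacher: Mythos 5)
Your proposal matches the paper's argument exactly: the corollary is obtained by combining the pointwise dominations $A_\alpha(f)\le C_\alpha\,g_*(f)$ and $g(f)\le C\,g_*(f)$ (which the paper states without proof, noting they are "easy to show") with the weighted bound \eqref{maininT3} for $g_*$. Your additional sketch of the kernel estimates behind the pointwise comparisons is consistent with the standard arguments the paper alludes to via \eqref{pointwiseAbyg*} and \eqref{pointwisegbyg*}.
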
 
%\rb{(Remark to be removed later: Please note that for the conjugate function (Hilbert transform) $\tilde f$ the above results give 
%$$
%\|\tilde f\|_{L^2(w)}\leq C\|w\|_{A_{2, \T}}^{3/2}\|f\|_{L^2(w)}.
%$$
%Of course, what we would like to have is the boundedness of martingale transforms on $A_2(mart)$ with linear dependence.)} 

\subsection{Poisson and heat $A_p$ weights on $\R^n$} In this section we carry out the computations  in $\R^n$ done above for the disc.   We follow the notation of \S \ref{harm-squarefRn}. 

\begin{definition}\label{heatPoisson} Let $w$ be a positive locally integrable function defined on $\R^n$ and $1<p<\infty$.    
\item[(i)] We will say $w\in A_p(\text{Poisson},\R^n)$ if 
\begin{equation}\label{A_p-harmonic}
\|w\|_{A_p(\text{Poisson}, \R^n)}:=\|K_y{w}(x) \left(K_y({w^{-1/(p-1}})(x)\right)^{p-1}\|_{L^{\infty}(\R^{n+1}_{+})}<\infty.
\end{equation}
\item[(ii)] 
We will say $w\in A_p(\text{heat}, \R^n)$ if 
\begin{equation}\label{A_p-heat}
\|w\|_{A_p(\text{heat}, \R^n)}:=\|P_tw(x) \left(P_t({w^{-1/(p-1}})(x)\right)^{p-1}\|_{L^{\infty}(\R^{n+1}_{+})}<\infty. 
\end{equation}
\end{definition}

We remark here that  these $A_p$ weights can be defined for any Markovian semigroup and not just for the Poisson or heat semigroup in $\R^n$. Such $A_p$ weights are nothing more than the martingale ${A_{p}(\text{mart})}$ weights arising from the stochastic process associated with the semigroup.  Before we explain this more precisely, we recall that both classes of $A_p$ weights defined above have been studied before in connection to weight problems and applications.  Indeed, it was proved by S. Petermichl and A. Volberg in \cite{PetVol1} that there are constants $a$ and $b$, depending only on the dimension $n$, such that 
\begin{equation}\label{PetVol}
aA_p{(\text{heat}, \R^n)}\leq \|w\|_{A_p}\leq b A_p{(\text{heat}, \R^n)}, 
\end{equation}
where 
$\|w\|_{A_p}$ is as in the original definition of Muckenhoupt.  That is, $w\in A_p$ if 
$$ \|w\|_{A_p}=\sup_Q \left(\frac{1}{|Q|}\int_Q w \right)\left(\frac{1}{|Q|}\int_Q w^{-1/(p-1)}\right)^{p-1}<\infty,$$
where the sup is taken over all cubes $Q\subset \R^n$.  As for $A_p(\text{Poisson},\R^n)$, it is known that when $n=1$, $A_2(\text{Poisson})=A_2$ and that in fact there are universal constants $a$ and $b$ such that 
\begin{equation}\label{Pet1}
a\|w\|_{A_2}\leq \|w\|_{A_2(\text{Poisson}, \R^n)}\leq b\|w\|_{A_2}^2.
\end{equation}
On the other hand, for $n>1$ there are weights $w$ for which we have $\|w\|_{A_2}<\infty$, but $\|w\|_{A_2(\text{Poisson}, \R^n)}=\infty$.  Thus, for $n>1$, $A_2\neq A_2(\text{Poisson}, \R^n)$.  For these results, as well as the boundedness of the classical Riesz transforms on $L_w^2(\R^n)$, $w\in  A_2(\text{Poisson})$  with constants independent of the dimension $n$, we refer the reader to Hukov\'ic \cite{HTV}, Petermichl \cite{PetWit1} and K. Domelevo, Petermichl and  Wittwer \cite{DemPetWit}. 

\begin{remark}\label{A_pbyApoisson/heat}  
We remark here that while $A_2\neq A_2(\text{Poisson}, \R^n)$,  it is  easy to see that $A_2(\text{Poisson}, \R^n)\subset A_2$ for all $n\geq 1$.  Indeed, given a cube $Q$ centered at $z$ and length $\l_{Q}$, we pick $a\approx \l_{Q}$ to obtain that $\frac{1}{|Q|}\chi_{Q}\leq C_nk_a(z-x)$ for all $x\in Q$ for some universal constant $C_n$ depending only on $n$. This imediatly shows that $A_2(\text{Poisson}, \R^n)\subset A_p$ for all $1<p<\infty$ and $n\geq 1$.  The same argument (picking this time $t^2\approx \l_Q$) shows that $\frac{1}{|Q|}\chi_{Q}\leq C_n' p_t(z-x)$ for all $x\in Q$. This gives that $A_p(\text{heat},\R^n)\subset A_p$ for all $1<p<\infty$ and $n\geq 1$.
\end{remark} 

Our aim now is to prove versions of Theorem  \ref{contmart} and Corollary \ref{maincor1} for the Littlewood-Paley functions $g_{*}$ and $G_{*}$ 
with respect to weights in $A_2(\text{Poisson}, \R^n)$ and $A_p(\text{heat})$.  

\begin{lemma}\label{equiPoisson1} Suppose $w\in A_p(\text{Poisson}, \R^n)$.  Fix $(z, a)\in \R^{n+1}_{+}$ and let $B_t=(X_t, Y_i)$, $t\geq 0$, be Brownian motion $\R^{n+1}_{+}$ starting at $(z, a)$ and denote by $\tau$ its exit time from $ \R^{n+1}_{+}$.  Let  $\tilde{Y}_t=K_{Y_{\tau\wedge t}}w(X_{\tau\wedge t})$, $t\geq 0$, be the martingale under the measure $\p_{(z, a)}$. Then $\tilde{Y}_{\infty}=w(X_{\tau})\in A_p(\text{mart})$ and $\|\tilde Y_{\infty}\|_{A_{p}(\text{mart})}\leq \|w\|_{A_p(\text{Poisson},  \R^n)}$
\end{lemma}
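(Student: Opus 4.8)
The plan is to recognize the Poisson $A_p$ condition as the ``spatial'' incarnation of the martingale $A_p$ condition \eqref{probAp}, and to pass between the two by transporting the Poisson extensions of $w$ and of $w^{-1/(p-1)}$ along the Brownian path $B$, invoking the Markov property at each deterministic time $t$.

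First I would set up the two relevant harmonic functions: the Poisson extension $u_w(x,y)=K_yw(x)$ of $w$, and the Poisson extension $u_v(x,y)=K_y(w^{-1/(p-1)})(x)$ of $v:=w^{-1/(p-1)}$. The hypothesis $w\in A_p(\text{Poisson},\R^n)$ forces $u_w(z,a)=\E_{(z,a)}[w(X_\tau)]<\infty$ (the product $u_w(z,a)\,u_v(z,a)^{p-1}$ is finite and $u_v(z,a)>0$), so $\tilde Y$ is a genuine uniformly integrable martingale with $\tilde Y_\infty=w(X_\tau)$. If one insists on the normalization $\tilde Y_0=1$ used in \S\ref{mart-wights}, one simply divides $\tilde Y$ by the constant $u_w(z,a)$, which does not change the $A_p(\text{mart})$ constant.

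Then I would identify the two conditional expectations appearing in \eqref{probAp}. Fix $t>0$. On the event $\{t<\tau\}$ we have $B_t=(X_t,Y_t)\in\R^{n+1}_{+}$, and the Markov property of $B$ at time $t$, combined with the fact that the law of $X_\tau$ under $\p_{(x,y)}$ has density $k_y(x-\cdot)$ (the same computation as in \eqref{uniform}), gives
\begin{equation*}
\E_{(z,a)}\big[w(X_\tau)\,\big|\,\F_t\big]=K_{Y_t}w(X_t)=\tilde Y_t,\qquad
\E_{(z,a)}\big[\tilde Y_\infty^{-1/(p-1)}\,\big|\,\F_t\big]=K_{Y_t}\!\big(w^{-1/(p-1)}\big)(X_t),
\end{equation*}
whereas on $\{t\geq\tau\}$ both conditional expectations are already $\F_t$-measurable and equal $w(X_\tau)$ and $w(X_\tau)^{-1/(p-1)}$, respectively. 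Hence the quantity controlling $\|\tilde Y\|_{A_p(\text{mart})}$ is, on $\{t<\tau\}$, exactly $K_{Y_t}w(X_t)\big(K_{Y_t}(w^{-1/(p-1)})(X_t)\big)^{p-1}$, which is $\leq\|w\|_{A_p(\text{Poisson},\R^n)}$ $\p_{(z,a)}$-a.s.\ since $(X_t,Y_t)\in\R^{n+1}_{+}$; and on $\{t\geq\tau\}$ it equals $w(X_\tau)\cdot w(X_\tau)^{-1}=1$, which is likewise $\leq\|w\|_{A_p(\text{Poisson},\R^n)}$ because that characteristic is always at least $1$ (Hölder's inequality applied to the probability kernel $k_y(x-\cdot)$). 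Taking the essential supremum and then the supremum over $t>0$ gives $\|\tilde Y_\infty\|_{A_p(\text{mart})}\leq\|w\|_{A_p(\text{Poisson},\R^n)}<\infty$, hence $\tilde Y_\infty\in A_p(\text{mart})$.

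The point requiring care is the bookkeeping around the exit time $\tau$: one must check that the Markov identities really hold on $\{t<\tau\}$ (so that the $A_p(\text{mart})$ integrand is literally the Poisson $A_p$ expression evaluated at the half-space point $(X_t,Y_t)$, which is what makes it bounded by $\|w\|_{A_p(\text{Poisson},\R^n)}$) and that on $\{t\geq\tau\}$ the processes have already been absorbed at their boundary values. Beyond the trivial lower bound $\|w\|_{A_p(\text{Poisson},\R^n)}\geq1$, no analytic estimate is needed; the rest is a direct comparison of the two definitions.
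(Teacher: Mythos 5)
Your proof is correct and takes essentially the same approach as the paper: identify the two conditional expectations in the definition of $\|\cdot\|_{A_p(\text{mart})}$ with the Poisson extensions of $w$ and $w^{-1/(p-1)}$ via the (strong) Markov property, then bound the resulting product by $\|w\|_{A_p(\text{Poisson},\R^n)}$. The only cosmetic difference is that the paper applies the strong Markov property at the stopping time $\tau\wedge t$ in one stroke, whereas you condition on $\F_t$ and split into $\{t<\tau\}$ and $\{t\geq\tau\}$; your explicit treatment of the post-exit event (using $\|w\|_{A_p(\text{Poisson},\R^n)}\geq 1$) is a small point the paper leaves implicit.
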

\begin{proof} 
With  $\tilde{Y}_{\infty}=w(X_{\tau})$ and  $K_{a}w(z)=\E_{(z, a)}w(X_{\tau})$, the Strong Markov property gives 
%\begin{eqnarray*}
%\tilde{Y}_{t}=\E_{(z, a)}\left(\tilde{Y}_{\infty}\,\,\big| \F_{\tau\wedge t}\right)&=&\E_{(z, a)}\left(w(X_{\tau})\,\big| \F_{\tau\wedge t}\right)\\
%&=&\E_{B_{\tau\wedge t}}\left(w(B_\tau)\right)\\
%&=&K_{Y_{\tau\wedge t}}w(X_{\tau\wedge t}).  
%\end{eqnarray*}  
\begin{eqnarray*}
\E_{(z,a)}\left(\left(\frac{1}{\tilde{Y}_{\infty}}\right)^{1/(p-1)}\big|\F_{\tau\wedge t}\right)&=& \E_{(z,a)}\left(\left(\frac{1}{w(X_{\tau})}\right)^{1/(p-1)}\big|\F_{\tau\wedge t}\right)\\
&=& \E_{B_{\tau\wedge t}}\left(\left(\frac{1}{w(X_{\tau})}\right)^{1/(p-1)}\right)\\
&=&K_{Y_{\tau\wedge t}}({w^{-1/(p-1}})(X_{\tau\wedge t}).
\end{eqnarray*} 
Hence, 
\begin{align*}
&\tilde{Y}_t\, \left(\E_{(z,a)}\left(\left(\frac{1}{\tilde{Y}_{\infty}}\right)^{1/(p-1)}\big|\F_{\tau\wedge t}\right)\right)^{p-1}\\
&\qquad \qquad =
K_{Y_{\tau\wedge t}}w(X_{\tau\wedge t})\left(K_{Y_{\tau\wedge t}}({w^{-1/(p-1}})(X_{\tau\wedge t})\right)^{p-1}.
\end{align*}
It follows from this that  for all $(z, a)\in \R^{n+1}_{+}$, 
\begin{eqnarray*}\label{equiv2}
\|\tilde Y_{\infty}\|_{A_{p}(\text{mart})}&=& \sup_{t\geq 0}\Big\|\tilde{Y}_{t}\left(\E_{(z,a)}\left(\left(\frac{1}{\tilde{Y}_{\infty}}\right)^{1/(p-1)}\big|\F_{\tau\wedge t}\right)\right)^{p-1}\Big\|_{\infty}\\
&=&\sup_{t>0}\|K_{Y_{\tau\wedge t}}w(X_{\tau\wedge t})\left(K_{Y_{\tau\wedge t}}({w^{-1/(p-1}})(X_{\tau\wedge t})\right)^{p-1}\|_{\infty}\\&\leq& \|w\|_{A_p(\text{Poisson})}.
\end{eqnarray*}
This completes the proof. 
\end{proof} 
 
 \begin{remark}\label{remark1} It is important to note here, for our applications below,  that the above inequality $\|\tilde Y\|_{A_{p}(\text{mart})}\leq \|w\|_{A_2(\text{Poisson},  \R^n)}$ holds for all starting points $(z, a)$. While not needed for the purpose of this paper, we note that here we actually have equality.  That is,  $\|\tilde Y\|_{A_{p}(\text{mart})}=\|w\|_{A_2(\text{Poisson},  \R^n)}$.  This follows from the fact that if  $F(x, y)$ is a continuous bounded function in the upper half-space then 
 $$\sup_{t}\|F(X_{\tau\wedge t}, Y_{\tau\wedge t})\|_{L^{\infty}(\p_{(z, a)})}=\|F(x, y)\|_{L^{\infty}(\R^{n+1}_{+})},$$
 since given any ball $B$ in the upper-half space there will be a time $t>0$ such that 
 $\p_{(z, a)}\{\left(X_{\tau\wedge t}, Y_{\tau\wedge t})\in B\right)\}>0$.  Indeed, this quantity is given by the integral of the Dirichlet heat kernel in the upper half-space (which is just the product of the heat kernel in $\R^n$ and heat kernel for the half line) over the ball $B$.  
\end{remark}

\begin{theorem}\label{A_2(Poisson-Rn)}  Suppose $w\in A_2(\text{Poisson},  \R^n)$ and $f\in C_{0}(\R^n)$, the space of continuous functions of compact support.   Then, 
\begin{equation}\label{mainnRn1}
||f||_{L_w^2(\R^n)}\leq \large\left(320\|w\|_{A_2(\text{Poisson},  \R^n)}\large\right)^{1/2}||g_*(f)||_{L_w^2(\R^n)},
\end{equation}

\begin{equation}\label{maininRn2}
||g_{*}(f)||_{L_w^2(\R^n)}\leq \frac{1}{\sqrt{2}} \inf_{1<r<2}\left(\frac{r}{2-r}\|w\|_{A_2(\text{Poisson},  \R^n)}\right)^{1/2}\|f\|_{L_w^2(\R^n)}
\end{equation}
and 
\begin{equation}\label{maininRn3}
\|g_*(f)\|_{L_w^2(\R^n)}\leq 2^{5/4}\|w\|_{A_2(\text{Poisson},  \R^n)}\|f\|_{L_w^2(\R^n)}.
\end{equation}
\end{theorem}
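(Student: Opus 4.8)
The plan is to deduce the three bounds from their martingale counterparts --- inequality \eqref{mainin3} of Theorem \ref{contmart} and Corollary \ref{maincor1} --- by running Brownian motion in the upper half-space, exactly as in the identity \eqref{g*harm1}, and then passing from expectations to $L^2_w(\R^n)$-norms via the Fubini identity \eqref{uniform}.

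First I would fix a starting point $(z,a)\in\R^{n+1}_+$, let $B=(X,Y)$ be Brownian motion in $\R^{n+1}_+$ started at $(z,a)$ with exit time $\tau$, and form the two continuous martingales $M(f)_t=K_{Y_{\tau\wedge t}}(f)(X_{\tau\wedge t})$ and $\tilde Y_t=K_{Y_{\tau\wedge t}}(w)(X_{\tau\wedge t})$, so that $M(f)_\infty=f(X_\tau)$ (here $f\in C_0(\R^n)$ is used to close the martingale up to the boundary) and $\tilde Y_\infty=w(X_\tau)$; note $\E_{(z,a)}[f(X_\tau)^2w(X_\tau)]=K_a(f^2w)(z)<\infty$ because $f^2w\in L^1(\R^n)$. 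Continuity of the paths is automatic because the underlying filtration is Brownian. By Lemma \ref{equiPoisson1}, $\|\tilde Y\|_{A_2(\text{mart})}\le\|w\|_{A_2(\text{Poisson},\R^n)}=:c$, and --- this is the point of that lemma together with Remark \ref{remark1} --- the bound is uniform in the starting point. By It\^o's formula (as recorded before \eqref{g*harm1}), $\langle M(f)\rangle_\infty=|K_af(z)|^2+\int_0^\tau|\nabla K_{Y_s}(f)(X_s)|^2\,\mathrm{d}s$.

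Next I would apply \eqref{mainin3} to the pair $(M(f),\tilde Y)$ (after the harmless rescaling making $\tilde Y_0=1$, which does not change the $A_2$ characteristic): for every $(z,a)$,
$$\E_{(z,a)}\big[f(X_\tau)^2w(X_\tau)\big]\le 80c\,\E_{(z,a)}\Big[\Big(|K_af(z)|^2+\int_0^\tau|\nabla K_{Y_s}(f)(X_s)|^2\,\mathrm{d}s\Big)w(X_\tau)\Big],$$
and integrate this in $z$ over $\R^n$, keeping $a$ fixed. By \eqref{uniform} the left side becomes $\|f\|_{L^2_w(\R^n)}^2$; on the right, the $\int_0^\tau|\nabla K_{Y_s}(f)(X_s)|^2\,\mathrm{d}s$ contribution integrates, via \eqref{g*harm1} and Fubini's theorem, to $2\int_{\R^n}g_{*,a}^2(f)(x)w(x)\,\mathrm{d}x$, while the $|K_af(z)|^2$ contribution integrates to $\int_{\R^n}|K_af(z)|^2K_aw(z)\,\mathrm{d}z$. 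Letting $a\to\infty$, monotone convergence turns the square-function term into $\|g_*(f)\|_{L^2_w(\R^n)}^2$, so it remains only to check that the ``initial term'' $\int_{\R^n}|K_af(z)|^2K_aw(z)\,\mathrm{d}z\to0$; I would obtain this from $|K_af(z)|\le c_na^{-n}\|f\|_{L^1}$, the semigroup property of the Poisson kernel, and the growth estimate $\int_{B_R}w=o(R^{2n})$, the latter being a consequence of $A_2(\text{Poisson},\R^n)\subset A_2$ (Remark \ref{A_pbyApoisson/heat}) together with the Coifman--Fefferman self-improvement. This gives \eqref{mainnRn1}; the inequality \eqref{maininRn3} follows in exactly the same way with Corollary \ref{maincor1} in place of \eqref{mainin3}, and \eqref{maininRn2} with \eqref{mainin4} together with Lemma \ref{equiPoisson1} used with the exponent $r$, now simply keeping $\int_0^\tau|\nabla K_{Y_s}(f)(X_s)|^2\,\mathrm{d}s\le\langle M(f)\rangle_\infty$ on the relevant side. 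The factor $\tfrac12$ in \eqref{g*harm1} is what accounts for the $1/\sqrt2$ in \eqref{maininRn2} and for $2^{5/4}$ (rather than $2^{7/4}$) in \eqref{maininRn3}.

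The step I expect to be the main obstacle is the vanishing of the initial term as $a\to\infty$. In the disc the Brownian motion starts at the fixed interior point $0$, the analogous term is the constant $|u_f(0)|^2$, and this is precisely why Theorem \ref{A_p(T)} involves $f-u_f(0)$ rather than $f$; on $\R^n$ the base point of the motion ranges over all of $\R^n$, and one must genuinely combine the compact support and continuity of $f$ with quantitative information on $\int_{B_R}w$ supplied by the $A_2$ condition. Everything else is routine: It\^o's formula, the identities \eqref{uniform} and \eqref{g*harm1}, Lemma \ref{equiPoisson1}, and the martingale inequalities already in hand.
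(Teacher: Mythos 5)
Your treatment of \eqref{maininRn2} and \eqref{maininRn3} coincides with the paper's: the same lift via \eqref{uniform}, the same use of \eqref{g*harm1} (with the factor $\tfrac12$ correctly tracked), and the same appeal to \eqref{mainin4}, respectively to Corollary \ref{maincor1}, via Lemma \ref{equiPoisson1}. For \eqref{mainnRn1}, however, you take a genuinely different route and it contains a gap. The paper first splits $f(X_\tau)^2\le 2|f(X_\tau)-K_af(z)|^2+2|K_af(z)|^2$ \emph{and} inserts the indicator $\mathbf{1}_B(X_\tau)$ where $B\supset\operatorname{supp} f$; then the remainder is $2\int|K_af(z)|^2\,\E_{(z,a)}[\mathbf{1}_B w(X_\tau)]\,\mathrm{d}z\le 2\bigl(c_na^{-n}\|f\|_1\bigr)^2\int_Bw\to 0$, no information on the growth of $w$ beyond local integrability being needed. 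You instead apply \eqref{mainin3} to $M(f)$ directly (this would actually give the \emph{better} constant $160$ rather than $320$, contrary to what you state), but your remainder is $\int_{\R^n}|K_af(z)|^2\,K_aw(z)\,\mathrm{d}z$ with no truncation in $X_\tau$; showing it vanishes amounts (after $\sup_z|K_af|\le c_na^{-n}\|f\|_1$, self-adjointness, and the semigroup identity $K_aK_a=K_{2a}$) to proving $a^{-n}\sup_{x\in\operatorname{supp}f}K_{2a}w(x)\to 0$.

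Here is where your justification breaks down. The growth estimate $\int_{B_R}w=o(R^{2n})$ coming from $A_{2-\e}$ is too weak when $n>1$: it does not even guarantee that $K_aw$ is finite (one needs $\int_{\R^n}w(y)(1+|y|)^{-(n+1)}\,\mathrm{d}y<\infty$, i.e.\ essentially $w(B_R)=o(R^{n+1})$, and $R^{n+1}\ll R^{2n}$ for $n\ge 2$). The correct input is precisely the finiteness of $K_yw$ built into the definition of $A_2(\text{Poisson},\R^n)$, which forces $w(B_R)=o(R^{n+1})$ and then, via a dyadic decomposition of the Poisson integral, does give $K_{2a}w(x)=o(a^n)$ uniformly on compacts. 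So your argument can be repaired (and would even yield a sharper constant), but the ingredient you cite is not the right one; the paper's $\mathbf{1}_B$ device is exactly what allows them to sidestep any growth estimate at the cost of the extra factor of $2$.
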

\begin{proof}

Let $R$ be large enough so that the support of $f$ is contained in the ball $B=B(0, R)$.      
By \eqref{uniform} we have 
\begin{eqnarray}\label{term1}
\int_{\R^n}|f(x)|^2 w(x) dx&=&\int_{\R^n} \mathbf{1}_{B}(x)|f(x)|^2 w(x) dx\nonumber\\
&=&\int_{\R^n} E_{(z, a)}\left(\mathbf{1}_{B}(X_{\tau})|f(X_{\tau})|^2 w(X_{\tau})\right)dz\nonumber\\
&\leq &2\int_{\R^n} E_{(z, a)}\left(\mathbf{1}_{B}(X_{\tau})|f(X_{\tau})-K_af(z)|^2 w(X_{\tau})\right)dz\\
&+&2\int_{\R^n}|K_af(z)|^2\E_{(z, a)} \mathbf{1}_{B}(X_{\tau})w(X_{\tau}) dz. \nonumber
\end{eqnarray} 
We now estimate the first term under the integral on the right hand side of the above inequality.  By 
Theorem \ref{contmart} and Lemma \ref{equiPoisson1} we have 
\begin{eqnarray*}
&&2\E_{(z, a)}\left(|f(X_{\tau})-K_af(z)|^2 w(X_{\tau})\right)\\
&\leq & 160 \|w\|_{A_2(\text{Poisson})}\E_{(z, a)}\left(\int_{0}^{\tau} |\nabla K_{Y_s}(f)(X_s)|^2\,ds\, w(X_{\tau})\right)\\
&=& 160 \|w\|_{A_2(\text{Poisson},  \R^n)}\E_{(z, a)}\left(\E_{(z, a)}\left(\int_{0}^{\tau} |\nabla K_{Y_s}(f)(X_s)|^2\,ds\, w(X_{\tau})\big| X_{\tau}\right) \right)\\
&=& 160 \|w\|_{A_2(\text{Poisson},  \R^n)}\E_{(z, a)}\left(\E_{(z, a)}\left(\int_{0}^{\tau} |\nabla K_{Y_s}(f)(X_s)|^2\,ds\,)\big| X_{\tau}\right) w(X_{\tau}\right)\\
&=& 160 \|w\|_{A_2(\text{Poisson},  \R^n)}\E_{(z, a)}\left(\E_{(z, a)}^{X_{\tau}}\left(\int_{0}^{\tau} |\nabla K_{Y_s}(f)(X_s)|^2\,ds \right) w(X_{\tau}\right)\\
&=& 320 \|w\|_{A_2(\text{Poisson},  \R^n)}\E_{(z, a)}\left[g^2_{*, a}(f)(X_{\tau}) w(X_{\tau})\right].
\end{eqnarray*}
Integrating both sides of this inequality in $z$ gives 
\begin{eqnarray*}\label{term2}
&&2\int_{\R^n}\E_{(z, a)}\left(|f(X_{\tau})-K_af(z)|^2 w(X_{\tau})\right) dz\\
&\leq&320 \|w\|_{A_2(\text{Poisson},  \R^n)}\int_{\R^n}\E_{(z, a)}\left[g^2_{*, a}(f)(X_{\tau}) w(X_{\tau})\right] dz\\
&=& 320 \|w\|_{A_2(\text{Poisson},  \R^n)}\int_{\R^n}g^2_{*, a}(f)(x) w(x) dx\\
&\leq&320 \|w\|_{A_2(\text{Poisson},  \R^n)} \int_{\R^n}g^2_{*}(f)(x) w(x) dx.
\end{eqnarray*} 
Combining this with \eqref{term1} we obtain
\begin{equation}\label{term3}
\begin{split}
\int_{\R^n} |f(x)|^2 w(x) dx&\leq 
320 \|w\|_{A_2(\text{Poisson},  \R^n)} \int_{\R^n}g^2_{*}(f)(x) w(x) dx\\
&\qquad +2\int_{\R^n}|K_af(z)|^2\E_{(z, a)} \left(\mathbf{1}_{B}(X_{\tau})w(X_{\tau}) \right)dz.
\end{split}
\end{equation}
Since $f\in C_{0}(\R^n)$, we have 
$$
|K_af(z)|=|\int_{\R^n}k_a(x-z)f(x)\,dx|\leq \frac{c_n}{a^{n}}\int_{\R^n}|f(x)| dx. 
$$
Thus, 
\begin{eqnarray*}
&&2\int_{\R^n}|K_af(z)|^2\E_{(z, a)} \left(\mathbf{1}_{B}(X_{\tau})w(X_{\tau}) \right)dz\\
&\leq& \frac{2c_n}{a^{n}}\left(\int_{\R^n}|f(x)| dx\right)\left(\int_{\R^n}\E_{(z, a)} \left(\mathbf{1}_{B}(X_{\tau})w(X_{\tau}) \right)dz\right)\\
&=& \frac{2c_n}{a^{n}}\left(\int_{\R^n}|f(x)| dx\right)\left(\int_{B}w(x)dx\right). 
\end{eqnarray*}
Combining this with \eqref{term3} and letting $a\to\infty$ gives 
$$
\int_{\R^n} |f(x)|^2 w(x) dx\leq 
320 \|w\|_{A_2(\text{Poisson},  \R^n)} \int_{\R^n}g^2_{*}(f)(x) w(x) dx, 
$$
which is the announced inequality.  

%\bigskip

Similarly,  inequality \eqref{mainin4}  in Theorem \ref{contmart} gives 
\begin{align*}
&\int_{\R^n} g^2_{*, a}(f)(x) w(x) dx\\
&=\frac{1}{2}\int_{\R^n} \E_{(z, a)}\left(\int_{0}^{\tau} |\nabla K_{Y_s}(f)(X_s)|^2\,ds\, w(X_{\tau})\right)dz\\
&\leq  \frac{1}{2}\int_{\R^n} E_{(z, a)}\left(\left(|K_af(z)|^2+\int_{0}^{\tau} |\nabla K_{Y_s}(f)(X_s)|^2\,ds\right)\, w(X_{\tau})\right)dz\\
&\leq \frac{1}{2} \inf_{1<r<2}\left(\frac{r}{2-r}\|w\|_{A_2(\text{Poisson},  \R^n)}\right)\int_{\R^n}\E_{(z, a)}|f(X_{\tau}|^2 w(X_{\tau}) dz\\
&=\frac{1}{2}\inf_{1<r<2}\left(\frac{r}{2-r}\|w\|_{A_2(\text{Poisson},  \R^n)}\right)\int_{\R^n} |f(x)|^2 w(x)dx.
\end{align*}

Combining the above arguments with Corollary \ref{maincor1} we obtain \eqref{maininRn3} and this completes the proof of the theorem. 
\end{proof}

From the pointwise inequalities \eqref{pointwiseAbyg*} and \eqref{pointwisegbyg*}, combined with \eqref{maininRn3}, we obtain
%$A_{\alpha}(f)(x)\leq \frac{(\alpha^2+1)^{\frac{n+1}{4}}}{\sqrt{c_n}}g_{*}(f)(x)$ and $g(f)(x)\leq 2g_{*}(f)(x)$, 

\begin{corollary}  Suppose $w\in A_2(\text{Poisson},  \R^n)$ and $f\in C_{0}(\R^n)$.  Then 
\begin{equation}\label{maininRn4}
\|A_{\alpha}(f)(x)\|_{L_w^2(\R^n)}\leq \frac{(\alpha^2+1)^{\frac{n+1}{4}}}{\sqrt{c_n}} 2^{5/4}\|w\|_{A_2(\text{Poisson},  \R^n)}\|f\|_{L_w^2(\R^n)} 
\end{equation}
and 
\begin{equation}\label{maininRn5}
\|g(f)(x)\|_{L_w^2(\R^n)}\leq  2^{9/4}\|w\|_{A_2(\text{Poisson},  \R^n)}\|f\|_{L_w^2(\R^n)}.  
\end{equation}
\end{corollary}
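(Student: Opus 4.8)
The plan is to obtain both inequalities as immediate consequences of Theorem \ref{A_2(Poisson-Rn)}, combined with the two elementary pointwise comparisons recorded earlier in this section. The underlying principle is trivial: if $h_1,h_2\colon\R^n\to[0,\infty)$ satisfy $h_1(x)\leq K\,h_2(x)$ for every $x$, then for any weight $w$ one has $\|h_1\|_{L_w^2(\R^n)}\leq K\,\|h_2\|_{L_w^2(\R^n)}$, simply by squaring, multiplying by $w$, integrating over $\R^n$, and taking square roots. Hence no new analytic ingredient beyond the weighted $L^2$ bound \eqref{maininRn3} for $g_{*}$ is required.

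First I would invoke the pointwise estimate \eqref{pointwiseAbyg*}, namely
\[
A_{\alpha}(f)(x)\leq \frac{(\alpha^2+1)^{\frac{n+1}{4}}}{\sqrt{c_n}}\,g_{*}(f)(x)\qquad\text{for all }x\in\R^n .
\]
Applying the monotonicity of the weighted $L^2$ norm just described, and then \eqref{maininRn3}, gives
\[
\|A_{\alpha}(f)\|_{L_w^2(\R^n)}\leq \frac{(\alpha^2+1)^{\frac{n+1}{4}}}{\sqrt{c_n}}\,\|g_{*}(f)\|_{L_w^2(\R^n)}\leq \frac{(\alpha^2+1)^{\frac{n+1}{4}}}{\sqrt{c_n}}\,2^{5/4}\,\|w\|_{A_2(\text{Poisson},\,\R^n)}\,\|f\|_{L_w^2(\R^n)},
\]
which is \eqref{maininRn4}.

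Next I would repeat the same reasoning with \eqref{pointwisegbyg*}, that is $g(f)(x)\leq 2\,g_{*}(f)(x)$, to obtain
\[
\|g(f)\|_{L_w^2(\R^n)}\leq 2\,\|g_{*}(f)\|_{L_w^2(\R^n)}\leq 2\cdot 2^{5/4}\,\|w\|_{A_2(\text{Poisson},\,\R^n)}\,\|f\|_{L_w^2(\R^n)}=2^{9/4}\,\|w\|_{A_2(\text{Poisson},\,\R^n)}\,\|f\|_{L_w^2(\R^n)},
\]
which is \eqref{maininRn5}. I do not expect any genuine obstacle here: the entire content of the corollary is carried by Theorem \ref{A_2(Poisson-Rn)}, and the argument consists of nothing more than inserting the two pointwise bounds. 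The one point deserving a word of care is that the hypothesis $f\in C_{0}(\R^n)$ is exactly what legitimizes the application of \eqref{maininRn3}, so routing the estimate through $g_{*}$ entails no loss of generality.
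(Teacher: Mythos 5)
Your proposal is correct and follows essentially the same route as the paper: the authors explicitly introduce the corollary with the phrase that it follows from the pointwise inequalities \eqref{pointwiseAbyg*} and \eqref{pointwisegbyg*} combined with \eqref{maininRn3}, and the constants $\frac{(\alpha^2+1)^{(n+1)/4}}{\sqrt{c_n}}\cdot 2^{5/4}$ and $2\cdot 2^{5/4}=2^{9/4}$ work out exactly as you computed.
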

 
Our results for $A_p(\text{heat})$ weights parallel those for $A_p(\text{Poisson},  \R^n)$.  We start with the corresponding lemma which shows the identification of these weights with the martingale weights arising from the semigroup.  

\begin{lemma}\label{equiheat1} Suppose $w\in A_p(\text{heat},  \R^n)$.  Fix $0<T<\infty$ and $z\in \R^n$. Let $(B_t)_{t\geq 0}$ be Brownian motion  in $\R^n$ starting at $z$.  Consider the martingale ${Y}_t=P_{T-t}w(B_t)$, $0\leq t\leq T$, under the measure $\p_z$.   Then ${Y}_{T}=w(B_T)\in A_p(\text{mart})$ and $\|Y_T\|_{A_{p}(\text{mart})}\leq \|w\|_{A_p(\text{heat},  \R^n)}$.
\end{lemma}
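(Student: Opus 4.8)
The plan is to follow, essentially verbatim, the argument given just above for Lemma \ref{equiPoisson1}, with the harmonic extension and the exit time of the half-space replaced by the heat extension and the fixed horizon $T$. First I would record the two structural facts: since $P_0w=w$ we have $Y_T=P_0w(B_T)=w(B_T)$, and, the heat extension being a martingale along $(B_t)$, $Y_t=P_{T-t}w(B_t)=\E_z\!\left[w(B_T)\mid\mathcal{F}_t\right]=\E_z\!\left[Y_T\mid\mathcal{F}_t\right]$; thus $Y$ is indeed the martingale generated by $Y_T=w(B_T)$. Because the characteristic $\|\cdot\|_{A_p(\text{mart})}$ in \eqref{probAp} involves a supremum over all $t>0$, I would extend $Y$ beyond $T$ by setting $Y_t:=Y_T$ for $t>T$; this keeps the martingale property and makes $Y_\infty=Y_T$. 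The normalization $Y_0=\E[Y_\infty]=1$ plays no role, since multiplying $w$ by a positive constant affects neither side of the asserted inequality.

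The key computation I would carry out next concerns the conditional expectation appearing in \eqref{probAp}. For $0\le t\le T$, the Markov property of Brownian motion gives
$$\E_z\!\left[\Big(\tfrac{1}{Y_T}\Big)^{1/(p-1)}\,\Big|\,\mathcal{F}_t\right]=\E_z\!\left[w(B_T)^{-1/(p-1)}\,\big|\,\mathcal{F}_t\right]=\E_{B_t}\!\left[w(B_{T-t})^{-1/(p-1)}\right]=P_{T-t}\big(w^{-1/(p-1)}\big)(B_t),$$
exactly as in the Poisson case. Consequently,
$$Y_t\left(\E_z\!\left[\Big(\tfrac{1}{Y_T}\Big)^{1/(p-1)}\,\Big|\,\mathcal{F}_t\right]\right)^{p-1}=P_{T-t}w(B_t)\Big(P_{T-t}\big(w^{-1/(p-1)}\big)(B_t)\Big)^{p-1}=F\big(B_t,\,T-t\big),$$
where $F(x,s)=P_sw(x)\big(P_s(w^{-1/(p-1)})(x)\big)^{p-1}$ is precisely the function whose $L^\infty(\R^{n+1}_+)$ norm equals $\|w\|_{A_p(\text{heat},\R^n)}$, by \eqref{A_p-heat}.

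It then remains only to take $L^\infty$ norms. For each $t\in[0,T]$ the displayed quantity is, $\p_z$-almost surely, bounded by $\|F\|_{L^\infty(\R^{n+1}_+)}=\|w\|_{A_p(\text{heat},\R^n)}$; and for $t>T$ it equals $Y_T\cdot Y_T^{-1}=1$, which is at most $\|w\|_{A_p(\text{heat},\R^n)}$ because the $A_p$ characteristic is always $\ge 1$, by H\"older's inequality. Taking the supremum over $t>0$ then yields $\|Y_T\|_{A_p(\text{mart})}\le\|w\|_{A_p(\text{heat},\R^n)}$. I do not expect a genuine obstacle here: the only points needing a word of care are the passage through the Markov property, which is identical to the Poisson computation, and the observation that, once $Y$ is frozen after time $T$, the supremum over $t>0$ in \eqref{probAp} reduces to one over $t\in(0,T]$. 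As in Remark \ref{remark1}, one could in fact upgrade this to an equality, but the stated inequality is all that is needed below.
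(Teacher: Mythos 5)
Your proof is correct and follows essentially the same route as the paper's: you use the Markov property to identify $\E_z[(1/Y_T)^{1/(p-1)}\mid\mathcal F_t]$ with $P_{T-t}(w^{-1/(p-1)})(B_t)$ and then bound the resulting product by $\|w\|_{A_p(\text{heat},\R^n)}$. The only addition is your explicit handling of times $t>T$ (freezing $Y$ and noting the value $1\le\|w\|_{A_p(\text{heat},\R^n)}$), a small point of rigor the paper glosses over by simply writing $\sup_{0<t<T}$.
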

\begin{proof}  Recall that $P_tw(z)=\E_zw(B_t)$.  As before, we apply the Strong Markov property to obtain that for any $0<t<T$, 
\begin{eqnarray*}
\E_{z}\left(\left(\frac{1}{{Y}_{T}}\right)^{1/(p-1)}\big|\F_{t}\right)&=& \E_{z}\left(\left(\frac{1}{w(B_T)}\right)^{1/(p-1)}\big|\F_{t}\right)\\
&=& \E_{B_{t}}\left(\left(\frac{1}{w(X_{T-t})}\right)^{1/(p-1)}\right)\\
&=&P_{T-t}({w^{-1/(p-1}})(B_{t}). 
\end{eqnarray*} 
Thus,
$$
Y_t\left(\E_{z}\left(\left(\frac{1}{{Y}_{T}}\right)^{1/(p-1)}\big|\F_{t}\right)\right)^{p-1}=P_{T-t}w(B_t)\left(P_{T-t}({w^{-1/(p-1}})(B_{t})\right)^{p-1}
$$
and 
$$
\|Y_T\|_{A_{p}(\text{mart})}=\sup_{0<t<T}\|Y_t\left(\E_{z}\left(\left(\frac{1}{{Y}_{T}}\right)^{1/(p-1)}\big|\F_{t}\right)\right)^{p-1}\|_{\infty}
\leq \|w\|_{A_p(\text{heat},  \R^n)}, 
$$
as claimed. 
\end{proof}

As before, a remark similar to Remark \ref{remark1} applies.  With this lemma established, we can repeat the above argument for the space-time martingales and obtain similar results for 
$\|w\|_{A_p(\text{heat},  \R^n)}$ weights. 

\begin{theorem}\label{A_2(heat)}  Suppose $w\in A_2(\text{heat},  \R^n)$ and $f\in C_{0}(\R^n)$.  Then
\begin{equation}\label{mainnheatRn1}
||f||_{L_w^2(\R^n)}\leq \large\left(160\|w\|_{A_2(\text{heat},  \R^n)}\large\right)^{1/2}||G_*(f)||_{L_w^2(\R^n)},
\end{equation}
\begin{equation}\label{maininheatRn2}
||G_{*}(f)||_{L_w^2(\R^n)}\leq  \inf_{1<r<2}\left(\frac{r}{2-r}\|w\|_{A_2(\text{heat},  \R^n)}\right)^{1/2}\|f\|_{L_w^2(\R^n)}
\end{equation}
and 
\begin{equation}\label{maininheatRn3}
\|G_*(f)\|_{L_w^2(\R^n)}\leq 2^{7/4}\|w\|_{A_2(\text{heat},  \R^n)}\|f\|_{L_w^2(\R^n)}.
\end{equation}
\end{theorem}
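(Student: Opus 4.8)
The plan is to repeat, almost word for word, the proof of Theorem~\ref{A_2(Poisson-Rn)}, with the harmonic Poisson ingredients replaced by the parabolic heat ones assembled in \S\ref{heat-squarefRn}. Fix $f\in C_{0}(\R^n)$ with support contained in a ball $B=B(0,R)$, fix $T>0$ and $z\in\R^n$, let $(B_t)_{t\ge 0}$ be Brownian motion in $\R^n$ started at $z$, and consider on the interval $[0,T]$ the space-time martingale $M(f)_t=P_{T-t}f(B_t)$ together with the weight martingale $Y_t=P_{T-t}w(B_t)$. By Lemma~\ref{equiheat1}, $Y$ is an $A_2(\text{mart})$ weight with $\|Y\|_{A_2(\text{mart})}\le\|w\|_{A_2(\text{heat},\R^n)}$, so that Theorem~\ref{contmart} and Corollary~\ref{maincor1} apply to the pair $(M(f),Y)$ (after the harmless rescaling $Y\mapsto Y/P_Tw(z)$, which does not affect the $A_2$ characteristic). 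Recall that $M(f)_0=P_Tf(z)$, $M(f)_T=f(B_T)$, $\langle M(f)\rangle_T=|P_Tf(z)|^2+\int_0^T|\nabla_x P_{T-s}f(B_s)|^2\,ds$, that \eqref{Cond-Expect1} identifies the truncated square function $G_{*,T}$ with a conditional expectation of this quadratic variation, and that the elementary identity $\int_{\R^n}\E_z F(B_T)\,dz=\int_{\R^n}F$ holds for every $T>0$ by Fubini, since $\int_{\R^n}p_T(x-z)\,dz=1$; this identity plays here the role of \eqref{uniform}.

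To establish \eqref{mainnheatRn1}, write $\int_{\R^n}|f|^2w=\int_{\R^n}\E_z\!\big(\mathbf{1}_B(B_T)|f(B_T)|^2w(B_T)\big)\,dz$ and split $|f(B_T)|^2\le 2|f(B_T)-P_Tf(z)|^2+2|P_Tf(z)|^2$. To the first term I apply inequality \eqref{mainin3} of Theorem~\ref{contmart} (to the martingale $t\mapsto M(f)_t-P_Tf(z)$, which vanishes at $t=0$), bounding $2\int_{\R^n}\E_z(|f(B_T)-P_Tf(z)|^2w(B_T))\,dz$ by $160\,\|w\|_{A_2(\text{heat},\R^n)}\int_{\R^n}\E_z\!\big(\int_0^T|\nabla_x P_{T-s}f(B_s)|^2\,ds\cdot w(B_T)\big)\,dz$; conditioning on $B_T$, invoking \eqref{Cond-Expect1} and using the Fubini identity rewrites this integral as $160\,\|w\|_{A_2(\text{heat},\R^n)}\int_{\R^n}G_{*,T}^2(f)(x)\,w(x)\,dx$, which is at most $160\,\|w\|_{A_2(\text{heat},\R^n)}\int_{\R^n}G_*^2(f)(x)\,w(x)\,dx$. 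The second term equals $2\int_{\R^n}|P_Tf(z)|^2\,\E_z\!\big(\mathbf{1}_B(B_T)w(B_T)\big)\,dz\le 2\|p_T\|_\infty\|f\|_{L^1(\R^n)}\int_{\R^n}\E_z\!\big(\mathbf{1}_B(B_T)w(B_T)\big)\,dz=\tfrac{2}{(2\pi T)^{n/2}}\|f\|_{L^1(\R^n)}\int_B w$, again by the Fubini identity. Letting $T\to\infty$ kills the second term and yields \eqref{mainnheatRn1}.

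For \eqref{maininheatRn2} and \eqref{maininheatRn3}, I start instead from $\int_{\R^n}G_{*,T}^2(f)(x)w(x)\,dx$, which by \eqref{Cond-Expect1} and the Fubini identity equals $\int_{\R^n}\E_z\!\big(\int_0^T|\nabla_x P_{T-s}f(B_s)|^2\,ds\cdot w(B_T)\big)\,dz\le\int_{\R^n}\E_z\!\big(\langle M(f)\rangle_T\,w(B_T)\big)\,dz$. Applying inequality \eqref{mainin4} of Theorem~\ref{contmart} to $(M(f),Y)$, and using Lemma~\ref{equiheat1} with $p=r$ to bound the $A_r(\text{mart})$ characteristic of $Y$ by $\|w\|_{A_r(\text{heat},\R^n)}$, the last integral is at most $\big(\tfrac{r}{2-r}\|w\|_{A_r(\text{heat},\R^n)}\big)\int_{\R^n}\E_z(|f(B_T)|^2w(B_T))\,dz=\big(\tfrac{r}{2-r}\|w\|_{A_r(\text{heat},\R^n)}\big)\int_{\R^n}|f|^2w$ for every $r\in(1,2)$ for which $w\in A_r(\text{heat},\R^n)$. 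Letting $T\to\infty$ by monotone convergence and taking the infimum over such $r$ gives \eqref{maininheatRn2}; replacing the use of \eqref{mainin4} by Corollary~\ref{maincor1} gives \eqref{maininheatRn3} in exactly the same manner.

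The It\^o-formula and quadratic-variation bookkeeping underlying these steps is identical to the harmonic case already carried out in \S\ref{harm-squarefRn} and in the proof of Theorem~\ref{A_2(Poisson-Rn)}, so it is entirely routine. The one genuinely new point, and the place I expect to require the most care, is the vanishing of the boundary term $2\int_{\R^n}|P_Tf(z)|^2\,\E_z(\mathbf{1}_B(B_T)w(B_T))\,dz$ as $T\to\infty$: here the compact support of $f$ (so that $\|P_Tf\|_\infty\le(2\pi T)^{-n/2}\|f\|_{L^1(\R^n)}\to 0$) and the local integrability of $w$ (so that $\int_B w<\infty$) are exactly what is needed, and they play the roles that the decay $|K_af(z)|\lesssim a^{-n}\|f\|_{L^1(\R^n)}$ and the limit $a\to\infty$ played in the proof of Theorem~\ref{A_2(Poisson-Rn)}.
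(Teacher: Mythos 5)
Your proof is correct and follows exactly the approach the paper intends for Theorem \ref{A_2(heat)}: the paper itself does not spell this proof out, remarking only after Lemma \ref{equiheat1} that ``we can repeat the above argument for the space-time martingales,'' and you have carried out precisely that repetition, correctly substituting the space-time martingale $M(f)_t = P_{T-t}f(B_t)$, Lemma \ref{equiheat1} for Lemma \ref{equiPoisson1}, the heat-kernel Fubini identity for \eqref{uniform}, and the conditional-expectation formula \eqref{Cond-Expect1} for \eqref{g*harm1} — including the observation that the absence of the $\tfrac{1}{2}$ factor in \eqref{Cond-Expect1} is what changes $320$, $\tfrac{1}{\sqrt 2}$, $2^{5/4}$ in Theorem \ref{A_2(Poisson-Rn)} into $160$, $1$, $2^{7/4}$ here, which is exactly the remark the authors make after the statement. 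Two small points worth noting, neither of which affects correctness: your explicit mention of the rescaling $Y\mapsto Y/P_Tw(z)$ to meet the normalization $Y_0=1$ in \eqref{probAp} is a detail the paper leaves implicit, and your bound on the boundary term should read $|P_Tf(z)|^2\le \|p_T\|_\infty^2\|f\|_{L^1}^2$ (a slip that in fact mirrors the same imprecision in the paper's Poisson argument, and is harmless since the term still vanishes as $T\to\infty$).
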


We remark that the reason the constants here are slightly different than those for the Poisson case is that the representation for $G_{*, T}$ in terms of the conditional expectation of the corresponding martingale square function given in \eqref{Cond-Expect1} does not have the $\frac{1}{2}$ factor as in \eqref{g*harm1}.  

From the inequalities 
$\calP A_\alpha (f)(x)\leq (2\pi)^{n/4}e^{\alpha^2/4} G_{*}(f)(x)
$ and
$G(f)(x)\leq \sqrt{2}G_{*}(f)(x)$ 
 already proved in \eqref{pointwiseAbyG*} and \eqref{pointwiseGbyG*}, we obtain 

\begin{corollary}\label{heat-Cor1}  Suppose $w\in A_2(\text{heat},  \R^n)$ and $f\in C_{0}(\R^n)$.  Then 
\begin{equation}\label{maininheatRn4}
\|\calP A_\alpha (f)(x)\|_{L_w^2(\R^n)}\leq (2\pi)^{n/4}e^{\alpha^2/4} 2^{7/4}\|w\|_{A_2(\text{heat},  \R^n)}\|f\|_{L_w^2(\R^n)}
\end{equation}
and 
\begin{equation}\label{maininheatRn5}
\|G(f)(x)\|_{L_w^2(\R^n)}\leq  2^{9/4}\|w\|_{A_2(\text{heat},  \R^n)}\|f\|_{L_w^2(\R^n)}.  
\end{equation}
\end{corollary}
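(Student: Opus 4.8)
The plan is to obtain both estimates as immediate consequences of the weighted $L^2$ bound for $G_*$ recorded in \eqref{maininheatRn3} of Theorem \ref{A_2(heat)}, combined with the pointwise comparison inequalities already established in \S\ref{heat-squarefRn}. Concretely, I would recall from \eqref{pointwiseAbyG*} that $\calP A_\alpha(f)(x)\leq (2\pi)^{n/4}e^{\alpha^2/4}G_*(f)(x)$ for every $x\in\R^n$, and from \eqref{pointwiseGbyG*} that $G(f)(x)\leq \sqrt{2}\,G_*(f)(x)$ for every $x\in\R^n$.

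Next I would square each of these pointwise inequalities, multiply through by the weight $w(x)\geq 0$, and integrate over $\R^n$; since the weight is nonnegative this preserves the inequalities and yields $\|\calP A_\alpha(f)\|_{L_w^2(\R^n)}\leq (2\pi)^{n/4}e^{\alpha^2/4}\|G_*(f)\|_{L_w^2(\R^n)}$ and $\|G(f)\|_{L_w^2(\R^n)}\leq \sqrt{2}\,\|G_*(f)\|_{L_w^2(\R^n)}$, after taking square roots.

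Finally I would invoke \eqref{maininheatRn3}, which gives $\|G_*(f)\|_{L_w^2(\R^n)}\leq 2^{7/4}\|w\|_{A_2(\text{heat},\R^n)}\|f\|_{L_w^2(\R^n)}$ for $w\in A_2(\text{heat},\R^n)$ and $f\in C_0(\R^n)$. Substituting this into the two displays above and multiplying out the constants, using $\sqrt{2}\cdot 2^{7/4}=2^{9/4}$ for the $G$-estimate, produces exactly \eqref{maininheatRn4} and \eqref{maininheatRn5}. There is no genuine obstacle in this corollary: all the substantive work — the construction of the Bellman function, the It\^o-formula argument underlying Theorem \ref{contmart}, the identification of $w\in A_2(\text{heat},\R^n)$ with a martingale $A_2$ weight via Lemma \ref{equiheat1}, and the pointwise domination of $\calP A_\alpha(f)$ and $G(f)$ by $G_*(f)$ — has already been carried out, so the corollary is a short deduction once those ingredients are assembled. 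If anything, the only point warranting a word of care is that the parabolic estimate \eqref{maininheatRn3} is itself obtained by running the space-time martingale argument of Theorem \ref{A_2(Poisson-Rn)} with \eqref{Cond-Expect1} and Lemma \ref{equiheat1} replacing \eqref{g*harm1} and Lemma \ref{equiPoisson1} (which explains the absence of the factor $\tfrac12$ and hence the slightly different numerical constants here compared with the Poisson case).
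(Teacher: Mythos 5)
Your argument is correct and coincides exactly with the paper's own deduction: the corollary follows by combining the pointwise dominations $\calP A_\alpha(f)\leq (2\pi)^{n/4}e^{\alpha^2/4}G_*(f)$ and $G(f)\leq\sqrt 2\,G_*(f)$ from \eqref{pointwiseAbyG*} and \eqref{pointwiseGbyG*} with the weighted bound \eqref{maininheatRn3} for $G_*$. The constant bookkeeping $\sqrt 2\cdot 2^{7/4}=2^{9/4}$ is right, so nothing further is needed.
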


Theorem \ref{A_2(heat)} and Corollary \ref{heat-Cor1}, combined with the Petermichl--Volberg inequality \eqref{PetVol} proving the equivalence of the classical Muckenhoupt $A_p$ and $A_p(\text{heat})$, give

\begin{corollary}\label{heat-Cor2}  Suppose $w\in A_2$ is in the classical Muckenhoupt  class and $f\in C_{0}(\R^n)$.  Then
\begin{equation}\label{classical1}
||f||_{L_w^2(\R^n)}\leq \large\left(\frac{160\|w\|_{A_2}}{a}\large\right)^{1/2}||G_*(f)||_{L_w^2(\R^n)},
\end{equation}
\begin{equation}\label{classical2}
||G_{*}(f)||_{L_w^2(\R^n)}\leq  \inf_{1<r<2}\left(\frac{r}{2-r}\frac{\|w\|_{A_2}}{a}\right)^{1/2}\|f\|_{L_w^2(\R^n)}, 
\end{equation}
\begin{equation}\label{classical3}
\|G_*(f)\|_{L_w^2(\R^n)}\leq \frac{2^{7/4}\|w\|_{A_2}}{a}\|f\|_{L_w^2(\R^n)}, 
\end{equation}
\begin{equation}\label{classical4}
\|\calP A_\alpha (f)(x)\|_{L_w^2(\R^n)}\leq \frac{(2\pi)^{n/4}e^{\alpha^2/4} 2^{7/4}\|w\|_{A_2}}{a}\|f\|_{L_w^2(\R^n)} 
\end{equation}
and 
\begin{equation}\label{classical5}
\|G(f)(x)\|_{L_w^2(\R^n)}\leq  \frac{2^{9/4}\|w\|_{A_2}}{a}\|f\|_{L_w^2(\R^n)},   
\end{equation}
where $a$ is the constant in \eqref{PetVol}. 
\end{corollary}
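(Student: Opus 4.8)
The plan is to obtain all five estimates as immediate consequences of Theorem~\ref{A_2(heat)} and Corollary~\ref{heat-Cor1}, the only new ingredient being the Petermichl--Volberg comparison \eqref{PetVol}. First I would read off from the left-hand inequality in \eqref{PetVol}, namely $a\,\|w\|_{A_2(\text{heat},\R^n)}\le \|w\|_{A_2}$, that any weight $w$ in the classical Muckenhoupt class $A_2$ automatically belongs to $A_2(\text{heat},\R^n)$, with the quantitative bound
$$\|w\|_{A_2(\text{heat},\R^n)}\le \frac{\|w\|_{A_2}}{a}.$$
In particular the hypothesis ``$w\in A_2(\text{heat},\R^n)$'' of Theorem~\ref{A_2(heat)} and Corollary~\ref{heat-Cor1} is fulfilled (the hypothesis $f\in C_0(\R^n)$ being carried over unchanged), so those results apply verbatim to $w$.

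Next I would substitute the displayed bound into the right-hand sides of \eqref{mainnheatRn1}, \eqref{maininheatRn2}, \eqref{maininheatRn3}, \eqref{maininheatRn4} and \eqref{maininheatRn5}. In each of these inequalities the left-hand side is untouched by the substitution, while the multiplicative constant depends on $\|w\|_{A_2(\text{heat},\R^n)}$ in a manifestly nondecreasing way: it enters only through a factor of the form $\bigl(c\,\|w\|_{A_2(\text{heat},\R^n)}\bigr)^{1/2}$ or $c\,\|w\|_{A_2(\text{heat},\R^n)}$, and in \eqref{maininheatRn2} it sits inside the infimum over $r\in(1,2)$ in the same monotone fashion. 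Replacing $\|w\|_{A_2(\text{heat},\R^n)}$ by the larger quantity $\|w\|_{A_2}/a$ therefore only enlarges the constant, and \eqref{classical1}--\eqref{classical5} follow line by line from \eqref{mainnheatRn1}--\eqref{maininheatRn5}, respectively.

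There is no genuine obstacle here: this corollary is a bookkeeping statement translating the heat-semigroup estimates of \S\ref{L-P} into the language of the classical $A_2$ class via \eqref{PetVol}, and all the analytic content has already been established in \S\ref{mart-wights} and \S\ref{L-P}. The only point worth stating explicitly rather than leaving implicit is the legitimacy of the substitution, which rests solely on the monotonicity of each constant in the $A_2(\text{heat},\R^n)$ characteristic noted above.
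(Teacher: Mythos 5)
Your proposal is correct and reproduces precisely the argument the paper intends: the paper itself gives no separate proof of Corollary~\ref{heat-Cor2}, stating only that it follows from Theorem~\ref{A_2(heat)} and Corollary~\ref{heat-Cor1} ``combined with the Petermichl--Volberg inequality \eqref{PetVol},'' and you have simply written out that deduction — reading off $\|w\|_{A_2(\text{heat},\R^n)}\le\|w\|_{A_2}/a$ from the left inequality in \eqref{PetVol} and substituting it into \eqref{mainnheatRn1}--\eqref{maininheatRn5}, using the monotone dependence of each constant on the heat characteristic. The explicit remark about monotonicity is a small but genuine gap-closer that the paper leaves implicit; otherwise the two arguments coincide.
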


\section{symmetric Markovian semigroups}\label{Markovian} 
In \cite{Var1}, Varopoulos  defines the $g_{*}$ function in the general setting of Poisson semigroups.    However, due to the lack of gradient in this general setting, he only considers the time derivative of the  semigroup in the definition of his square functions, for both his $g$ and $g_{*}$.  This construction can be applied to obtain versions of the above inequalities for semigroups which yield martingales with continuous paths. In this section we aim to define a  Littlewood-Paley function $G_{*}$ for general Markovian and the corresponding $A_p$ weights.  Since our martingale results require continuous  trajectories, our $A_2$ inequality will only be stated for Riemannian manifolds of non-negative Ricci curvature,   using Meyer's ``carre du champ." 

Let $\trip$ be a metric measure space.  That is,  a  measure space (equipped with a countably generated $\sigma$-algebra) $\CM$ which is also a metric space with metric $\rho$. The measure $\mu$ is assumed to be $\sigma$-finite.   
Let $(P_{t}, t \geq 0)$ be a family of Markovian linear operators which acts as a $C_{0}$-contraction semigroup on $L^{p}(M)$ for all $1 \leq p \leq \infty$. 
We further assume that $P_{t}$ is self-adjoint on $L^{2}(M)$ for all $t \geq 0$ and that it is given by an integral kernel 
$$
P_tf(x)=\int_{M}p_t(x, y) f(y) d\mu(y)
$$ 
which is symmetric.  That is, $p_t(x, y)=p_t(y, x)$ and 
 $$
 \int_{M}p_t(x, y)d\mu(y)=1. 
 $$
 It follows from \cite{Davie} that $T_t=e^{-t\CL}$ where $\CL$ is a positive self-adjoint operator on $L^2(M)$. If we denote by $D(\CL)\subset L^2(M)$ the domain of $\CL$ 
the for $f, h\in D(\CL)$, define the operator ``carr\'e du champ" is defined by
\begin{equation}\label{carre}
\Gamma(f, h)=\CL(fh)-f\CL h-h\CL f.
\end{equation}
By the definition  of $\Gamma$, we have 
\begin{eqnarray*}
\int_{0}^{\infty}\int_{M}\Gamma(P_tf, P_tf)(x)d\mu(x)dt&=&-2\int_{0}^{\infty}\int_{M}P_tf(x)\CL P_tf(x)d\mu(x) dt\\
&=&-2\int_{0}^{\infty}\int_{M}P_tf(x)\frac{d}{dt}P_tf(x)  d\mu(x)dt\\
&=&-\int_{0}^{\infty}\int_{M}\frac{d}{dt}(P_tf(x))^2 d\mu(x) dt\\
&=& \int_{M}|f(x)|^2d\mu(x).
\end{eqnarray*}
%Note: Below we shall always work with functions 
%$f$ in a dense subset of $D(\CL)$ even if we just write  $f\in L^p(M)$ for some $p$.  
Defining, respectively,  the Littlwood-Paley $G$ and $G_{*}$ by 
\begin{equation}\label{G-general1}
G(f)(x)=\left(\int_0^{\infty} \Gamma(P_tf, P_tf)(x)\,dt\right)^{1/2} 
\end{equation}
and 
\begin{equation}\label{G*-genral1}
G_{*}(f)(x)=\left(\int_0^{\infty}\int_{M}\Gamma(P_tf, P_tf)(x)p_t(x, z)d\mu(z)\, dt\right)^{1/2},
\end{equation}
 we see that 
$$\|G_{*}f\|_2=\|G(f)\|_2=\|f\|_2.
$$
We now denote by $(X_t)_{t\geq 0}$ the Markov process associated with this semigroup so that $P_t f(x)=\E_{x}[f(X_t)]$, and consider the martingale  
%Under the above assumptions (same as Varopoulos \cite{Var1}, this processes has {\cadlag} paths.)  
 \begin{equation}\label{manifoldmart}
 M_t(f)=P_{T-t}f(X_t),\,\,\,\, 0\leq t\leq T. 
 \end{equation} 
Under quite general conditions on the Markovian semigroup (as those imposed on \cite{Var1}), the process $(X_t)_{t\geq 0}$ has {\cadlag} paths, $G_{*}$ is the conditional expectation of the square function for this martingale (see
Bakry Emery \cite[p.~181]{BakEme1} or Revuz and Yor, \cite[p. 326]{RevYor1}) and it follows from the Burkholder-Gundy  inequalities that $\|G_{*}f(x)\|_p\leq C_p\|f\|_p$, $2\leq p<\infty$, where $C_p$ depends only on $p$.  

For the remaining of this paper we will make the further assumption that our Markovian semigroup corresponds to Brownian motion on a complete Riemannian manifold of non-negative Ricci curvature and therefore the process has continuous paths.  To be precise, we let $M$ be a complete Riemannian manifold  of dimension $n$ with non-negative Ricci curvature. Let $\Delta$ be the Laplace-Beltrami operator and $\mu$ be the Riemannian volume measure. Then the heat equation $\frac{\partial u}{\partial t} = \Delta u(t)$  has a fundamental solution $p \in C^{\infty}((0, \infty) \times M \times M)$ which we call the {\it heat kernel} and this gives the kernel generating our semigroup $(P_t)_{t\geq 0}$ above. 
The following heat kernel bounds of Li and Yau \cite{LY} are important for many applications.
For all $t > 0, x,z \in M$:
\begin{equation} \label{hk}
\frac{C_1}{V(x,\sqrt{t})}\exp\left(-\frac{\rho(x,y)^{2}}{c_1t}\right)\leq p_{t}(x,y)\leq \frac{C_2}{V(x,\sqrt{t})}\exp\left(-\frac{\rho(x,y)^{2}}{c_3t}\right),
\end{equation}
where $\rho$ is the Riemannian metric and for $r > 0, V(x,r)=\mu(B(x, r))$ is the volume of the ball $B(x, r)$ of radius  $r$ centered at $x$.
It is also well-known (cf. \cite{BC}) that for all  $x \in M$,
\begin{equation}\label{volumeright}
V(x,r) \leq v(n)r^{n},
\end{equation} 
where $v(n)$ is the volume of the unit ball in $\R^{d}$.  

With the Laplacian as the generator, the carr\'e du champ has the familiar form 
$$\Gamma(P_tf, P_tf)(x)=|\nabla P_tf(x)|^2$$
and the square function of the martingale $M_f(f)$ is given by 
$$
\langle M(f)\rangle_t=|P_Tf(x)|^2+ \int_0^t|\nabla_x P_{T-s}f(X_s)|^2\,ds,\qquad t\geq 0.  
$$ 
With this, the exact same argument as in $\R^n$ gives that 
\begin{eqnarray}\label{manifoldG*}
G_{*, T}(f)(x)&=&\int_0^T \int_{\R^d} |\nabla_x P_{t}f(z)|^2 p_t(x, z)d\mu(z)\,dt\nonumber\\
&=& \int_{\R^n} \E_{z}^{x}\left(\int_{0}^{T}|\nabla_x P_{T-s}f(X_s)|^2\,ds\right)p_{T}(x, z)d\mu(z).
\end{eqnarray}
As before we have the pointwise inequality 
\begin{equation}\label{domGbyG*-manifold} 
G(f)(x)\leq \sqrt{2} G_{*}(f)(x).
\end{equation} 
To prove this we recall that under the assumption of non-negative Ricci curvature, the  ``Bakry $\Gamma_2\geq 0$" holds.  That is, we have the inequality $\Gamma(P_tf, P_tf)\leq P_t\Gamma(f, f)$ (see \cite{Bak1} for details).  From this and the semigroup property, we obtain 
\begin{eqnarray*}
\Gamma(P_tf, P_tf)(x)&=&\Gamma(P_{t/2}T_{t/2}f, P_{t/2}P_{t/2}f)(x)\\
&\leq& P_{t/2}\Gamma(T_{t/2}f, P_{t/2}f)(x)\\
&=&\int_{M}\Gamma(T_{t/2}f, P_{t/2}f)(y) p_{t/2}(x, y) d\mu(y). 
\end{eqnarray*}
Integrating both sides of this inequality in $t$ gives \eqref{domGbyG*-manifold}. 

Next, we introduce the parabolic cone using the metric on the manifold by 
 $$
\calP_\alpha(x)=\{(z,t)\in \R_+^{n+1}:d(x, z) < \alpha \sqrt{t}\} 
$$ 
and define the Lusin area integral by 
\begin{equation}\label{Lus-heat-M}
\calP A_\alpha (f)(x)=\left( \int_{\calP_\alpha(x)} t^{-{n/2}}|\nabla P_{t}f(z)|^2 d\mu(z)
dt\right)^{1/2}.
\end{equation} 
By \eqref{hk} and \eqref{volumeright} we have 
\begin{equation}\label{dominAbyG*-M}
\calP A_\alpha (f)(x)\leq \sqrt{\frac{v(n)e^{\frac{\alpha^2}{2c_1}}}{C_1}}G_{*}(f)(x).
\end{equation} 

Given a positive and $\mu$-locally integrable function $w$ on $M$, we will write $w\in A_p(heat, M)$ 
%if \eqref{A_p-heat}
 if
\begin{equation}\label{A_p-heat-M}
\|w\|_{A_p(\text{heat}, M)}:=\|P_tw(x) \left(P_t({w^{-1/(p-1}})(x)\right)^{p-1}\|_{L^{\infty}(M\times (0, \infty))}<\infty. 
\end{equation}
The same argument as that in Lemma \ref{equiheat1} shows that $$\|Y_T\|_{A_{p}(\text{mart})}\leq \|w\|_{A_p(\text{heat}, M)},$$
where $Y$ stands for the martingale  $Y_t=P_{T-t}w(X_t)$, $0\leq t\leq T$.

 Similarly, we say that $w\in A_p(M)$ (the classical Muckenhoupt $A_p$-class)  if 
 
$$ \|w\|_{A_p}(M)=\sup_B \left(\frac{1}{\mu(B)}\int_B w(z)d\mu(z) \right)\left(\frac{1}{\mu(B)}\int_B w(z)^{-1/(p-1)} d\mu(z)\right)^{p-1}<\infty,$$
where the supremum is taking over balls.  
 Because of the bound on the heat kernel $p_t(x, z)$ given in \eqref{hk}, the observations of Remark \ref{A_pbyApoisson/heat} show   that 
 $$A_p(M)\leq aA_p(heat, M),$$ for some constant $a$ depending on $c_1, C_1$.

 With the above definitions in place, we now state the following version of Theorem \ref{A_2(heat)}, whose proof is exactly the same as the proof of that theorem.   
 
 \begin{theorem}\label{heat-M} Let $M$ be a complete Riemannian manifold of non-negative Ricci curvature.  Assume further that (*) $\sup_{x\in M}p_t(x, x)=c_t\to 0$, as $t\to\infty$, holds true.  Suppose $w\in A_2(heat, M)$ and $f\in C_0(M)$.  Then inequalities \eqref{mainnheatRn1}, \eqref{maininheatRn2}, \eqref{maininheatRn3}, \eqref{maininheatRn4} and \eqref{maininheatRn5} hold for the functions $G(f)$, $G_{*}(f)$ and $\calP A_\alpha (f)$ as defined in \eqref{G-general1}, \eqref{G*-genral1} and \eqref{Lus-heat-M}.  
 \end{theorem}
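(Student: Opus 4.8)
The plan is to transcribe the proof of Theorem~\ref{A_2(heat)} almost verbatim, with the Gaussian heat semigroup on $\R^n$ replaced by the heat semigroup of $M$; every structural ingredient of that proof has an exact analogue here, and all of them are already recorded in the preceding discussion. First I would isolate the four facts that make this possible. (i)~The Markov process $(X_t)$ attached to $(P_t)$ has continuous paths, since $M$ has non-negative Ricci curvature; hence, for fixed $0<T<\infty$ and $z\in M$, the space-time process $M(f)_t=P_{T-t}f(X_t)$, $0\le t\le T$, is a continuous martingale with bracket $\langle M(f)\rangle_t=|P_Tf(z)|^2+\int_0^t|\nabla_x P_{T-s}f(X_s)|^2\,ds$, to which It\^o's formula and Theorem~\ref{contmart} apply. (ii)~The Littlewood--Paley function is recovered as a conditional expectation via \eqref{manifoldG*}. (iii)~The weight $Y_t=P_{T-t}w(X_t)$ is a non-negative continuous martingale with $\|Y_T\|_{A_2(\text{mart})}\le\|w\|_{A_2(\text{heat},M)}$, exactly by the computation of Lemma~\ref{equiheat1}. (iv)~For non-negative $F$ one has $\int_M\E_z F(X_T)\,d\mu(z)=\int_M F\,d\mu$, because $p_T$ is symmetric and stochastic; this converts $\mu$-integrals into expectations, just as \eqref{uniform} did in $\R^n$.

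With these in hand, for \eqref{mainnheatRn1} I would fix $f\in C_0(M)$, pick a geodesic ball $B\supset\mathrm{supp}\,f$ (available since $M$ is complete and connected), and a time $T>0$. Starting from $\int_M|f|^2w\,d\mu=\int_M\E_z(\mathbf{1}_B(X_T)|f(X_T)|^2w(X_T))\,d\mu(z)$, I split $\mathbf{1}_B(X_T)|f(X_T)|^2\le 2|f(X_T)-P_Tf(z)|^2+2\,\mathbf{1}_B(X_T)|P_Tf(z)|^2$, apply \eqref{mainin3} of Theorem~\ref{contmart} to the centred martingale $M(f)-P_Tf(z)$ (whose bracket is $\int_0^t|\nabla_xP_{T-s}f(X_s)|^2\,ds$) using fact~(iii), integrate in $z$, and use \eqref{manifoldG*} together with the symmetry of $p_T$ to identify the result with the same constant times $\int_M G_{*,T}^2(f)\,w\,d\mu\le\int_M G_*^2(f)\,w\,d\mu$. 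This reproduces, word for word, the chain \eqref{term1}--\eqref{term3} of the proof of Theorem~\ref{A_2(Poisson-Rn)} (with the factor $\tfrac12$ of \eqref{g*harm1} absent, as in the heat case). The estimates \eqref{maininheatRn2} and \eqref{maininheatRn3} are obtained in the same way from \eqref{mainin4} and Corollary~\ref{maincor1}, bounding $\int_0^T|\nabla_xP_{T-s}f(X_s)|^2\,ds\le\langle M(f)\rangle_T$ and letting $T\to\infty$; and \eqref{maininheatRn4}--\eqref{maininheatRn5} follow at once from \eqref{maininheatRn3} and the pointwise dominations \eqref{dominAbyG*-M} and \eqref{domGbyG*-manifold}, which were already established on $M$ from the Li--Yau bounds \eqref{hk}, the volume bound \eqref{volumeright}, and Bakry's $\Gamma_2\ge 0$ inequality.

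The one place where the manifold setting genuinely differs from $\R^n$, and where hypothesis~(*) is used, is the treatment of the boundary term $2\int_M|P_Tf(z)|^2\E_z(\mathbf{1}_B(X_T)w(X_T))\,d\mu(z)$ before letting $T\to\infty$. Here I would first upgrade (*) to a uniform off-diagonal bound: by the semigroup property, symmetry and Cauchy--Schwarz, $p_T(x,y)\le p_T(x,x)^{1/2}p_T(y,y)^{1/2}\le c_T$ for all $x,y\in M$, whence $|P_Tf(z)|\le c_T\|f\|_{L^1(\mu)}$. Combining this with $\int_M\E_z(\mathbf{1}_B(X_T)w(X_T))\,d\mu(z)=\int_B w\,d\mu<\infty$ (from fact~(iv) and local integrability of $w$ on the relatively compact set $B$), the boundary term is at most $2c_T^2\|f\|_{L^1(\mu)}^2\int_B w\,d\mu$, which tends to $0$ as $T\to\infty$ precisely because $c_T\to 0$. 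Since $G_{*,T}^2(f)\uparrow G_*^2(f)$, monotone convergence then yields \eqref{mainnheatRn1}, and similarly for the other estimates.

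I expect this last truncation-and-limit step to be the only real obstacle: in $\R^n$ the decay of the boundary term was free (the Gaussian kernel is bounded by $(2\pi T)^{-n/2}\to 0$), and (*) is exactly the hypothesis needed to recover it on $M$. Apart from that, the proof is a line-by-line copy of the proof of Theorem~\ref{A_2(heat)}; the only other routine checks are that $f\in C_0(M)$ together with local integrability of $w$ gives $f\in L^2_w(M)$ and keeps every integral finite, and that the heat-kernel regularity for $M$ makes $P_Tf$ smooth, bounded, with bounded gradient on $[\varepsilon,T]\times M$, so that It\^o's formula and the $L^2$-boundedness hypotheses of Theorem~\ref{contmart} are legitimately available.
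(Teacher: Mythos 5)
Your proposal is correct and follows the paper's approach, which is stated only as ``exactly the same as the proof of Theorem~\ref{A_2(heat)}'' (itself a transcription of Theorem~\ref{A_2(Poisson-Rn)}); all the ingredients you list (continuity of paths from non-negative Ricci curvature, the representation \eqref{manifoldG*}, Lemma~\ref{equiheat1}, and the $\mu$-to-expectation conversion) are the ones the paper relies on. Your explicit identification of hypothesis~(*) --- via the Cauchy--Schwarz bound $p_T(x,y)\le p_T(x,x)^{1/2}p_T(y,y)^{1/2}\le c_T$ --- as the precise substitute for the $a^{-n}\to 0$ decay of the Gaussian/Poisson kernels in killing the boundary term is a correct and worthwhile clarification of a point the paper leaves implicit.
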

 
 \begin{remark}  If in addition we assume that $V(x, r)\geq c_nr^n$, then  (*) is automatically satisfied.  For various known conditions that guarantee this lower bound volume growth, see \cite[p.~255]{Var2}) and \cite{AusCouDuoHof}. 
  \end{remark}

\section*{Acknowledgment} The research was initiated during the fall semester of 2013 when the second-named author visited Purdue University.

\end{document}